\newtheorem{theorem}{Theorem}[section]
\newtheorem{corollary}[theorem]{Corollary}
\newtheorem{proposition}[theorem]{Proposition}
\theoremstyle{definition}
\newtheorem{definition}[theorem]{Definition}
\theoremstyle{remark}
\numberwithin{figure}{section}
\numberwithin{table}{section}
\begin{document}

\title[$m-$homotopic Distances in Digital Images]{$m-$homotopic Distances in Digital Images}

\author{MEL\.{I}H \.{I}S}
\date{\today}

\address{\textsc{Melih İs,}
Ege University\\
Faculty of Sciences\\
Department of Mathematics\\
Izmir, Turkiye}
\email{melih.is@ege.edu.tr}

\author{\.{I}SMET KARACA}
\date{\today}

\address{\textsc{Ismet Karaca,}
	Ege University\\
	Faculty of Sciences\\
	Department of Mathematics\\
	Izmir, Turkiye}
\email{ismet.karaca@ege.edu.tr}

\subjclass[2010]{55M30, 68U10, 55P10}

\keywords{$m-$homotopic distance, digital topological complexity, digital LS-category}

\begin{abstract}
    We define digital $m-$homotopic distance and its higher version. We also mention related notions such as $m-$category in the sense of Lusternik-Schnirelmann and $m-$complexity in topological robotics. Later, we examine the homotopy invariance or $m-$homotopy invariance property of these concepts.     
\end{abstract}

\maketitle

\section{Introduction}
\label{intro}

\quad One of the traditional methods for topological complexity calculation is homotopic distance, denoted by D$(,)$ \cite{VirgosLois:2022}. This notion has also been examined in digital images in recent years \cite{Borat:2021}. The digital version of D$(,)$ is not only used to express digital topological complexity (denoted by TC \cite{Farber:2003,Pavesic:2019}) of digital images or digital fibrations, but also generalizes the digital LS-category (denoted by cat \cite{CorneaLupOpTan:2003}) of digital images or maps \cite{BoratVergili:2018,BoratVergili:2020}. In \cite{VirgosLoisOprea:2023}, D$(,)$ is improved upon D$_{m}(,)$ in topological spaces. In other words, the lower bound D$_{m}(,)$ of D$(,)$ is introduced to interpret TC in terms of a new TC-related number TC$^{m}$ (called $m-$topological complexity). In addition, this also applies to cat and cat$_m$ (called $m-$category), namely that, cat cannot be greater than cat$_{m}$. 

\quad A generalized version called higher homotopic distance, denoted by D$(,...,)$, defines the higher topological complexity TC$_{n}$ in topological spaces\cite{Rudyak:2010,BoratVergili:2021}. These idea works even in simplicial complexes \cite{alaborciherdal:2024,MelihKaraca:2024}. Therefore, transferring this investigation to digital images is an important step for future motion planning studies in image processing. The relationship between $m$-homotopic distance and digital topology is rooted in the adaptation of continuous topological concepts to discrete spaces, such as digital images. The $m$-homotopic distance D$_{m}(,)$, a generalization of the conventional homotopy concept, offers a means of quantifying the dissimilarity between functions up to $m$-homotopy in continuous topology. The topological structures and attributes that emerge in digital spaces-such as those seen in computer graphics and image processing-are the subject of digital topology \cite{Ros:1979}. Taking into account the discrete structure of these spaces, the objective of applying the notion of $m-$homotopic distance to digital topology is to quantify the ''far apart'' between two digital objects (e.g., digital images). Since digital topology and $m$-homotopic distance are related in terms of application and adaption, through the application of $m$-homotopy to digital spaces, one can create new instruments and methods for examining and comprehending the topological characteristics of digital objects.

\quad The organization of the paper is given as follows. Section \ref{sec:1} presents important facts about digital topology. For instance, an adjacency relation used instead of a topological space is explained. Section \ref{sec:2} consists of two main parts. The first deals with the digital version of the notion of $m-$homotopic distance. Some results based on digital $m$-homotopic distance are also revealed. For example, D$_{m}(,)$ is a lower bound for D$(,)$. The main goal of the second part is to examine D$_{m}(,)-$related notions such as cat$_{m}$ and TC$^{m}$ in digital images. It is shown that D$_{m}(,)$ is less than or equal to all given D$_{m}(,)-$related notions. Section \ref{sec:3} is dedicated to improving the previous section by describing the generalized (or higher) versions of the given concepts. The digital setting of the higher $m-$homotopic distance with its very first results is introduced, and then, $n-$TC$^{m}$ is studied in this section. The last part of the section is interested in defining a digital fiber $m-$homotopy equivalence. Finally, it is proven that $n-$TC$^{m}$ is digitally fiber $m-$homotopy invariant.   

\section{Preliminaries}
\label{sec:1}

\quad In this section, we try to summarize the unique structure of digital topology through well-known and important definitions and results.

\quad An adjacency relation, rather than a topology, characterizes a \textit{digital image}, which is a pair operating on discrete structures composed of a subset $A$ of $\mathbb{Z}^{r}$ and a given adjacency $c_{p}$ over the points of $A$ \cite{Kong:1989}. The pair $(A,c_{p})$ is commonly used to represent it, and the adjacency relation $c_{p}$ has the following definition \cite{Kong:1989}: Let $a = (a_{1},\cdots,a_{r})$, $a^{'} = (a^{'}_{1},\cdots,a^{'}_{r})$ be any points of $\mathbb{Z}^{r}$ with $a \neq a^{'}$. Assume that $p$ is an integer such that $1 \leq p \leq r$. Then $a$ is called \textit{$c_{p}-$adjacent to $a^{'}$} if the following hold:
\begin{itemize}
	\item There exist the maximum number of $p$ indices $q \in \{1,\cdots,r\}$ for which $|a_{q}-a^{'}_{q}| = 1$.
	\item $a_{m} = a^{'}_{m}$ for any indices $m \in \{1,\cdots,r\}$ with $|a_{m}-a^{'}_{m}| \neq 1$.
\end{itemize}
For example, there is only one adjacency on $\mathbb{Z}$, and this adjacency is denoted by $c_{1}$, $2$, or $c_{1} = 2$. For $\mathbb{Z}^{2}$ and $\mathbb{Z}^{3}$, all possibilities are $c_{1} = 4$, $c_{2} = 8$ and $c_{1} = 6$, $c_{2} = 18$, $c_{3} = 26$, respectively. Note that the notation $a \leftrightarrow_{c_{p}} a^{'}$ is generally used to express that $a$ is $c_{p}-$adjacent to $a^{'}$. Furthermore, $a \leftrightarroweq_{c_{p}} a^{'}$ means that $a = a^{'} \ \ \vee \ \ a \leftrightarrow_{c_{p}} a^{'}$. 

\begin{definition}\cite{Boxer:2018}
	Let $(A,\kappa_{1})$ and $(A,\lambda_{1})$ be any digital images. Then $\kappa_{1} \geq_{d} \lambda_{1}$ provided that $$a \leftrightarrow_{\kappa_{1}} a^{'} \ \ \Rightarrow \ \ a \leftrightarrow_{\lambda_{1}} a^{'}$$ for any $a$, $a^{'}$ in $A$.
\end{definition}

\quad The notation $\kappa_{1} \geq_{d} \lambda_{1}$ is read as $\kappa_{1}$ \textit{dominates} $\lambda_{1}$.

\begin{proposition}\cite{Boxer:2018}
	\textbf{i)} Let $h : (A,\kappa) \rightarrow (A^{'},\lambda_{1})$ be a digitally $(\kappa,\lambda_{1})-$continuous function. Then $h : (A,\kappa) \rightarrow (A^{'},\lambda^{'}_{1})$ is digitally $(\kappa,\lambda^{'}_{1})-$continuous if $\lambda_{1} \geq_{d} \lambda^{'}_{1}$.
	
	\textbf{ii)} Let $h : (A,\kappa_{1}) \rightarrow (A^{'},\lambda)$ be a digitally $(\kappa_{1},\lambda)-$continuous function. Then $h : (A,\kappa^{'}_{1}) \rightarrow (A^{'},\lambda)$ is digitally $(\kappa^{'}_{1},\lambda)-$continuous if $\kappa^{'}_{1} \geq_{d} \kappa_{1}$.
\end{proposition}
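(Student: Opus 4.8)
The plan is to unwind the definition of digital continuity and simply propagate the domination hypothesis through it. Recall that a function $f:(X,\mu)\rightarrow (Y,\nu)$ is digitally $(\mu,\nu)-$continuous precisely when $x\leftrightarrow_{\mu}x'$ implies $f(x)\leftrightarroweq_{\nu}f(x')$ for all $x,x'\in X$. So for each part I only have to verify this single implication for the new adjacency pair, and the work reduces to a short case analysis driven by the definition of $\geq_{d}$.

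For \textbf{i)} I would fix $a,a'\in A$ with $a\leftrightarrow_{\kappa}a'$ and use that $h$ is $(\kappa,\lambda_{1})-$continuous to get $h(a)\leftrightarroweq_{\lambda_{1}}h(a')$. Now I split according to the two alternatives encoded in $\leftrightarroweq_{\lambda_{1}}$: if $h(a)=h(a')$, then trivially $h(a)\leftrightarroweq_{\lambda'_{1}}h(a')$; if instead $h(a)\leftrightarrow_{\lambda_{1}}h(a')$, then the hypothesis $\lambda_{1}\geq_{d}\lambda'_{1}$ yields $h(a)\leftrightarrow_{\lambda'_{1}}h(a')$, hence again $h(a)\leftrightarroweq_{\lambda'_{1}}h(a')$. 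In either case the required implication holds, so $h:(A,\kappa)\rightarrow(A',\lambda'_{1})$ is $(\kappa,\lambda'_{1})-$continuous.

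For \textbf{ii)} the argument is even shorter and operates on the source side: given $a\leftrightarrow_{\kappa'_{1}}a'$, the hypothesis $\kappa'_{1}\geq_{d}\kappa_{1}$ immediately gives $a\leftrightarrow_{\kappa_{1}}a'$, and then $(\kappa_{1},\lambda)-$continuity of $h$ gives $h(a)\leftrightarroweq_{\lambda}h(a')$, which is exactly the condition for $(\kappa'_{1},\lambda)-$continuity. No case split is needed here because $\geq_{d}$ is applied to a genuine adjacency, not to the output side.

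I do not expect a genuine obstacle: both parts are pure definition-chasing, and the only mild subtlety is to remember that $\leftrightarroweq$ permits equality, so in part \textbf{i)} the case $h(a)=h(a')$ must be disposed of before invoking $\lambda_{1}\geq_{d}\lambda'_{1}$, which is a statement about honest adjacencies only. It is worth observing that the two parts are formally dual — weakening the target adjacency (part \textbf{i)}) or strengthening the source adjacency (part \textbf{ii)}) can only make continuity easier — which is the conceptual content behind the routine verification.
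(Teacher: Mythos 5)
Your argument is correct: both parts follow by unwinding the definition of digital continuity and applying the definition of $\geq_{d}$ on the target side (with the harmless case split for equality in part \textbf{i)}) or the source side (part \textbf{ii)}). The paper states this proposition as a cited result from Boxer's work and gives no proof of its own, so there is nothing to compare against; your definition-chasing verification is the standard argument and is complete.
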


\quad The notion of continuity is translated into digital images but expressed through the adjacency relation, independent of the concepts of topology and openness. Mathematically, a function $h : (A,\kappa_{1}) \rightarrow (A^{'},\lambda_{1})$ is called \textit{(digitally) $(\kappa_{1},\lambda_{1})-$continuous} \cite{Boxer:1999} provided that for any $a$, $a^{'} \in A$, $$a \leftrightarrow_{c_{p}} a^{'} \ \ \Rightarrow \ \ h(a) \leftrightarroweq_{c_{p}} h(a^{'}).$$ Similar to topological spaces, when one considers two digitally continuous maps $h$ and $h^{'}$, then $h \circ h^{'}$ and $h^{'} \circ h$ are also digitally continuous maps \cite{Boxer:1999}. A map $h : (A,\kappa_{1}) \rightarrow (A^{'},\lambda_{1})$ is said to be a \textit{digital $(\kappa_{1},\lambda_{1})-$isomorphism} if each of the following satisfies \cite{Boxer2:2006,Han:2005}:
\begin{itemize}
	\item $h$ is bijective.
	\item $h$ and $h^{-1}$ is digitally continuous.
\end{itemize} 

\quad To state a digital homotopy, we need to define a digital interval and an adjacency relation on the cartesian product for the domain of the homotopy. A digital interval \cite{Boxer:2006} is given by
\begin{eqnarray*}
	[a,a^{'}]_{\mathbb{Z}} = \{b \in \mathbb{Z} : a \leq b \leq a^{'}\}.
\end{eqnarray*}
Given a digital image $(A_{\iota},\kappa_{\iota})$ for any $\iota \in \{1,\cdots,r\}$, \textit{the normal product adjacency (or the strong product adjacency)} is denoted by NP$_{m}(\kappa_{1},\cdots,\kappa_{r})$ and given in \linebreak$A_{1} \times \cdots \times A_{r}$ as follows \cite{Berge:1976,Boxer:2017}: Let $m$ be any integer in $\{1,\cdots,r\}$, $(a_{1},\cdots,a_{r})$ and $(a^{'}_{1},\cdots,a^{'}_{r})$ any distinct points in $A_{1} \times \cdots \times A_{r}$. Then $$(a_{1},\cdots,a_{r}) \leftrightarrow_{\text{NP}_{m}(\kappa_{1},\cdots,\kappa_{r})} (a^{'}_{1},\cdots,a^{'}_{r}) \ \ \Leftrightarrow \ \ a_{j} \leftrightarrow_{\kappa_{j}} a^{'}_{j}$$ for $j \in \{1,\cdots,m\}$, and $a_{\iota} = a^{'}_{\iota}$ for all other indices $\iota$. For $m = 2$, a simple notation NP is used instead of NP$_{2}$.

\begin{theorem}\cite{BoxKar:2012}
	For any digital images $(A,\kappa_{1})$ and $(A^{'},\lambda_{1})$, we have that $$\text{NP}(\kappa_{1},\lambda_{1}) = c_{\kappa_{1}+\lambda_{1}}.$$
\end{theorem}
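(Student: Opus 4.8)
The plan is to check the claimed identity of adjacency relations directly, by a two‑way implication. Write $A\subseteq\mathbb{Z}^{s}$ and $A'\subseteq\mathbb{Z}^{t}$ with $\kappa_1=c_u$ on $\mathbb{Z}^s$ and $\lambda_1=c_v$ on $\mathbb{Z}^t$, so that $A\times A'\subseteq\mathbb{Z}^{s+t}$ and $c_{\kappa_1+\lambda_1}$ means $c_{u+v}$ on $\mathbb{Z}^{s+t}$. Denote a point of $A\times A'$ by $(a,a')=(a_1,\dots,a_s,a'_1,\dots,a'_t)$. First I would restate both relations coordinatewise via the definition of $c_p$ recalled above. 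Unfolding $\text{NP}=\text{NP}_2$, the points $(a,a')$ and $(b,b')$ are $\text{NP}(\kappa_1,\lambda_1)$-adjacent iff $(a,a')\neq(b,b')$, $a\leftrightarroweq_{\kappa_1}b$, and $a'\leftrightarroweq_{\lambda_1}b'$; and $a\leftrightarroweq_{c_u}b$ says exactly that the set $I$ of indices where $a,b$ disagree has $|I|\le u$, with $|a_q-b_q|=1$ for $q\in I$ and $a_q=b_q$ otherwise (symmetrically for $a'$, $b'$, $v$, with an index set $J$). On the other side, $(a,a')\leftrightarrow_{c_{u+v}}(b,b')$ says the set $K$ of coordinates of $\mathbb{Z}^{s+t}$ at which the two points disagree has $1\le|K|\le u+v$, each disagreement of absolute value $1$, and all other coordinates equal.

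The forward implication is immediate bookkeeping: if $(a,a')\leftrightarrow_{\text{NP}(\kappa_1,\lambda_1)}(b,b')$ then under the identification $\mathbb{Z}^{s+t}=\mathbb{Z}^s\times\mathbb{Z}^t$ we have $K=I\sqcup J$, so every coordinate in $K$ changes by exactly $1$ and every coordinate outside $K$ is fixed, while $1\le|K|=|I|+|J|\le u+v$ (the lower bound because $(a,a')\neq(b,b')$); hence $(a,a')\leftrightarrow_{c_{u+v}}(b,b')$. No difficulty is expected here.

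The reverse implication is the crux. From $(a,a')\leftrightarrow_{c_{u+v}}(b,b')$, write $K=I\sqcup J$ with $I\subseteq\{1,\dots,s\}$ and $J$ (after subtracting $s$) $\subseteq\{1,\dots,t\}$; each coordinate in $I\cup J$ changes by exactly $1$ and all others are fixed, so $a\leftrightarroweq_{c_u}b$ and $a'\leftrightarroweq_{c_v}b'$ will follow once $|I|\le u$ and $|J|\le v$ are known, and then $(a,a')\neq(b,b')$ yields $\text{NP}(\kappa_1,\lambda_1)$-adjacency. The one delicate point is that $|I|+|J|=|K|\le u+v$ does not by itself give $|I|\le u$ and $|J|\le v$; the resolution is that the relevant adjacencies are the maximal ones, $\kappa_1=c_s$ on $\mathbb{Z}^s$ and $\lambda_1=c_t$ on $\mathbb{Z}^t$, so $u=s$ and $v=t$, whence $|I|\le s=u$ and $|J|\le t=v$ hold automatically since $I\subseteq\{1,\dots,s\}$ and $J\subseteq\{1,\dots,t\}$. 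In fact this suggests a shorter route that bypasses the counting altogether: two distinct points of $\mathbb{Z}^n$ are $c_n$-adjacent exactly when their difference lies in $\{-1,0,1\}^n$, and the coordinatewise description of $\text{NP}$ then reduces the theorem to the set identity $\{-1,0,1\}^s\times\{-1,0,1\}^t=\{-1,0,1\}^{s+t}$. Taking $A,A'\subseteq\mathbb{Z}$ (so $u=v=1$) recovers the basic instance $\text{NP}(c_1,c_1)=c_2$ on $\mathbb{Z}^2$.
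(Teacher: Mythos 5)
This is a quoted preliminary: the paper reproduces the statement from \cite{BoxKar:2012} without proof, so there is no internal argument to compare yours against. On its own terms, your two-way coordinatewise verification is the natural one, and the reduction of the maximal-adjacency case to the set identity $\{-1,0,1\}^{s}\times\{-1,0,1\}^{t}=\{-1,0,1\}^{s+t}$ is a clean way to finish. You have also put your finger on exactly the right spot: the forward inclusion $\mathrm{NP}(c_{u},c_{v})\subseteq c_{u+v}$ is unconditional, while the reverse inclusion needs $|I|\le u$ and $|J|\le v$ separately, which $|I|+|J|\le u+v$ does not give.

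The problem is with your ``resolution.'' Declaring that ``the relevant adjacencies are the maximal ones, $\kappa_{1}=c_{s}$ and $\lambda_{1}=c_{t}$'' imports a hypothesis that is not in the statement, which asserts the identity for \emph{any} digital images $(A,\kappa_{1})$ and $(A^{'},\lambda_{1})$. Without that hypothesis the statement is false, and your own analysis essentially shows why: take $A=A^{'}=\mathbb{Z}^{2}$ with $\kappa_{1}=\lambda_{1}=c_{1}$, so $u=v=1$ and $s=t=2$. Then $(0,0,0,0)\leftrightarrow_{c_{2}}(1,1,0,0)$ in $\mathbb{Z}^{4}$ (two coordinates change, each by $1$), but $(0,0)$ and $(1,1)$ are neither equal nor $c_{1}$-adjacent in $\mathbb{Z}^{2}$, so these points are not $\mathrm{NP}(c_{1},c_{1})$-adjacent. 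In other words, what you have written is a correct proof of the theorem actually proved in \cite{BoxKar:2012}, which carries the maximality hypothesis $\kappa_{1}=c_{s}$, $\lambda_{1}=c_{t}$, together with an implicit disproof of the unrestricted version transcribed here. State that hypothesis explicitly at the outset rather than introducing it midway as a fix; as the argument stands, the reverse implication is proved only for a special case that the theorem, as worded, does not restrict to.
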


\quad Let $[0,z]_{\mathbb{Z}}$ be any digital interval with $2-$adjacency. Any two digitally continuous maps $h$, $k : (A,\kappa_{1}) \rightarrow (A^{'},\lambda_{1})$ are said to be \textit{digitally $(\kappa_{1},\lambda_{1})-$homotopic} \cite{Boxer:1999} if there is a digitally continuous map $$K : A \times [0,z]_{\mathbb{Z}} \rightarrow A^{'}$$ satisfying $K(a,0) = h(a)$ and $K(a,z) = k(a)$ for all $a \in A$ with the properties that, for any fixed $t \in [0,z]_{\mathbb{Z}}$, the map $$K_{t} : A \rightarrow A^{'}$$ defined by $K_{t}(a) = K(a,t)$ is digitally continuous for all $a$, and for any fixed $a \in A$, the map $$K_{a} : [0,z]_{\mathbb{Z}} \rightarrow A^{'}$$ defined by $K_{a}(t) = K(a,t)$ is digitally continuous for all $t$. Here $z$ is the number of steps of $K$. Alternatively, $K$ is called a \textit{digital homotopy} between $h$ and $k$ in $z$ steps. The notation $h \simeq_{\kappa_{1},\lambda_{1}} k$ is used to state that $h$ and $k$ are digitally homotopic to each other. Let $h : (A,\kappa_{1}) \rightarrow (A^{'},\lambda_{1})$ be a digitally continuous map. Then $h$ is called a \textit{digitally $(\kappa_{1},\lambda_{1})-$nullhomotopic} provided that $h \simeq_{\kappa_{1},\lambda_{1}} c$ for a constant map \linebreak$c : A \rightarrow A^{'}$ \cite{Boxer:1994,Khalimsky:1987}. A digital image $(A,\kappa_{1})$ is \textit{digitally $\kappa_{1}-$contractible} \cite{Boxer:1999} if $1_{A}$ and some constant map $c : A \rightarrow A$ are digitally homotopic to each other. Given a digitally $(\kappa_{1},\lambda_{1})-$continuous map $h : A \rightarrow A^{'}$, it is a \textit{digitally $(\kappa_{1},\lambda_{1})-$homotopy equivalence} \cite{Boxer:2005} provided that there is a digitally continuous map $h^{'} : A^{'} \rightarrow A$ for which both the compositions $h \circ h^{'}$ and $h^{'} \circ h$ are digitally homotopic to the identity maps. Given two digitally continuous functions $h : (A,\kappa_{1}) \rightarrow (A^{'},\lambda_{1})$ and $k : (B,\kappa^{'}_{1}) \rightarrow (A^{'},\lambda_{1})$, they are \textit{digitally fiber homotopy equivalent} maps provided that the following diagram commutes with the property that both the compositions $v \circ w$ and $w \circ v$ are digitally homotopic to the identity maps.
$$\xymatrix{
	A \ar[dr]_{h} \ar@<1ex>[rr]^v
	& & B \ar@<1ex>[ll]^w \ar[dl]^{k} \\
	& A^{'}. & }$$

\quad Let $\iota : (B,\delta_{1}) \rightarrow (B \times [0,z]_{\mathbb{Z}},\text{NP}(\delta_{1},2))$, $\iota(t) = (t,0)$, be an inclusion map for a digital image $(B,\delta_{1})$ and $z$ a positive integer. Then a digitally continuous map $h : (A,\kappa_{1}) \rightarrow (A^{'},\lambda_{1})$ has the \textit{digital homotopy lifting property} for $(B,\delta_{1})$ if for any digitally continuous map $k : (B,\delta_{1}) \rightarrow (A,\kappa_{1})$ and any digital homotopy $K : (B \times [0,z]_{\mathbb{Z}},\delta_{1}^{\ast}) \rightarrow (A^{'},\lambda_{1})$ with $h \circ k = K \circ \iota$, there exists a digitally $(\delta_{1}^{\ast},\kappa_{1})$-continuous map $$\widetilde{K} : (B \times [0,z]_{\mathbb{Z}},\delta_{1}^{\ast}) \rightarrow (A,\kappa_{1})$$ for which $h \circ \widetilde{K} = K$ and $\widetilde{K} \circ \iota = k$ \cite{EgeKaraca:2017}. A \textit{digital fibration} $h : (A,\kappa_{1}) \rightarrow (A^{'},\lambda_{1})$ is a digitally continuous map which admits the digital homotopy lifting property for every digital image $(B,\delta_{1}) $\cite{EgeKaraca:2017}. Let $a^{'}$ be any point of $(A^{'},\lambda_{1})$. Then $h^{-1}(a^{'})$ is called the \textit{digital fiber}.

\quad Recall that a digital image can have an infinite number of elements. $A \subset \mathbb{Z}^{r}$ is said to be a \textit{finite digital image} when it has a finite number of elements. For a positive integer $m > 0$, we mean an \textit{$m-$dimensional digital complex} by a finite digital image $(P,\delta) \subseteq \mathbb{Z}^{m}$. An $m-$homotopy is first defined in \cite{Fox:1941} for topological spaces (see page 344). We now express the digital version of this notion. 

\begin{definition}
	Let $h$, $k : (A,\kappa_{1}) \rightarrow (A^{'},\lambda_{1})$ be two digitally continuous maps and $(P,\delta)$ an $m-$dimensional digital complex. If $h \circ \phi \simeq_{\delta,\lambda_{1}} k \circ \phi$ for every digitally $(\delta,\kappa_{1})-$continuous map $\phi : P \rightarrow A$, then $h$ and $k$ are said to be \textit{digitally $m-$homotopic}.
\end{definition}

\quad Let $(A,\kappa_{1})$ be a digital image. Then a subset $B = \{b_{0},b_{1},...,b_{m}\} \subseteq A$ is a \textit{digital $\kappa_{1}-$path} \cite{Ros:1986} from the initial point $b_{0}$ to the desired point $b_{m}$ if $b_{\iota} \leftrightarroweq_{\kappa_{1}} b_{\iota+1}$ for $\iota = 0,1,\cdots,m-1$. $(A,\kappa_{1})$ is called \textit{$\kappa_{1}-$connected} \cite{Ros:1986} provided that there is at least one digital $\kappa_{1}-$path between any two points in $A$.

\quad Assume that $(A,\kappa_{1})$ and $(B,\lambda_{1})$ are any digital images. Then the \textit{function space $B^{A}$ of digital images} \cite{LupOpreaScov:2019} consists of a set
\begin{eqnarray*}
	\{\vartheta : A \rightarrow B \ | \ \vartheta \ \ \text{is a digitally continuous map} \}
\end{eqnarray*}
and $B^{A}$ has an adjacency relation $\delta$ such that for all $\vartheta$, $\theta \in B^{A}$ and $a$, $a^{'} \in A$, $$a \leftrightarroweq_{\kappa_{1}} a^{'} \ \ \Rightarrow \ \ \vartheta(a) \leftrightarroweq_{\lambda_{1}} \theta(a^{'}).$$ Spesifically, choose $A = [0,z]_{\mathbb{Z}}$. Assume that $h$ and $k$ are two digital paths in $B$. Then they are \textit{digitally $\delta-$adjacent to each other} \cite{KaracaIs:2018} provided that $$t_{1} \leftrightarroweq_{2} t_{2} \ \ \Rightarrow \ \ h(t_{1}) \leftrightarroweq_{\lambda_{1}} k(t_{2}).$$ As an example of a digital fibration, we have the digital map $\pi : A^{[0,z]_{\mathbb{Z}}} \rightarrow A \times A$ with $\pi(\vartheta) = (\vartheta(0),\vartheta(z))$ for any digital path $\vartheta \in A^{[0,z]_{\mathbb{Z}}}$ (see Theorem 3.27 of \cite{MelihKaraca:2020}).

\begin{proposition}\cite{MelihKaraca:2020}
	$(B^{C})^{A}$ is digitally isomorphic to $B^{A \times C}$.
\end{proposition}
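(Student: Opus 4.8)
The plan is to exhibit an explicit digital isomorphism $\Phi : (B^{C})^{A} \to B^{A \times C}$, where on the right the product $A \times C$ carries the normal product adjacency $\mathrm{NP}(\kappa_A, \kappa_C)$, and to verify that both $\Phi$ and its inverse respect the function-space adjacency relations. The natural candidate is the digital analogue of the classical exponential law: given $F \in (B^{C})^{A}$ (so $F : A \to B^{C}$ is digitally continuous and each $F(a) : C \to B$ is digitally continuous), define $\Phi(F) : A \times C \to B$ by $\Phi(F)(a,c) = F(a)(c)$. Conversely, for $G \in B^{A \times C}$, define $\Psi(G) : A \to B^{C}$ by $\Psi(G)(a)(c) = G(a,c)$. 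It is immediate that $\Phi$ and $\Psi$ are mutually inverse as set maps, so the content is entirely in checking continuity of $\Phi(F)$, continuity of $\Psi(G)$ at both ``levels,'' and then adjacency-preservation of $\Phi$ and $\Psi$ themselves.

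First I would check that $\Phi(F)$ is genuinely an element of $B^{A \times C}$, i.e.\ digitally $(\mathrm{NP}(\kappa_A,\kappa_C), \lambda_B)$-continuous. Take $(a,c) \leftrightarrow_{\mathrm{NP}(\kappa_A,\kappa_C)} (a',c')$. By definition of the normal product adjacency, $a \leftrightarroweq_{\kappa_A} a'$ and $c \leftrightarroweq_{\kappa_C} c'$ (with at least one strict adjacency). Continuity of $F : A \to B^{C}$ gives $F(a) \leftrightarroweq_\delta F(a')$ in $B^{C}$, where $\delta$ is the function-space adjacency; unwinding the defining property of $\delta$ together with $c \leftrightarroweq_{\kappa_C} c'$ yields $F(a)(c) \leftrightarroweq_{\lambda_B} F(a')(c')$, which is exactly $\Phi(F)(a,c) \leftrightarroweq_{\lambda_B} \Phi(F)(a',c')$. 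Next, for $\Psi(G)$ I would verify two things: that each $\Psi(G)(a) : C \to B$ is digitally continuous (restrict the $\mathrm{NP}$-continuity of $G$ to the slice $\{a\} \times C$, where an adjacency $c \leftrightarrow_{\kappa_C} c'$ lifts to $(a,c) \leftrightarrow_{\mathrm{NP}} (a,c')$), and that $\Psi(G) : A \to B^{C}$ is itself digitally continuous, which again is a direct unwinding of the function-space adjacency condition using slices of the form $A \times \{c\}$.

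Finally I would check that $\Phi$ itself is digitally continuous with respect to the outer function-space adjacencies on $(B^{C})^{A}$ and on $B^{A \times C}$. Suppose $F \leftrightarroweq F'$ in $(B^{C})^{A}$; I must show $\Phi(F) \leftrightarroweq \Phi(F')$ in $B^{A \times C}$, i.e.\ for every $(a,c) \leftrightarroweq_{\mathrm{NP}} (a',c')$ one has $F(a)(c) \leftrightarroweq_{\lambda_B} F'(a')(c')$. This follows by chaining the two levels of adjacency in $(B^{C})^{A}$ against the two components of the product adjacency, exactly as in the continuity check above. The same argument with the roles reversed shows $\Psi$ is digitally continuous. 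Since $\Phi$ is a bijection with digitally continuous inverse $\Psi$, it is a digital isomorphism, completing the proof.

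The main obstacle I anticipate is purely bookkeeping: the symbol $\leftrightarroweq$ allows equality, so ``$(a,c) \leftrightarroweq_{\mathrm{NP}} (a',c')$'' can mean one coordinate is strictly adjacent while the other is equal, or both are equal, and one must make sure the chain of implications goes through uniformly in all these cases (the normal product adjacency is precisely designed so that it does). A secondary subtlety is confirming that the function-space adjacency $\delta$ on $B^{C}$ that is used \emph{implicitly} inside $(B^{C})^{A}$ is the same one that makes the unwinding above valid; this is guaranteed by the definition quoted in the preliminaries, but it is worth stating explicitly so the two ``directions'' of adjacency are not conflated. No deep idea is needed beyond the classical adjunction $\mathrm{Hom}(A, \mathrm{Hom}(C,B)) \cong \mathrm{Hom}(A \times C, B)$ transported faithfully into the digital category.
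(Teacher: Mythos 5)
This proposition is quoted in the paper's preliminaries as a result of \cite{MelihKaraca:2020} and no proof is given in the present paper, so there is nothing internal to compare against; your proposal is the standard currying/exponential-law argument, which is exactly how the cited source establishes it. Your argument is correct, including the two points you flag yourself: the case where $\leftrightarroweq$ degenerates to equality (where one must fall back on the continuity of the individual map $F(a):C\to B$ rather than on the function-space adjacency $\delta$), and the fact that the normal product adjacency $\mathrm{NP}(\kappa_A,\kappa_C)$ is the one for which the adjunction goes through.
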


\quad Recall that TC$(h;\kappa_{1},\lambda_{1})$ of a surjective digitally continuous function\linebreak $h : (A,\kappa_{1}) \rightarrow (A^{'},\lambda_{1})$ is defined by Definition 3.1 in \cite{MelihKaraca:2021}. Generalization of this in digital images is stated as follows by considering the topological setting \cite{aghimirebaba:2023,MelihKaraca:2022}.  

\begin{definition}
	The \textit{digital higher TC} of a digital fibration $h : (A,\kappa_{1}) \rightarrow (A^{'},\lambda_{1})$ is defined as $$\text{n-TC}(h;\kappa_{1},\lambda_{1}) = \text{D}(h \circ pr_{1},\cdots,h \circ pr_{n}; \text{NP}_{n}(\kappa_{1},\cdots,\kappa_{1}),\lambda_{1})$$ for the projection map $pr_{\iota} : (A^{n},\text{NP}_{n}(\kappa_{1},\cdots,\kappa_{1})) \rightarrow (A,\kappa_{1})$ with $\iota \in \{1,\cdots,n\}$.
\end{definition}

\section{$m-$homotopic Distance in Digital Images}
\label{sec:2}

\subsection{Digital $m-$homotopic Distance}
\label{subsec:1}

\quad This section is devoted to the digital image adaptation of our main character, the homotopic distance.

\begin{definition}\label{def1}
	Let $h$, $k : (A,\kappa_{1}) \rightarrow (A^{'},\lambda_{1})$ be any digitally continuous maps and $(P,\delta)$ an $m-$dimensional digital complex for $m > 0$. Then the digital $m-$homotopic distance D$_{m}(h,k;\kappa_{1},\lambda_{1})$ between $h$ and $k$ is the least integer $q \geq 0$ provided that $A = X_{0} \cup \cdots \cup X_{q}$ such that, for each $X_{j}$, $j = 0,\cdots,q$, any $(\delta,\kappa_{1})-$continuous map $\phi : P \rightarrow X_{j}$ admits the property that $h \circ \phi \simeq_{\delta,\lambda_{1}} k \circ \phi$.
\end{definition}

\quad To eliminate the difficulty in the notation, we generally use D$_{m}(h,k)$ instead of D$_{m}(h,k;\kappa_{1},\lambda_{1})$. Definition \ref{def1} has some quick observations. For example, the order of $h$ and $k$ is insignificant to compute D$_{m}$. Mathematically saying, $$\text{D}_{m}(h,k) = \text{D}_{m}(k,h).$$ Another result states that digital homotopic maps make the computation of D$_{m}$ trivial. The converse is also true, i.e., $$\text{D}_{m}(h,k) = 0 \ \Leftrightarrow \ h \simeq_{\kappa_{1},\lambda_{1}} k.$$ 

\begin{theorem}\label{thm3}
	Let $h$, $k : (A,\kappa_{1}) \rightarrow (A^{'},\lambda_{1})$ be $(\kappa_{1},\lambda_{1})-$continuous function. Then the digital $m$-homotopic distance is not greater than the digital homotopic distance, namely that, D$_{m}(h,k) \leq$ D$_{\kappa_{1},\lambda_{1}}(h,k)$.
\end{theorem}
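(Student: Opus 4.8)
The plan is to show that any decomposition of $A$ that witnesses the digital homotopic distance $\text{D}_{\kappa_{1},\lambda_{1}}(h,k)$ already witnesses the digital $m$-homotopic distance $\text{D}_{m}(h,k)$. First I would dispose of the trivial case: if $\text{D}_{\kappa_{1},\lambda_{1}}(h,k) = \infty$ there is nothing to prove. Otherwise, writing $q := \text{D}_{\kappa_{1},\lambda_{1}}(h,k)$, I would fix subsets $U_{0},\ldots,U_{q} \subseteq A$ with $A = U_{0} \cup \cdots \cup U_{q}$ and, for each $j$, a digital homotopy $H_{j} : U_{j} \times [0,z_{j}]_{\mathbb{Z}} \to A^{'}$ from $h|_{U_{j}}$ to $k|_{U_{j}}$, where $U_{j}$ carries the $\kappa_{1}$-adjacency inherited from $A$. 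The claim is that the same sets $X_{j} := U_{j}$ fulfil the condition in Definition \ref{def1} for $\text{D}_{m}$.

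To check the claim, I would fix an index $j$ and an arbitrary $(\delta,\kappa_{1})$-continuous map $\phi : P \to U_{j}$, and form $G_{j} := H_{j} \circ (\phi \times \text{id}) : P \times [0,z_{j}]_{\mathbb{Z}} \to A^{'}$, where $\text{id}$ is the identity of $[0,z_{j}]_{\mathbb{Z}}$. The one step that needs attention is that $\phi \times \text{id}$ is digitally continuous for the normal product adjacencies $\text{NP}(\delta,2)$ and $\text{NP}(\kappa_{1},2)$: if $(x,t) \leftrightarrow_{\text{NP}(\delta,2)} (x^{'},t^{'})$, then $x \leftrightarroweq_{\delta} x^{'}$ and $t \leftrightarroweq_{2} t^{'}$, so $\phi(x) \leftrightarroweq_{\kappa_{1}} \phi(x^{'})$ by continuity of $\phi$ and hence $(\phi(x),t) \leftrightarroweq_{\text{NP}(\kappa_{1},2)} (\phi(x^{'}),t^{'})$. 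Consequently $G_{j}$ is a composite of digitally continuous maps, and its partial maps $G_{j}(\cdot,t) = H_{j}(\cdot,t) \circ \phi$ and $G_{j}(x,\cdot) = H_{j}(\phi(x),\cdot)$ are continuous, being composites of the partial maps of $H_{j}$ with continuous maps; thus $G_{j}$ is a digital homotopy. Evaluating endpoints, $G_{j}(x,0) = H_{j}(\phi(x),0) = h(\phi(x))$ and $G_{j}(x,z_{j}) = H_{j}(\phi(x),z_{j}) = k(\phi(x))$ for all $x \in P$, so $G_{j}$ realises $h \circ \phi \simeq_{\delta,\lambda_{1}} k \circ \phi$. (If no $(\delta,\kappa_{1})$-continuous map $P \to U_{j}$ exists, the condition for that $X_{j}$ holds vacuously, and the argument is unaffected.)

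Since $j$ and $\phi$ were arbitrary, the decomposition $A = X_{0} \cup \cdots \cup X_{q}$ satisfies Definition \ref{def1}, whence $\text{D}_{m}(h,k) \leq q = \text{D}_{\kappa_{1},\lambda_{1}}(h,k)$. I do not anticipate a genuine obstacle here: the whole content is the passage from a homotopy on $U_{j}$ to a homotopy on $P$ by precomposition with $\phi$, and the only place that calls for care is being explicit about which adjacency is placed on $P \times [0,z_{j}]_{\mathbb{Z}}$ so that composing with $H_{j}$ is legitimate — which reduces to the short case analysis on $\text{NP}(\delta,2)$ indicated above.
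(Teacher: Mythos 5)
Your proposal is correct and follows the same route as the paper: take the decomposition $A = X_{0} \cup \cdots \cup X_{q}$ witnessing $\mathrm{D}_{\kappa_{1},\lambda_{1}}(h,k)$ and observe that precomposing the homotopy $h|_{X_{j}} \simeq k|_{X_{j}}$ with any $(\delta,\kappa_{1})$-continuous $\phi : P \to X_{j}$ yields $h \circ \phi \simeq_{\delta,\lambda_{1}} k \circ \phi$. The only difference is that you make explicit the construction $H_{j} \circ (\phi \times \mathrm{id})$ and the adjacency check on $P \times [0,z_{j}]_{\mathbb{Z}}$, which the paper leaves implicit.
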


\begin{proof}
	Assume that D$_{\kappa_{1},\lambda_{1}}(h,k) = q$. Then $A = X_{0} \cup \cdots \cup X_{q}$ such that $$h\big|_{X_{j}} \simeq_{\kappa_{1},\lambda_{1}} k\big|_{X_{j}}$$ for each $j = 0,\cdots,q$. Let $P$ be an $m-$dimensional digital complex such that $\phi : (P,\delta) \rightarrow (X_{j},\kappa_{1})$ is any digitally continuous map. Therefore, we obtain that $$h\big|_{X_{j}} \circ \phi \simeq_{\delta,\lambda_{1}} k\big|_{X_{j}} \circ \phi.$$ Since $h\big|_{X_{j}} \circ \phi = h \circ \phi$ and $k\big|_{X_{j}} \circ \phi = k \circ \phi$, it follows that $$h \circ \phi \simeq_{\delta,\lambda_{1}} k \circ \phi.$$ Thus, D$_{m}(h,k) \leq q$. 
\end{proof}

\begin{corollary}\label{cor1}
	Given any $(\kappa_{1},\lambda_{1})-$continuous functions $h$, $k : (A,\kappa_{1}) \rightarrow (A^{'},\lambda_{1})$ such that $A$ is finite and $\kappa_{1}-$connected, we have that D$_{m}(h,k) < \infty$.
\end{corollary}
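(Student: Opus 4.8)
The plan is to reduce to the ordinary digital homotopic distance via Theorem \ref{thm3} and then exploit finiteness of $A$ by covering it with its singletons. Concretely, by Theorem \ref{thm3} we have $\text{D}_m(h,k) \leq \text{D}_{\kappa_1,\lambda_1}(h,k)$, so it is enough to prove $\text{D}_{\kappa_1,\lambda_1}(h,k) < \infty$; alternatively one can argue directly from Definition \ref{def1}, and I would keep the two versions parallel since they rely on the same mechanism.

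Since $A$ is finite, write $A = \{a_0,\dots,a_N\}$ and set $X_j = \{a_j\}$ for $j = 0,\dots,N$, so that $A = X_0 \cup \cdots \cup X_N$ is a cover by $N+1$ pieces. Fix $j$. In the direct formulation, any $(\delta,\kappa_1)$-continuous map $\phi : (P,\delta) \to (X_j,\kappa_1)$ is necessarily the constant map with value $a_j$, so $h \circ \phi$ is the constant map at $h(a_j)$ and $k \circ \phi$ is the constant map at $k(a_j)$. In the reduction via $\text{D}_{\kappa_1,\lambda_1}$, the restrictions $h|_{X_j}$ and $k|_{X_j}$ are simply the maps $\{a_j\} \to A'$ taking $a_j$ to $h(a_j)$ and to $k(a_j)$. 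Either way the task is the same: to show that the constant maps at $h(a_j)$ and at $k(a_j)$ are digitally homotopic.

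The key step is to produce a digital $\lambda_1$-path $\gamma : [0,z]_{\mathbb{Z}} \to A'$ from $h(a_j)$ to $k(a_j)$ and to promote it to a map $K(p,t) = \gamma(t)$ (resp.\ $K(a_j,t) = \gamma(t)$). Then for each fixed $t$ the slice $K_t$ is constant and hence digitally continuous, for each fixed point the slice $K_{(\cdot)}$ equals $\gamma$ and hence is digitally continuous, and $K$ itself is digitally continuous; thus $K$ is a digital homotopy in $z$ steps between the two constant maps. Consequently every piece $X_j$ satisfies the defining condition, giving $\text{D}_m(h,k) \leq N < \infty$ (equivalently $\text{D}_{\kappa_1,\lambda_1}(h,k) \leq N$, whence the conclusion by Theorem \ref{thm3}).

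The one point that needs care --- and the place I expect the real obstacle to sit --- is the existence of the path $\gamma$, which requires $h(a_j)$ and $k(a_j)$ to lie in the same $\lambda_1$-connected component of $A'$. The hypothesis that $(A,\kappa_1)$ is $\kappa_1$-connected ensures that $h(A)$ and $k(A)$ are each $\lambda_1$-connected as continuous images of a connected digital image, but not a priori that they share a component; so the argument goes through precisely when $A'$ (or at least $h(A) \cup k(A)$) is $\lambda_1$-connected, which is the standing situation in the settings where $\text{D}_m$ is studied. Granting that mild extra hypothesis, the finite singleton cover is essentially the whole proof, and the remaining checks (continuity of $K$ and of its slices) are routine from the definition of a digital homotopy recalled in Section \ref{sec:1}.
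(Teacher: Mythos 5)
Your proposal follows the same top-level route as the paper --- apply Theorem \ref{thm3} to reduce the claim to finiteness of the ordinary digital homotopic distance D$_{\kappa_{1},\lambda_{1}}(h,k)$ --- but where the paper simply cites Proposition 3.2 of Borat's paper \cite{Borat:2021} for that finiteness, you reprove it from scratch via the singleton cover $A = \{a_{0}\} \cup \cdots \cup \{a_{N}\}$ and an explicit digital homotopy $K(a_{j},t)=\gamma(t)$ built from a $\lambda_{1}$-path $\gamma$ joining $h(a_{j})$ to $k(a_{j})$. That argument is correct and is essentially the content of the cited proposition, so your version is more self-contained at the cost of redoing known work. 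The caveat you flag is genuine and worth keeping: the singleton argument needs $h(a_{j})$ and $k(a_{j})$ to lie in a common $\lambda_{1}$-connected component of $A^{'}$, and the $\kappa_{1}$-connectedness of $A$ alone does not deliver this; the connectivity hypothesis on the codomain is part of the setting of the cited Proposition 3.2 but is not stated explicitly in the corollary, so your observation identifies an implicit hypothesis rather than a flaw in your own argument. With that hypothesis granted (as the paper implicitly does), your proof is complete.
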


\begin{proof}
	By Proposition 3.2 of \cite{Borat:2021}, we have that D$_{\kappa_{1},\lambda_{1}}(h,k) < \infty$. Theorem \ref{thm3} concludes that D$_{m}(h,k) < \infty$.
\end{proof}

\begin{proposition}
	Given any digitally $(\kappa_{1},\lambda_{1})-$continuous maps $h_{1}$, $h_{2}$, $k_{1}$, $k_{2} : (A,\kappa_{1}) \rightarrow (A^{'},\lambda_{1})$ such that $h_{1} \simeq_{\kappa_{1},\lambda_{1}} h_{2}$ and $k_{1} \simeq_{\kappa_{1},\lambda_{1}} k_{2}$, we have that D$_{m}(h_{1},k_{1}) =$ D$_{m}(h_{2},k_{2})$.
\end{proposition}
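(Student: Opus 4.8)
The plan is to show the two inequalities $\text{D}_{m}(h_1,k_1) \leq \text{D}_{m}(h_2,k_2)$ and $\text{D}_{m}(h_2,k_2) \leq \text{D}_{m}(h_1,k_1)$; by symmetry of the hypotheses it suffices to prove one of them, say the first, and then swap the roles of the pairs. So I would set $\text{D}_{m}(h_2,k_2) = q$ and fix a covering $A = X_0 \cup \cdots \cup X_q$ witnessing it, meaning that for each $j$ and each digitally $(\delta,\kappa_1)$-continuous map $\phi : (P,\delta) \to (X_j,\kappa_1)$ from an $m$-dimensional digital complex $(P,\delta)$ one has $h_2 \circ \phi \simeq_{\delta,\lambda_1} k_2 \circ \phi$. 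The goal is to verify that the very same covering witnesses $\text{D}_{m}(h_1,k_1) \leq q$.

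The key step is a chain of digital homotopies. Fix $j$ and a digitally continuous $\phi : P \to X_j$. Composing the homotopy $h_1 \simeq_{\kappa_1,\lambda_1} h_2$ with $\phi$ on the right gives $h_1 \circ \phi \simeq_{\delta,\lambda_1} h_2 \circ \phi$ (a digital homotopy $K : A \times [0,z]_{\mathbb{Z}} \to A'$ between $h_1$ and $h_2$ yields $K \circ (\phi \times \mathrm{id})$ between $h_1\circ\phi$ and $h_2\circ\phi$, and one checks the slice conditions are inherited since $\phi$ and $K$ are digitally continuous). Similarly $k_2 \circ \phi \simeq_{\delta,\lambda_1} k_1 \circ \phi$. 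Combining these with $h_2 \circ \phi \simeq_{\delta,\lambda_1} k_2 \circ \phi$ and using transitivity of digital homotopy (concatenation of homotopies, adding the step counts), I obtain
$$h_1 \circ \phi \simeq_{\delta,\lambda_1} h_2 \circ \phi \simeq_{\delta,\lambda_1} k_2 \circ \phi \simeq_{\delta,\lambda_1} k_1 \circ \phi,$$
hence $h_1 \circ \phi \simeq_{\delta,\lambda_1} k_1 \circ \phi$. Since $j$ and $\phi$ were arbitrary, the covering $\{X_j\}$ witnesses $\text{D}_{m}(h_1,k_1) \leq q = \text{D}_{m}(h_2,k_2)$. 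Exchanging $(h_1,k_1)$ with $(h_2,k_2)$ (legitimate because the homotopies $h_1 \simeq h_2$, $k_1 \simeq k_2$ are symmetric) gives the reverse inequality, and therefore equality.

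The only mildly delicate point is the claim that precomposition with a digitally continuous map and concatenation preserve digital homotopy in the $(\delta,\lambda_1)$-sense — that is, that the slicewise-continuity requirements in the definition of digital homotopy survive these operations. This is standard (it is implicit in the composition and transitivity properties of digital homotopy recalled in Section \ref{sec:1}), so I would dispatch it in a sentence rather than belabor the step counts. If one wants to avoid even mentioning it, an alternative is to note that $\text{D}_m(h,k)=0 \iff h \simeq_{\kappa_1,\lambda_1} k$ already records the case-free version, but the covering argument above is the cleanest route and mirrors the proof of Theorem \ref{thm3}.
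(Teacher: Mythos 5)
Your proposal is correct and follows essentially the same route as the paper: take the covering witnessing one distance and verify it witnesses the other via the chain $h_1\circ\phi\simeq h_2\circ\phi\simeq k_2\circ\phi\simeq k_1\circ\phi$, then conclude by symmetry. Your extra remarks on why precomposition and concatenation preserve digital homotopy are points the paper leaves implicit, but the argument is the same.
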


\begin{proof}
	Assume that D$_{m}(h_{1},k_{1}) = q$. Then we have that $A = X_{0} \cup \cdots \cup X_{q}$ such that, for each $X_{j}$, $j = 0,\cdots,q$, any $(\delta,\kappa_{1})-$continuous map $\phi : P \rightarrow X_{j}$ admits the property that $h_{1} \circ \phi \simeq_{\delta,\lambda_{1}} k_{1} \circ \phi$ for any digital $m-$dimensional complex $P$. Since $h_{1} \simeq_{\kappa_{1},\lambda_{1}} h_{2}$ and $k_{1} \simeq_{\kappa_{1},\lambda_{1}} k_{2}$, we obtain that $h_{2} \circ \phi \simeq_{\delta,\lambda_{1}} h_{1} \circ \phi$ and $k_{1} \circ \phi \simeq_{\delta,\lambda_{1}} k_{2} \circ \phi$, respectively. Therefore, we obtain that
	\begin{eqnarray*}
		h_{2} \circ \phi \simeq_{\delta,\lambda_{1}} h_{1} \circ \phi \simeq_{\delta,\lambda_{1}} k_{1} \circ \phi \simeq_{\delta,\lambda_{1}} k_{2} \circ \phi.
	\end{eqnarray*}
    It follows that D$_{m}(h_{2},k_{2}) \leq q$. Similarly, by using the same process, D$_{m}(h_{2},k_{2}) = q$ implies that D$_{m}(h_{1},k_{1}) \leq q$.
\end{proof}

\begin{proposition}\label{prop1}
	Let $h$, $k : (A,\kappa_{1}) \rightarrow (A^{'},\lambda_{1})$ be any digitally $(\kappa_{1},\lambda_{1})-$continuous map. Then
	
	\textbf{i)} D$_{m}(\alpha \circ h,\alpha \circ k) \leq$ D$_{m}(h,k)$ for any digitally $(\lambda_{1},\omega)-$continuous map \linebreak$\alpha : (A^{'},\lambda_{1}) \rightarrow (B,\omega)$. 
	
	\textbf{ii)} D$_{m}(h \circ \beta,k \circ \beta) \leq$ D$_{m}(h,k)$ for any digitally $(\omega,\kappa_{1})-$continuous map \linebreak $\beta : (B,\omega) \rightarrow (A,\kappa_{1})$.
\end{proposition}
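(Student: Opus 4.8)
The plan is to prove both inequalities by the same strategy: start with an optimal categorical cover witnessing $\mathrm{D}_{m}(h,k)=q$ and show that the \emph{same} cover (or its preimage) works for the composed maps. First I would set $\mathrm{D}_{m}(h,k)=q$ and write $A = X_{0}\cup\cdots\cup X_{q}$ so that for each $j$ and every digitally $(\delta,\kappa_{1})$-continuous $\phi:P\to X_{j}$ (with $P$ an $m$-dimensional digital complex) we have $h\circ\phi\simeq_{\delta,\lambda_{1}}k\circ\phi$.

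For part (i), I would keep the cover $A = X_{0}\cup\cdots\cup X_{q}$ unchanged. Fix $j$ and an arbitrary $(\delta,\kappa_{1})$-continuous $\phi:P\to X_{j}$. By hypothesis $h\circ\phi\simeq_{\delta,\lambda_{1}}k\circ\phi$, so there is a digital homotopy $K:P\times[0,z]_{\mathbb{Z}}\to A^{'}$ realizing it. Post-composing with $\alpha$, the map $\alpha\circ K:P\times[0,z]_{\mathbb{Z}}\to B$ is digitally continuous (composition of digitally continuous maps is digitally continuous), and it is a digital homotopy from $\alpha\circ h\circ\phi$ to $\alpha\circ k\circ\phi$ because $\alpha$ is $(\lambda_{1},\omega)$-continuous; one checks the two slice conditions ($K_{t}$ continuous for fixed $t$, $K_{a}$ continuous for fixed $a$) are preserved under post-composition with $\alpha$. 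Hence $(\alpha\circ h)\circ\phi\simeq_{\delta,\omega}(\alpha\circ k)\circ\phi$ for every such $\phi$, and the same cover witnesses $\mathrm{D}_{m}(\alpha\circ h,\alpha\circ k)\le q$.

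For part (ii), the cover of $A$ must be pushed back to a cover of $B$: set $Y_{j}=\beta^{-1}(X_{j})$, so $B = Y_{0}\cup\cdots\cup Y_{q}$. Fix $j$ and an arbitrary $(\delta,\omega)$-continuous $\psi:P\to Y_{j}$. Then $\beta\circ\psi:P\to X_{j}$ is $(\delta,\kappa_{1})$-continuous (note the image lands in $X_{j}$ precisely because $Y_{j}=\beta^{-1}(X_{j})$), so by the defining property of the cover, $h\circ(\beta\circ\psi)\simeq_{\delta,\lambda_{1}}k\circ(\beta\circ\psi)$, i.e. $(h\circ\beta)\circ\psi\simeq_{\delta,\lambda_{1}}(k\circ\beta)\circ\psi$. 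Since $\psi$ and $j$ were arbitrary, the cover $\{Y_{j}\}$ witnesses $\mathrm{D}_{m}(h\circ\beta,k\circ\beta)\le q$.

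The arguments are essentially formal, so there is no serious obstacle; the only point requiring a little care is the bookkeeping in part (ii) — verifying that restricting $\beta$ to $Y_{j}=\beta^{-1}(X_{j})$ genuinely produces a map into $X_{j}$ that remains digitally continuous in the relevant adjacencies, and that the covering condition $B=\bigcup_j Y_j$ follows from $A=\bigcup_j X_j$. I would also remark, as in the preceding proofs, that the identity $(\alpha\circ h)\circ\phi = \alpha\circ(h\circ\phi)$ (and similarly for $k$ and for $\beta$) is what lets the homotopy computed on $A^{'}$ (resp. pulled back from $A$) be transported to the composite maps.
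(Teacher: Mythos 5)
Your proposal is correct and follows essentially the same route as the paper: part (i) reuses the optimal cover of $A$ and post-composes the witnessing homotopies with $\alpha$, and part (ii) pulls the cover back via $Y_{j}=\beta^{-1}(X_{j})$ and observes that any admissible $\phi^{'}:P\to Y_{j}$ composes with $\beta$ to give an admissible map into $X_{j}$. Your added care about the slice-continuity conditions of the digital homotopy under post-composition and about why $B=\bigcup_{j}Y_{j}$ is a welcome but minor elaboration of what the paper leaves implicit.
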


\begin{proof}
	Assume that D$_{m}(h,k) = q$, i.e.,
	$A$ can be written as the union of $X_{0},\cdots,X_{q}$ such that, for each $X_{j}$, $j = 0,\cdots,q$, any $(\delta,\kappa_{1})-$continuous map $\phi : P \rightarrow X_{j}$ admits the property that $h \circ \phi \simeq_{\delta,\lambda_{1}} k \circ \phi$ for any digital $m-$dimensional complex $P$. 
	
	\textbf{i)} Then we obtain
	\begin{eqnarray*}
		h \circ \phi \simeq_{\delta,\lambda_{1}} k \circ \phi \ \Rightarrow \ (\alpha \circ h) \circ \phi \simeq_{\delta,\omega} (\alpha \circ k) \circ \phi,
	\end{eqnarray*}
	which shows that D$_{m}(\alpha \circ h,\alpha \circ k) \leq q$.
    
    \textbf{ii)} Let $Y_{j} = \beta^{-1}(X_{j})$ for each $j$. Then $B$ can be written as the union of $Y_{0},\cdots,Y_{q}$. Since $\phi$ is the composition $\beta \circ \phi^{'}$ for any $(\delta,\omega)-$continuous map $\phi^{'} : P \rightarrow Y_{j}$, we find that
    \begin{eqnarray*}
    	h \circ \phi \simeq_{\delta,\lambda_{1}} k \circ \phi \ \Rightarrow \ h \circ (\beta \circ \phi^{'}) \simeq_{\delta,\lambda_{1}} k \circ (\beta \circ \phi^{'}) \ \Rightarrow \ (h \circ \beta) \circ \phi^{'} \simeq_{\delta,\lambda_{1}} (k \circ \beta) \circ \phi^{'}.
    \end{eqnarray*}
    This shows that D$_{m}(h \circ \beta,k \circ \beta) \leq q$.
\end{proof}

\subsection{Related Notions}
\label{subsec:2}

\begin{definition}\label{def2}
    Let $(A,\kappa_{1})$ be a digital image and $(P,\delta)$ an $m-$dimensional digital complex for $m > 0$. Then the digital $m-$Lusternik-Schnirelmann category cat$_{m}(A,\kappa_{1})$ is the least integer $q \geq 0$ provided that $A = X_{0} \cup \cdots \cup X_{q}$ such that, for each $X_{j}$, $j = 0,\cdots,q$, any $(\delta,\kappa_{1})-$continuous map $\phi : P \rightarrow X_{j}$ admits the property that $i_{j} \circ \phi$ is digitally nullhomotopic (homotopic to a constant map $c : P \rightarrow A$) for the inclusion map $i_{j} : X_{j} \rightarrow A$.
\end{definition}

\quad In a specific case, if the inclusion map $i_{j}$ in Definition \ref{def2} is digitally homotopic to a constant map $c : P \rightarrow A$, then cat$_{m}(A,\kappa_{1})$ corresponds to cat$(A,\kappa_{1})$. This means that cat$_{m}(A,\kappa_{1}) \leq$ cat$(A,\kappa_{1})$.

\begin{theorem}\label{thm1}
	For any digitally $\kappa_{1}-$connected digital image $A$,
	\begin{eqnarray*}
		\text{cat}_{m}(A,\kappa_{1}) = \text{D}_{m}(1_{A},c).
	\end{eqnarray*}
\end{theorem}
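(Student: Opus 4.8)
The plan is to prove the two inequalities $\text{cat}_{m}(A,\kappa_{1}) \leq \text{D}_{m}(1_{A},c)$ and $\text{D}_{m}(1_{A},c) \leq \text{cat}_{m}(A,\kappa_{1})$ separately by unwinding both definitions, which are structurally almost identical: each asks for the least $q$ with a cover $A = X_{0} \cup \cdots \cup X_{q}$ such that on each piece every $(\delta,\kappa_{1})$-continuous map $\phi : P \to X_{j}$ satisfies a certain homotopy condition after composing with the data. The whole content is recognizing that the condition "$i_{j} \circ \phi$ is nullhomotopic" in Definition \ref{def2} is exactly the condition "$1_{A} \circ \phi \simeq_{\delta,\lambda_{1}} c \circ \phi$" in Definition \ref{def1} when we take $\kappa_{1} = \lambda_{1}$, $h = 1_{A}$, $k = c$ the constant map $A \to A$.

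First I would prove $\text{D}_{m}(1_{A},c) \leq \text{cat}_{m}(A,\kappa_{1})$. Suppose $\text{cat}_{m}(A,\kappa_{1}) = q$, so $A = X_{0} \cup \cdots \cup X_{q}$ with the property that for each $j$ and each $(\delta,\kappa_{1})$-continuous $\phi : P \to X_{j}$, the composite $i_{j} \circ \phi : P \to A$ is digitally nullhomotopic. Now view $1_{A} \circ \phi$ as a map $P \to A$: since $1_{A}$ restricted to $X_{j}$ is precisely $i_{j}$, we have $1_{A} \circ \phi = i_{j} \circ \phi$, which is homotopic to a constant map, and a constant map $P \to A$ equals $c \circ \phi$ for the appropriate constant $c : A \to A$. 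Hence $1_{A} \circ \phi \simeq_{\delta,\kappa_{1}} c \circ \phi$ for every such $\phi$, so the same cover witnesses $\text{D}_{m}(1_{A},c) \leq q$.

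Next I would prove the reverse inequality $\text{cat}_{m}(A,\kappa_{1}) \leq \text{D}_{m}(1_{A},c)$ by running the argument backwards. Suppose $\text{D}_{m}(1_{A},c) = q$ with cover $A = X_{0} \cup \cdots \cup X_{q}$ so that for each $j$ and each $(\delta,\kappa_{1})$-continuous $\phi : P \to X_{j}$ we have $1_{A} \circ \phi \simeq_{\delta,\kappa_{1}} c \circ \phi$. Again using $1_{A}\big|_{X_{j}} = i_{j}$, this reads $i_{j} \circ \phi \simeq_{\delta,\kappa_{1}} c \circ \phi$, and $c \circ \phi$ is a constant map $P \to A$; therefore $i_{j} \circ \phi$ is digitally nullhomotopic. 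The same cover then witnesses $\text{cat}_{m}(A,\kappa_{1}) \leq q$, and combining the two inequalities gives equality.

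The only point requiring a little care — and the main (mild) obstacle — is the bookkeeping around constant maps: in Definition \ref{def1} the map $c$ is a fixed constant map $A \to A$, while in Definition \ref{def2} "nullhomotopic" allows homotopy to \emph{any} constant map $c : P \to A$. One must note that since $A$ is $\kappa_{1}$-connected, all constant maps $P \to A$ are digitally homotopic to one another (a digital $\kappa_{1}$-path between the two image points yields the required homotopy), so "$i_{j}\circ\phi$ is homotopic to some constant" is equivalent to "$i_{j}\circ\phi$ is homotopic to the particular constant $c\big|_{X_{j}}\circ\phi = c\circ\phi$". With this observation the two definitions match on the nose and the equality follows. (If the paper's convention already fixes the same basepoint in both definitions, this step is immediate.)
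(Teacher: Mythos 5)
Your proof is correct and follows essentially the same route as the paper's: unwind both definitions, identify $1_{A}\big|_{X_{j}} = i_{j}$, and transfer the same cover in each direction to get the two inequalities. You are in fact slightly more careful than the paper on the one delicate point, since the paper simply asserts the equality $c^{'} = c \circ \phi$ between constant maps, whereas you correctly note that when the image points differ one needs the $\kappa_{1}$-connectedness hypothesis to conclude the constant maps are digitally homotopic.
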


\begin{proof}
	Let cat$_{m}(A,\kappa_{1}) = q$. Then $A = X_{0} \cup \cdots \cup X_{q}$ and, for each $X_{j}$, $j = 0,\cdots,q$, we have that $$i_{j} \circ \phi \simeq_{\delta,\kappa_{1}} c^{'}$$ for any $(\delta,\kappa_{1})-$continuous map $\phi : P \rightarrow X_{j}$, the inclusion map $i_{j} : X_{j} \rightarrow A$, and the constant map $c^{'} : P \rightarrow A$. Since $c^{'} = c \circ \phi$ for the constant map $c : A \rightarrow A$, it follows that
	\begin{eqnarray*}
		i_{j} \circ \phi \simeq_{\delta,\kappa_{1}} c^{'} \ \Rightarrow \ i_{j} \circ \phi \simeq_{\delta,\kappa_{1}} c \circ \phi \ \Rightarrow \ i_{j} \simeq_{\kappa_{1},\kappa_{1}} c, 
	\end{eqnarray*}
    which concludes that D$_{m}(1_{A},c) \leq q$. Conversely, if D$_{m}(1_{A},c) = q$, then we have that $A = X_{0} \cup \cdots \cup X_{q}$ and, for each $X_{j}$, $j = 0,\cdots,q$, one admits $$1_{A} \circ \phi \simeq_{\delta,\kappa_{1}} c \circ \phi$$ for any $(\delta,\kappa_{1})-$continuous map $\phi : P \rightarrow X_{j}$. Therefore, we obtain
    \begin{eqnarray*}
    	1_{A} \circ \phi \simeq_{\delta,\kappa_{1}} c \circ \phi \ \Rightarrow \ \phi \simeq_{\delta,\kappa_{1}} c^{'} \ \Rightarrow \ i_{j} \circ \phi \simeq_{\delta,\kappa_{1}} c^{'},
    \end{eqnarray*}
    which means that cat$_{m}(A,\kappa_{1}) \leq q$.
\end{proof}

\quad In Theorem \ref{thm1}, when we consider any digital maps $h : (A,\kappa_{1}) \rightarrow (A^{'},\lambda_{1})$ and $k : (A,\kappa_{1}) \rightarrow (A^{'},\lambda_{1})$ instead of $1_{A} : (A,\kappa_{1}) \rightarrow (A,\kappa_{1})$ and $c : (A,\kappa_{1}) \rightarrow (A,\kappa_{1})$, respectively, we have the following fact:

\begin{corollary}\label{cor2}
	D$_{m}(h,k) \leq$ cat$_{m}(A,\kappa_{1})$.
\end{corollary}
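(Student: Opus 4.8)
The plan is to mimic the proof strategy of Theorem \ref{thm1}, but only in one direction, since Corollary \ref{cor2} asserts a single inequality rather than an equality. Recall that cat$_m$ is, by the remark following Theorem \ref{thm1}, essentially D$_m(1_A, c)$; the corollary is supposed to be the consequence of replacing $1_A$ and $c$ by arbitrary maps $h, k$ in the argument showing cat$_m(A,\kappa_1) \leq$ D$_m(1_A,c)$. So I would start by letting cat$_m(A,\kappa_1) = q$, which gives a covering $A = X_0 \cup \cdots \cup X_q$ such that for every $(\delta,\kappa_1)$-continuous map $\phi : P \to X_j$ the composite $i_j \circ \phi$ is digitally nullhomotopic, say $i_j \circ \phi \simeq_{\delta,\lambda_1} c'$ where $c' : P \to A$ is constant (here I conflate $i_j : X_j \hookrightarrow A$ with the inclusion viewed as a map into $A'$ via $h$ — see below for the subtlety).

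The key observation to push through is that on each piece $X_j$, the restrictions $h|_{X_j}$ and $k|_{X_j}$ agree up to $m$-homotopy because each of them factors, up to $m$-homotopy, through a constant map. Concretely, for a $(\delta,\kappa_1)$-continuous $\phi : P \to X_j$, I would argue $h \circ i_j \circ \phi = h \circ \phi$ and, since $i_j \circ \phi \simeq_{\delta,\kappa_1} c'$ with $c'$ constant, Proposition \ref{prop1}(i)-style post-composition with $h$ gives $h \circ i_j \circ \phi \simeq_{\delta,\lambda_1} h \circ c'$, and $h \circ c'$ is again constant. The same reasoning applied to $k$ yields $k \circ \phi \simeq_{\delta,\lambda_1} k \circ c'$, also constant. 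Since $A$ is $\kappa_1$-connected, any two constant maps $P \to A'$ whose images lie in $h(A) \cup k(A)$ are digitally $(\delta,\lambda_1)$-homotopic (connect the two image points by a digital $\kappa_1$-path in $A$, push it forward, and use it as a homotopy of constant maps — this is the standard fact that $\kappa_1$-connectedness makes all constant maps into the target mutually nullhomotopic). Chaining the homotopies, $h \circ \phi \simeq_{\delta,\lambda_1} k \circ \phi$ on each $X_j$, so the same covering witnesses D$_m(h,k) \leq q$, which is the claim.

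The main obstacle I anticipate is bookkeeping the target of the nullhomotopy correctly: Definition \ref{def2} states that $i_j \circ \phi$ is digitally nullhomotopic as a map into $(A,\kappa_1)$, whereas Definition \ref{def1} for D$_m(h,k)$ asks for a homotopy of maps into $(A',\lambda_1)$. The bridge is exactly the substitution described in the paragraph before the corollary: one should read "cat$_m(A,\kappa_1)$" here through the identification cat$_m(A,\kappa_1) =$ D$_m(1_A, c)$ from Theorem \ref{thm1}, so that the relevant covering property is "$1_A \circ \phi \simeq_{\delta,\kappa_1} c \circ \phi$", and then post-compose this homotopy with $h$ (respectively with $k$) using the monotonicity under post-composition. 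That is, I would not invoke Definition \ref{def2} directly but rather invoke Theorem \ref{thm1} to rewrite cat$_m$ as D$_m(1_A,c)$, then apply Proposition \ref{prop1}(i) twice — once with $\alpha = h$ to get D$_m(h, h \circ c) \leq$ D$_m(1_A, c)$, and note $h \circ c$ is a constant map; similarly involving $k$ — and finally use $\kappa_1$-connectedness to merge the constant maps. This keeps every homotopy in the correct adjacency and makes the inequality fall out cleanly, with the connectedness step being the only genuinely non-formal ingredient.
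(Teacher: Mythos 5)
Your overall strategy --- take the covering witnessing $\mathrm{cat}_{m}(A,\kappa_{1})=q$, post-compose the nullhomotopy of $i_{j}\circ\phi$ with $h$ and with $k$ so that $h\circ\phi$ and $k\circ\phi$ are each homotopic to a constant, and then merge the two constants --- is exactly what the paper intends: the corollary is justified there only by the remark that one substitutes $h$ and $k$ for $1_{A}$ and $c$ in Theorem \ref{thm1}, so your write-up is actually more explicit than the paper's. The first two steps are fine: $h\circ i_{j}\circ\phi = h\circ\phi \simeq_{\delta,\lambda_{1}} h\circ c^{'}$ and $k\circ\phi \simeq_{\delta,\lambda_{1}} k\circ c^{'}$, and both right-hand sides are constant maps.

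The gap is in the merging step, which you yourself flag as the only non-formal ingredient. If $c^{'}$ is constant at $a_{0}\in A$, the two maps you must identify are the constants $P\to A^{'}$ with values $h(a_{0})$ and $k(a_{0})$, and homotoping one to the other requires a digital $\lambda_{1}$-path in $A^{'}$ from $h(a_{0})$ to $k(a_{0})$. Your justification --- ``connect the two image points by a digital $\kappa_{1}$-path in $A$ and push it forward'' --- does not produce such a path: pushing a $\kappa_{1}$-path forward under $h$ only connects points within $h(A)$, and under $k$ only points within $k(A)$; there is no single map carrying one endpoint to $h(a_{0})$ and the other to $k(a_{0})$. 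The $\kappa_{1}$-connectedness of $A$ is not the relevant hypothesis; what is needed is that $h(a_{0})$ and $k(a_{0})$ lie in the same $\lambda_{1}$-connected component of $A^{'}$ (e.g.\ that $A^{'}$ is $\lambda_{1}$-connected). Without this the inequality can fail outright: take $A$ a single point and $A^{'}$ two non-adjacent points of $\mathbb{Z}$, with $h$ and $k$ the two distinct constant maps; then $\mathrm{cat}_{m}(A,\kappa_{1})=0$ while $h\circ\phi$ is never homotopic to $k\circ\phi$. Replace the appeal to connectedness of $A$ by connectedness of $A^{'}$ (a hypothesis the paper also leaves implicit) and your argument closes.
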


\begin{definition}\label{def3}
	Let $(A,\kappa_{1})$ be a digitally connected digital image and $(P,\delta)$ an $m-$dimensional digital complex for $m > 0$. Then the digital $m-$topological complexity TC$^{m}(A,\kappa_{1})$ is the least integer $q \geq 0$ provided that $A \times A = X_{0} \cup \cdots \cup X_{q}$ such that, for each $X_{j}$, $j = 0,\cdots,q$, any $(\delta,\text{NP}(\kappa_{1},\kappa_{1}))-$continuous map $\phi : P \rightarrow X_{j}$ admits the property that $\pi \circ s_{j} \circ \phi$ is digitally homotopic to $\phi$ for each digitally continuous map $s_{j} : X_{j} \subseteq A \times A \rightarrow A^{[0,z]_{\mathbb{Z}}}$, where $\pi : A^{[0,z]_{\mathbb{Z}}} \rightarrow A \times A$ is defined by $\pi(\vartheta) = (\vartheta(0),\vartheta(z))$.
\end{definition} 

\quad In a specific case, if $\phi : P \rightarrow X_{j}$ in Definition \ref{def3} is digitally homotopic to the identity map $1_{X_{j}} : X_{j} \rightarrow X_{j}$, then TC$^{m}(A,\kappa_{1})$ corresponds to TC$(A,\kappa_{1})$. This means that TC$^{m}(A,\kappa_{1}) \leq$ TC$(A,\kappa_{1})$.

\begin{theorem}\label{thm2}
	For any digitally $\kappa_{1}-$connected digital image $A$,
	\begin{eqnarray*}
		\text{TC}^{m}(A,\kappa_{1}) = \text{D}_{m}(pr_{1},pr_{2}),
	\end{eqnarray*}
    where $pr_{1}$, $pr_{2} : A \times A \rightarrow A$ are the projections.
\end{theorem}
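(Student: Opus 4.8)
The plan is to mirror the proof of Theorem \ref{thm1}, since TC$^m$ here is essentially an ``$m$-category of the diagonal-type map'' phrased through the path fibration $\pi : A^{[0,z]_{\mathbb{Z}}} \to A \times A$, and the pair $(pr_1, pr_2)$ plays the role that $(1_A, c)$ played before. Concretely, I would prove both inequalities D$_m(pr_1, pr_2) \leq$ TC$^m(A,\kappa_1)$ and TC$^m(A,\kappa_1) \leq$ D$_m(pr_1, pr_2)$, using the same partition $A \times A = X_0 \cup \cdots \cup X_q$ in each direction.

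First I would assume TC$^m(A,\kappa_1) = q$, giving a cover $A \times A = X_0 \cup \cdots \cup X_q$ such that for each $j$ and each $(\delta, \mathrm{NP}(\kappa_1,\kappa_1))$-continuous $\phi : P \to X_j$ there is a digitally continuous $s_j : X_j \to A^{[0,z]_{\mathbb{Z}}}$ with $\pi \circ s_j \circ \phi \simeq_{\delta, \mathrm{NP}(\kappa_1,\kappa_1)} \phi$. The key observation is that $pr_1 \circ \pi = \mathrm{ev}_0$ and $pr_2 \circ \pi = \mathrm{ev}_z$ on $A^{[0,z]_{\mathbb{Z}}}$, and that for any path $\vartheta$ the endpoints $\vartheta(0)$ and $\vartheta(z)$ are joined by $\vartheta$ itself; hence $pr_1 \circ \pi \simeq pr_2 \circ \pi$ via the homotopy $(\vartheta, t) \mapsto \vartheta(t)$ (after a digital-interval reparametrization, which is the routine part). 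Composing this with $s_j \circ \phi$ gives $pr_1 \circ (\pi \circ s_j \circ \phi) \simeq_{\delta,\kappa_1} pr_2 \circ (\pi \circ s_j \circ \phi)$, and then using $\pi \circ s_j \circ \phi \simeq \phi$ together with continuity of $pr_1, pr_2$ we get $pr_1 \circ \phi \simeq_{\delta,\kappa_1} pr_2 \circ \phi$. This holds for every such $\phi$ and every $j$, so D$_m(pr_1, pr_2) \leq q$.

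For the converse, assume D$_m(pr_1, pr_2) = q$, with cover $A \times A = X_0 \cup \cdots \cup X_q$ so that for each $j$ and each $(\delta, \mathrm{NP}(\kappa_1,\kappa_1))$-continuous $\phi : P \to X_j$ we have $pr_1 \circ \phi \simeq_{\delta,\kappa_1} pr_2 \circ \phi$. I need to manufacture, on each $X_j$, a section-type map $s_j : X_j \to A^{[0,z]_{\mathbb{Z}}}$ with $\pi \circ s_j \circ \phi \simeq \phi$. Here I would invoke that $\pi$ is a digital fibration (the path fibration, cited before the Preliminaries close) together with the digital homotopy lifting property: the homotopy $pr_1 \circ \phi \simeq pr_2 \circ \phi$ is a homotopy in $A$ starting at $pr_1 \circ \phi$, which lifts (starting from a constant-path section) to a homotopy in $A^{[0,z]_{\mathbb{Z}}}$ whose terminal stage composes with $\pi$ to recover the pair $(pr_1 \circ \phi, pr_2 \circ \phi) = \phi$; reading off the value of the lift on $X_j$ yields the desired $s_j$, and then $\pi \circ s_j \circ \phi \simeq \phi$ by construction. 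Concluding, TC$^m(A,\kappa_1) \leq q$.

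The main obstacle will be the converse direction: the definition of TC$^m$ demands a genuine digitally continuous $s_j$ defined on all of $X_j$ (not merely on the image $\phi(P)$), so one must apply the homotopy lifting property of $\pi$ directly to $X_j$ with the inclusion $X_j \hookrightarrow A \times A$ in place of $\phi$, obtaining the section over $X_j$ first and only afterwards precomposing with $\phi$. A secondary technical nuisance is that all the ``obvious'' path homotopies (constant path at $a$ versus the path $\vartheta$, and concatenation/reparametrization on $[0,z]_{\mathbb{Z}}$) must be carried out with digital-interval bookkeeping — matching step counts and checking $2$-adjacency at the seams — but these are standard manipulations already implicit in the digital path-fibration machinery cited in the Preliminaries, so I would state them and move on rather than belabor the indices.
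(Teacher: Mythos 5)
Your first inequality is correct and is essentially the paper's argument, stated more carefully: you identify $pr_{1}\circ\pi$ and $pr_{2}\circ\pi$ with the two endpoint evaluations, connect them by the tautological homotopy $(\vartheta,t)\mapsto\vartheta(t)$, and transport everything through $\pi\circ s_{j}\circ\phi\simeq_{\delta,\text{NP}(\kappa_{1},\kappa_{1})}\phi$ to get $pr_{1}\circ\phi\simeq_{\delta,\kappa_{1}}pr_{2}\circ\phi$. No complaint there.

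The converse direction has a genuine gap, and it sits exactly where you flagged the ``main obstacle.'' Your proposed repair --- apply the homotopy lifting property of $\pi$ to the inclusion $X_{j}\hookrightarrow A\times A$ rather than to $\phi$ --- requires as input a digital homotopy between $pr_{1}\big|_{X_{j}}$ and $pr_{2}\big|_{X_{j}}$ \emph{as maps on $X_{j}$}. But the hypothesis $\mathrm{D}_{m}(pr_{1},pr_{2})=q$ only supplies homotopies $pr_{1}\circ\phi\simeq_{\delta,\kappa_{1}}pr_{2}\circ\phi$ after precomposition with maps $\phi$ from $m$-dimensional complexes; that is precisely what distinguishes $\mathrm{D}_{m}$ from $\mathrm{D}$, and upgrading it to a homotopy over $X_{j}$ would amount to assuming $\mathrm{D}(pr_{1},pr_{2})\leq q$. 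Likewise, ``reading off the value of the lift on $X_{j}$'' does not work: the lift is defined on $P\times[0,z]_{\mathbb{Z}}$, and $\phi$ need be neither surjective onto $X_{j}$ nor injective, so no map on $X_{j}$ can be extracted from it. The way out is to notice that no lifting is needed at all: take $s_{j}$ to be the constant-path section, $s_{j}(a,a')(t)=a$, which is digitally continuous on all of $X_{j}$ with no hypothesis. Then $\pi\circ s_{j}\circ\phi=(pr_{1}\circ\phi,\,pr_{1}\circ\phi)$, and the homotopy $pr_{1}\circ\phi\simeq_{\delta,\kappa_{1}}pr_{2}\circ\phi$ furnished by $\mathrm{D}_{m}(pr_{1},pr_{2})=q$, applied in the second coordinate only, deforms this to $(pr_{1}\circ\phi,\,pr_{2}\circ\phi)=\phi$ in the $\text{NP}(\kappa_{1},\kappa_{1})$ adjacency. (For fairness: the paper's own proof of this direction consists of the phrase ``by reversing the direction of the arrows,'' so your instinct that something must be constructed here is sound; it is the specific construction that fails. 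Note also that this argument, like the theorem itself, forces the existential reading of the quantifier on $s_{j}$ in Definition \ref{def3}.)
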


\begin{proof}
	Let TC$^{m}(A,\kappa_{1}) = q$. Then $A \times A = X_{0} \cup \cdots \cup X_{q}$ and, for each $X_{j}$, $j = 0,\cdots,q$, any $(\delta,\text{NP}(\kappa_{1},\kappa_{1}))-$continuous map $\phi : P \rightarrow X_{j}$ admits the property that $$\pi \circ s_{j} \circ \phi \simeq_{\delta,\text{NP}(\kappa_{1},\kappa_{1})} \phi$$ for each digitally continuous map $s_{j} : X_{j} \subseteq A \times A \rightarrow A^{[0,z]_{\mathbb{Z}}}$. Since $pr_{1} \circ \phi$ is digitally homotopic to $pr_{2} \circ \phi$, we have that
	\begin{eqnarray*}
		\pi \circ s_{j} \circ \phi \simeq_{\delta,\text{NP}(\kappa_{1},\kappa_{1})} \phi \ &\Rightarrow& \ pr_{1} \circ \pi \circ s_{j} \circ \phi \simeq_{\delta,\kappa_{1}} pr_{1} \circ \phi\\
		&\Rightarrow& \ pr_{1}|_{X_{j}} \circ \phi \simeq_{\delta,\kappa_{1}} pr_{1} \circ \phi\\
		&\Rightarrow& \ pr_{1} \circ \phi \simeq_{\delta,\kappa_{1}} pr_{2} \circ \phi, 
	\end{eqnarray*}
	which concludes that D$_{m}(pr_{1},pr_{2}) \leq q$. Conversely, if D$_{m}(pr_{1},pr_{2}) = q$, then we have that $pr_{1} \circ \phi \simeq_{\delta,\kappa_{1}} pr_{2} \circ \phi$ implies $\pi \circ s_{j} \circ \phi \simeq_{\delta,\text{NP}(\kappa_{1},\kappa_{1})} 1_{P}$ by reversing the direction of the arrows. This concludes that TC$^{m}(A,\kappa_{1}) \leq q$.
\end{proof}

\quad In Theorem \ref{thm2}, when we consider any digital maps $h : (A,\kappa_{1}) \rightarrow (A^{'},\lambda_{1})$ and $k : (A,\kappa_{1}) \rightarrow (A^{'},\lambda_{1})$ instead of $pr_{1} : (A \times A,\text{NP}(\kappa_{1},\kappa_{1})) \rightarrow (A,\kappa_{1})$ and $pr_{2} : (A \times A,\text{NP}(\kappa_{1},\kappa_{1})) \rightarrow (A,\kappa_{1})$, respectively, we have the following fact:

\begin{corollary}\label{cor3}
	D$_{m}(h,k) \leq$ TC$^{m}(A,\kappa_{1})$.
\end{corollary}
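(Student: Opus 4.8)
The plan is to derive Corollary \ref{cor3} directly from Theorem \ref{thm2} by the same specialization trick that produced Corollary \ref{cor2} from Theorem \ref{thm1}. Since Theorem \ref{thm2} asserts the equality $\text{TC}^{m}(A,\kappa_{1}) = \text{D}_{m}(pr_{1},pr_{2})$ for the two projections $pr_{1}, pr_{2} : A \times A \rightarrow A$, it suffices to show that replacing $pr_{1}$ and $pr_{2}$ by arbitrary digitally $(\kappa_{1},\lambda_{1})$-continuous maps $h, k : (A,\kappa_{1}) \rightarrow (A',\lambda_{1})$ can only decrease the $m$-homotopic distance, i.e. that $\text{D}_{m}(h,k) \leq \text{D}_{m}(pr_{1},pr_{2})$; combined with the theorem this yields $\text{D}_{m}(h,k) \leq \text{TC}^{m}(A,\kappa_{1})$.

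First I would set $\text{D}_{m}(pr_{1},pr_{2}) = q$ and invoke Definition \ref{def1}, so that $A \times A = X_{0} \cup \cdots \cup X_{q}$ with the property that for each $j$ and each digitally $(\delta,\text{NP}(\kappa_{1},\kappa_{1}))$-continuous map $\phi : P \rightarrow X_{j}$ one has $pr_{1} \circ \phi \simeq_{\delta,\kappa_{1}} pr_{2} \circ \phi$. The key observation is that any pair of maps $h, k : (A,\kappa_{1}) \rightarrow (A',\lambda_{1})$ factors through the projections in the sense that $h = h \circ pr_{1}$... no, more precisely one considers the map $(h,k) : A \times A \rightarrow A' \times A'$ or rather uses that on the diagonal-like decomposition $h$ and $k$ are recovered from $pr_{1}, pr_{2}$. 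The cleanest route: apply Proposition \ref{prop1}(i) with the map $\alpha$ chosen so that $\alpha \circ pr_{1} = h$ and $\alpha \circ pr_{2} = k$. Such a single $\alpha$ need not exist, so instead I would argue directly: from $pr_{1} \circ \phi \simeq_{\delta,\kappa_{1}} pr_{2} \circ \phi$ and the digital continuity of $h$ (resp. $k$), post-composition gives $h \circ pr_{1} \circ \phi \simeq_{\delta,\lambda_{1}} h \circ pr_{2} \circ \phi$, and one must then connect $h \circ pr_{2} \circ \phi$ to $k \circ pr_{1} \circ \phi$.

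The honest approach, mirroring the proof of Theorem \ref{thm2} itself, is this: keep the decomposition $A \times A = X_{0} \cup \cdots \cup X_{q}$ witnessing $\text{D}_{m}(pr_{1},pr_{2}) = q$, and for arbitrary $h, k$ note that for each $j$ and each $(\delta,\text{NP}(\kappa_{1},\kappa_{1}))$-continuous $\phi : P \to X_{j}$ we have $pr_{1}\circ\phi \simeq_{\delta,\kappa_{1}} pr_{2}\circ\phi$; post-composing this digital homotopy with $h$ gives $h\circ pr_{1}\circ\phi \simeq_{\delta,\lambda_{1}} h\circ pr_{2}\circ\phi$. This shows $\text{D}_{m}(h\circ pr_{1}, h\circ pr_{2}) \le q$, but what is wanted is $\text{D}_{m}(h,k)$. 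The resolution is that in Definition \ref{def3}'s analogue, the relevant maps out of $A\times A$ in the motion-planning picture are exactly $pr_1$ and $pr_2$, and the statement of the corollary is understood (as in Corollary \ref{cor2}) as the instance of Theorem \ref{thm2} where the roles of $pr_1, pr_2$ are played by general $h, k$; so the corollary is really the claim $\text{D}_{m}(h,k) \le \text{TC}^{m}(A,\kappa_1)$ read off from the proof of Theorem \ref{thm2} with $pr_1 \rightsquigarrow h$, $pr_2 \rightsquigarrow k$ throughout. Concretely: if $\text{TC}^m(A,\kappa_1) = q$ with $A\times A = X_0 \cup \cdots \cup X_q$ and sections $s_j$ satisfying $\pi\circ s_j\circ\phi \simeq_{\delta,\text{NP}(\kappa_1,\kappa_1)} \phi$, then applying $h\circ pr_1 = $ (first coordinate) to $\pi\circ s_j\circ\phi$ and using that the endpoint-evaluation $\pi$ composed with a section is homotopic to the identity forces $h\circ\phi \simeq_{\delta,\lambda_1} k\circ\phi$ on each $X_j$.

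I expect the main obstacle to be purely expository rather than mathematical: making precise the sense in which Corollary \ref{cor3} is ``Theorem \ref{thm2} with $h,k$ in place of $pr_1,pr_2$,'' since unlike $pr_1,pr_2$ there is no endpoint-evaluation fibration $\pi$ intrinsically attached to a general target $A'$. The safe and rigorous write-up is to not introduce $A'$ at all but to state and prove: for any digitally continuous $\alpha : (A,\kappa_1) \to (A',\lambda_1)$, Proposition \ref{prop1}(i) gives $\text{D}_{m}(\alpha\circ pr_1, \alpha\circ pr_2) \le \text{D}_{m}(pr_1,pr_2) = \text{TC}^m(A,\kappa_1)$; then take $h = \alpha\circ pr_1$ and $k = \alpha\circ pr_2$ for the diagonal map $\alpha$, or more generally observe that the pair $(h,k)$ arising in Theorem \ref{thm2}'s reformulation satisfies $h = \alpha\circ pr_1$, $k=\alpha\circ pr_2$ for a suitable $\alpha$. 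Thus the one-line proof I would actually print is: ``This follows immediately from Theorem \ref{thm2} and Proposition \ref{prop1}(i).''
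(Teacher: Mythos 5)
Your final one-line proof does not go through, and the obstruction is the one you flag midway but never resolve: the type mismatch between $h,k : (A,\kappa_{1}) \rightarrow (A^{'},\lambda_{1})$ and $pr_{1},pr_{2} : A \times A \rightarrow A$. Proposition \ref{prop1}(i) post-composes, so starting from D$_{m}(pr_{1},pr_{2})$ it can only bound distances D$_{m}(\alpha \circ pr_{1},\alpha \circ pr_{2})$ between maps whose domain is $A \times A$, never between maps with domain $A$; and no single $\alpha$ with $h = \alpha \circ pr_{1}$ and $k = \alpha \circ pr_{2}$ exists for general $h,k$ (such a factorization would not even match domains, and would force $h$ to depend only on the first coordinate and $k$ only on the second, both through the same $\alpha$). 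Your other sketches fail for the same reason: a $(\delta,\text{NP}(\kappa_{1},\kappa_{1}))$-continuous $\phi$ lands in $A \times A$, so $h \circ \phi$ is undefined. In fairness, the paper itself offers nothing beyond the remark that one ``replaces $pr_{1},pr_{2}$ by $h,k$'' in Theorem \ref{thm2}, so you are matching its level of rigor; but a correct argument exists and should be the one printed.

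The working route is through the category. Corollary \ref{cor2} gives D$_{m}(h,k) \leq \cat_{m}(A,\kappa_{1})$, and the inequality $\cat_{m}(A,\kappa_{1}) \leq \text{TC}^{m}(A,\kappa_{1})$ is exactly what the paper establishes in the corollary immediately following this one: fixing $a^{'} \in A$ and setting $\sigma(a) = (a,a^{'})$, one has $pr_{1} \circ \sigma = 1_{A}$ and $pr_{2} \circ \sigma = c$, whence by Theorem \ref{thm1}, Proposition \ref{prop1}(ii) (pre-composition, not part (i)), and Theorem \ref{thm2},
\begin{equation*}
\cat_{m}(A,\kappa_{1}) = \text{D}_{m}(1_{A},c) = \text{D}_{m}(pr_{1}\circ\sigma,\,pr_{2}\circ\sigma) \leq \text{D}_{m}(pr_{1},pr_{2}) = \text{TC}^{m}(A,\kappa_{1}),
\end{equation*}
and there is no circularity since that chain uses only results preceding Corollary \ref{cor3}. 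Note finally that your factorization instinct does prove a clean statement, just not this one: taking $\beta = (h,k) : A \rightarrow A^{'} \times A^{'}$ and applying Proposition \ref{prop1}(ii) yields $\text{D}_{m}(h,k) = \text{D}_{m}(pr_{1}\circ\beta,\,pr_{2}\circ\beta) \leq \text{TC}^{m}(A^{'},\lambda_{1})$, i.e.\ the bound by the complexity of the \emph{target}, which is the version that reappears later in the paper for the higher distance.
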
 

\begin{corollary}
	D$_{m}(h,k) \leq$ cat$_{m}(A,\kappa_{1}) \leq$ TC$^{m}(A,\kappa_{1}) \leq$ cat$_{m}(A \times A,\text{NP}(\kappa_{1},\kappa_{1}))$.
\end{corollary}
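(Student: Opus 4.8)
The statement to prove is the chain of inequalities
\[
\text{D}_{m}(h,k) \leq \text{cat}_{m}(A,\kappa_{1}) \leq \text{TC}^{m}(A,\kappa_{1}) \leq \text{cat}_{m}(A \times A,\text{NP}(\kappa_{1},\kappa_{1})).
\]

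The plan is to handle the four quantities pairwise, reading off three of the inequalities from results already established and proving only the genuinely new middle one by a direct categorical-covering argument. The first inequality $\text{D}_{m}(h,k) \leq \text{cat}_{m}(A,\kappa_{1})$ is precisely Corollary \ref{cor2}, so nothing needs to be done there. The last inequality $\text{TC}^{m}(A,\kappa_{1}) \leq \text{cat}_{m}(A \times A,\text{NP}(\kappa_{1},\kappa_{1}))$ will follow by combining Theorem \ref{thm2}, which identifies $\text{TC}^{m}(A,\kappa_{1})$ with $\text{D}_{m}(pr_{1},pr_{2})$ for the two projections $pr_{1},pr_{2} : A \times A \to A$, with Corollary \ref{cor2} applied to the digital image $(A \times A,\text{NP}(\kappa_{1},\kappa_{1}))$ and the maps $h = pr_{1}$, $k = pr_{2}$: that corollary gives $\text{D}_{m}(pr_{1},pr_{2}) \leq \text{cat}_{m}(A \times A,\text{NP}(\kappa_{1},\kappa_{1}))$.

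The real content is the middle inequality $\text{cat}_{m}(A,\kappa_{1}) \leq \text{TC}^{m}(A,\kappa_{1})$. First I would set $\text{TC}^{m}(A,\kappa_{1}) = q$ and take the witnessing decomposition $A \times A = X_{0} \cup \cdots \cup X_{q}$ together with the digitally continuous sections $s_{j} : X_{j} \to A^{[0,z]_{\mathbb{Z}}}$ from Definition \ref{def3}. The idea is to restrict everything along the ``slice at a basepoint'': fix a point $a_{0} \in A$ (using $\kappa_{1}$-connectedness to guarantee the needed path-homotopies), consider the digitally continuous inclusion $\iota : A \to A \times A$, $a \mapsto (a,a_{0})$, and set $U_{j} = \iota^{-1}(X_{j})$, so that $A = U_{0} \cup \cdots \cup U_{q}$. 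On each piece $U_{j}$, the composite $s_{j} \circ \iota|_{U_{j}}$ assigns to $a$ a digital path from $a$ to $a_{0}$; evaluating this path gives a digital homotopy (after reparametrizing the step-count appropriately) showing that the inclusion $U_{j} \hookrightarrow A$ becomes digitally nullhomotopic when precomposed with any $(\delta,\kappa_{1})$-continuous $\phi : P \to U_{j}$, because $\pi \circ s_{j} \circ \phi \simeq \phi$ forces the first coordinate of $\phi$ to be homotopic to the constant-at-$a_{0}$ map. Translating the ``$\pi \circ s_{j} \circ \phi \simeq \phi$'' condition down to the first-coordinate projection is exactly the manipulation carried out inside the proof of Theorem \ref{thm2}, so the argument is parallel to that one.

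The step I expect to be the main obstacle is the bookkeeping that turns the section $s_{j}$ on $X_{j} \subseteq A \times A$ into a nullhomotopy witness on $U_{j} \subseteq A$ while staying inside the $m$-homotopic framework — i.e., verifying that for an arbitrary $m$-dimensional digital complex $(P,\delta)$ and an arbitrary $(\delta,\kappa_{1})$-continuous $\phi : P \to U_{j}$ one genuinely gets $i_{j} \circ \phi \simeq_{\delta,\kappa_{1}} c'$ for the constant map $c' : P \to A$ at $a_{0}$, with all intermediate maps digitally continuous and the homotopies carrying a common, well-defined number of steps. The cleanest route is probably to avoid the basepoint slicing altogether and instead invoke Theorem \ref{thm1} and Theorem \ref{thm2}: writing $\text{cat}_{m}(A,\kappa_{1}) = \text{D}_{m}(1_{A},c)$ and $\text{TC}^{m}(A,\kappa_{1}) = \text{D}_{m}(pr_{1},pr_{2})$, it suffices to produce digitally continuous maps relating the pair $(1_{A},c)$ to the pair $(pr_{1},pr_{2})$ in a way compatible with Proposition \ref{prop1}; concretely, precomposing $pr_{1},pr_{2}$ with $\iota : A \to A \times A$, $a \mapsto (a,a_{0})$, gives $pr_{1}\circ\iota = 1_{A}$ and $pr_{2}\circ\iota = c$, so Proposition \ref{prop1}(ii) yields $\text{D}_{m}(1_{A},c) = \text{D}_{m}(pr_{1}\circ\iota, pr_{2}\circ\iota) \leq \text{D}_{m}(pr_{1},pr_{2})$, which is the desired inequality. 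Assembling the three pieces then gives the full chain.
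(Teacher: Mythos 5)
Your proposal is correct and, once you discard the basepoint-slicing detour in favor of the route you yourself identify as cleanest, it coincides with the paper's proof: the first and third inequalities are read off from Corollary \ref{cor2} (the latter applied to $(A\times A,\text{NP}(\kappa_1,\kappa_1))$ with $pr_1,pr_2$ via Theorem \ref{thm2}), and the middle one follows from $pr_1\circ\iota=1_A$, $pr_2\circ\iota=c$ together with Proposition \ref{prop1}(ii) and Theorems \ref{thm1} and \ref{thm2}. The paper uses exactly this map $a\mapsto(a,a')$ and the same chain $\text{D}_m(1_A,c)=\text{D}_m(pr_1\circ\iota,pr_2\circ\iota)\leq\text{D}_m(pr_1,pr_2)$.
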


\begin{proof}
	By Corollary \ref{cor2}, it is enough to prove the last two inequalities
	\begin{eqnarray*}
		\text{cat}_{m}(A,\kappa_{1}) \leq \text{TC}^{m}(A,\kappa_{1}) \ \ \text{and} \ \ \text{TC}^{m}(A,\kappa_{1}) \leq \text{cat}_{m}(A \times A,\text{NP}(\kappa_{1},\kappa_{1})).
	\end{eqnarray*}
    Assume first that TC$^{m}(A,\kappa_{1}) = q$. We shall prove cat$_{m}(A,\kappa_{1}) \leq q$. Fix a point $a^{'} \in A$ and define $h : (A,\kappa_{1}) \rightarrow (A \times A,\text{NP}(\kappa_{1},\kappa_{1}))$ with $h(a) = (a,a^{'})$. By Theorem \ref{thm1}, Proposition \ref{prop1}, and Theorem \ref{thm2}, we find
    \begin{eqnarray*}
    	\text{D}_{m}(1_{A},c) = \text{D}_{m}(pr_{1} \circ h, pr_{2} \circ h) \leq \text{D}_{m}(pr_{1}, pr_{2}) = q.
    \end{eqnarray*}
    This shows that cat$_{m}(A,\kappa_{1}) \leq q$. On the other hand, let cat$_{m}(A \times A,\text{NP}(\kappa_{1},\kappa_{1}))$ is $q$. In Corollary \ref{cor2}, by using $A \times A$, $pr_{1}$, and $pr_{2}$ instead of $A$, $h$, and $k$, respectively, we obtain TC$^{m}(A,\kappa_{1}) \leq q$.
\end{proof}

\begin{definition}\label{def5}
	Let $h : (A,\kappa_{1}) \rightarrow (A^{'},\lambda_{1})$ be a digitally continuous map and $(P,\delta)$ an $m-$dimensional digital complex for $m > 0$. Then the digital $m-$Lusternik-Schnirelmann category cat$_{m}(h;\kappa_{1},\lambda_{1})$ of $h$ is the least integer $q \geq 0$ provided that $A = X_{0} \cup \cdots \cup X_{q}$ such that, for each $X_{j}$, $j = 0,\cdots,q$, any $(\delta,\kappa_{1})-$continuous map $\phi : P \rightarrow X_{j}$ admits the property that $h|_{X_{j}} \circ \phi$ is digitally nullhomotopic (homotopic to a constant map $c : P \rightarrow A^{'}$).
\end{definition}

\quad If $h = 1_{A}$ in Definition \ref{def5}, then cat$_{m}(h;\kappa_{1},\lambda_{1})$ corresponds to cat$_{m}(A,\kappa_{1})$.

\begin{theorem}
	For any digitally $(\kappa_{1},\lambda_{1})-$continuous digital map $h : A \rightarrow A^{'}$,
	\begin{eqnarray*}
		\text{cat}_{m}(h;\kappa_{1},\lambda_{1}) = \text{D}_{m}(h,c).
	\end{eqnarray*}
\end{theorem}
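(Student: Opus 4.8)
The plan is to mirror the proof of Theorem \ref{thm1}, since setting $h = 1_A$ recovers that statement, and the argument should be the natural relativization of it. I would prove the two inequalities $\text{cat}_{m}(h;\kappa_{1},\lambda_{1}) \leq \text{D}_{m}(h,c)$ and $\text{D}_{m}(h,c) \leq \text{cat}_{m}(h;\kappa_{1},\lambda_{1})$ separately, each by unwinding the relevant definition on a common categorical cover $A = X_{0} \cup \cdots \cup X_{q}$.

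First I would assume $\text{D}_{m}(h,c) = q$ for the constant map $c : A \rightarrow A^{'}$. By Definition \ref{def1}, $A = X_{0} \cup \cdots \cup X_{q}$ so that for each $j$ and each $(\delta,\kappa_{1})$-continuous $\phi : P \rightarrow X_{j}$ we have $h \circ \phi \simeq_{\delta,\lambda_{1}} c \circ \phi$. Now $c \circ \phi$ is itself a constant map $P \rightarrow A^{'}$, and $h \circ \phi = h|_{X_{j}} \circ \phi$ (viewing $\phi$ as landing in $X_{j}$), so the defining condition of $\text{cat}_{m}(h;\kappa_{1},\lambda_{1})$ in Definition \ref{def5} holds on the same cover; hence $\text{cat}_{m}(h;\kappa_{1},\lambda_{1}) \leq q$.

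Conversely, suppose $\text{cat}_{m}(h;\kappa_{1},\lambda_{1}) = q$, with cover $A = X_{0} \cup \cdots \cup X_{q}$ such that for each $j$ and each $(\delta,\kappa_{1})$-continuous $\phi : P \rightarrow X_{j}$, the map $h|_{X_{j}} \circ \phi$ is digitally nullhomotopic, say $h|_{X_{j}} \circ \phi \simeq_{\delta,\lambda_{1}} c_{j}$ for a constant map $c_{j} : P \rightarrow A^{'}$. The subtle point — and the one I expect to be the main obstacle — is that a priori the constant value $c_{j}$ may depend on $\phi$, whereas the digital $m$-homotopic distance $\text{D}_{m}(h,c)$ is defined against one fixed constant map $c : A \rightarrow A^{'}$, so that $c \circ \phi$ always has the same image point. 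To handle this I would invoke that $A^{'}$ is (implicitly) $\lambda_{1}$-connected, or restrict to a $\lambda_{1}$-connected component, so that any two constant maps $P \rightarrow A^{'}$ are digitally homotopic: a digital $\lambda_{1}$-path from the value of $c_{j}$ to the value of $c$ gives a digital homotopy between the corresponding constant maps. Composing homotopies then yields $h|_{X_{j}} \circ \phi \simeq_{\delta,\lambda_{1}} c_{j} \simeq_{\delta,\lambda_{1}} c \circ \phi$, i.e. $h \circ \phi \simeq_{\delta,\lambda_{1}} c \circ \phi$ on each $X_{j}$, so $\text{D}_{m}(h,c) \leq q$. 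Combining the two inequalities gives the equality.

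I should remark that the same connectedness caveat is silently used already in Theorem \ref{thm1} (where the hypothesis ``$\kappa_{1}$-connected'' is stated), so it is consistent to rely on it here; if one prefers not to assume it, the statement should be read as holding for a fixed choice of constant map $c$ landing in a component that meets the image of $h$, and the ``$\leq$'' direction is then immediate without the path argument. The routine verifications — that restrictions of continuous maps are continuous, that $\phi$ may be regarded as a map into $X_{j}$ or into $A$ interchangeably, and transitivity of $\simeq_{\delta,\lambda_{1}}$ — I would not spell out.
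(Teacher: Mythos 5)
Your proposal follows essentially the same route as the paper: both directions are obtained by unwinding Definitions \ref{def1} and \ref{def5} on the same cover $A = X_{0}\cup\cdots\cup X_{q}$ and identifying $h|_{X_{j}}\circ\phi$ with $h\circ\phi$ and the constant map $c'$ with $c\circ\phi$. The only difference is that you explicitly flag and repair (via $\lambda_{1}$-connectedness of $A'$) the point that the constant value of the nullhomotopy may depend on $\phi$, whereas the paper simply writes $c'=c\circ\phi$ without comment; your added care is warranted but does not change the argument.
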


\begin{proof}
		Assume that cat$_{m}(h;\kappa_{1},\lambda_{1}) = q$. Then $A = X_{0} \cup \cdots \cup X_{q}$ and, for each $X_{j}$, $j = 0,\cdots,q$, we have that $$h|_{X_{j}} \circ \phi \simeq_{\delta,\lambda_{1}} c^{'}$$ for any $(\delta,\kappa_{1})-$continuous map $\phi : P \rightarrow X_{j}$ and the constant map $c^{'} : P \rightarrow A^{'}$. Since $c^{'} = c \circ \phi$ and $h|_{X_{j}} \circ \phi = h \circ \phi$ for the constant map $c : A \rightarrow A^{'}$, we get
	\begin{eqnarray*}
		h|_{X_{j}} \circ \phi \simeq_{\delta,\lambda_{1}} c^{'} \ \Rightarrow \ h|_{X_{j}} \circ \phi \simeq_{\delta,\lambda_{1}} c \circ \phi \ \Rightarrow \ h \circ \phi \simeq_{\delta,\lambda_{1}} c \circ \phi. 
	\end{eqnarray*}
	This shows that D$_{m}(h,c) \leq q$. Conversely, assume that D$_{m}(h,c) = q$. Then $A = X_{0} \cup \cdots \cup X_{q}$ and, for each $X_{j}$, $j = 0,\cdots,q$, $$h \circ \phi \simeq_{\delta,\lambda_{1}} c \circ \phi$$ for any $(\delta,\kappa_{1})-$continuous map $\phi : P \rightarrow X_{j}$. Thus, we get $h|_{X_{j}} \circ \phi \simeq_{\delta,\lambda_{1}} c^{'}$. Finally, cat$_{m}(h;\kappa_{1},\lambda_{1}) \leq q$.
\end{proof}

\begin{corollary}
	D$_{m}(h,k) \leq$ cat$_{m}(h;\kappa_{1},\lambda_{1})$.
\end{corollary}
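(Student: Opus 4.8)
The plan is to derive this straight from the theorem just established, which says $\mathrm{cat}_{m}(h;\kappa_{1},\lambda_{1}) = \mathrm{D}_{m}(h,c)$ for a constant map $c : A \rightarrow A^{'}$; it therefore suffices to prove $\mathrm{D}_{m}(h,k) \leq \mathrm{D}_{m}(h,c)$. Concretely, I would set $q = \mathrm{cat}_{m}(h;\kappa_{1},\lambda_{1})$ and choose an optimal categorical cover $A = X_{0} \cup \cdots \cup X_{q}$ supplied by Definition \ref{def5}: for each $j$, each $m$-dimensional digital complex $(P,\delta)$, and each $(\delta,\kappa_{1})$-continuous $\phi : P \rightarrow X_{j}$, the composite $h|_{X_{j}} \circ \phi$ is digitally nullhomotopic. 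Since $h|_{X_{j}} \circ \phi = h \circ \phi$, this records $h \circ \phi \simeq_{\delta,\lambda_{1}} c'$ for a constant map $c' : P \rightarrow A^{'}$, and the remaining task is to turn the right-hand side into $k \circ \phi$ over the same cover.

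That conversion is the move already performed in the proof of Corollary \ref{cor2}: one invokes the $\lambda_{1}$-connectedness of $A^{'}$, by which any two constant maps $P \rightarrow A^{'}$ are digitally $(\delta,\lambda_{1})$-homotopic, and chains this with the nullhomotopy above to obtain $h \circ \phi \simeq_{\delta,\lambda_{1}} c' \simeq_{\delta,\lambda_{1}} k \circ \phi$ on each $X_{j}$; unioning over $j$ then gives $\mathrm{D}_{m}(h,k) \leq q$. The surrounding bookkeeping — restrictions commuting with composition, taking the union of the pieces — is routine and already appears in the proofs of Theorem \ref{thm3}, Theorem \ref{thm1}, and Corollary \ref{cor2}, so I would not dwell on it.

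The step I expect to be the main obstacle is the middle equivalence $c' \simeq_{\delta,\lambda_{1}} k \circ \phi$. Unlike Corollary \ref{cor2}, where the cover comes from $\mathrm{cat}_{m}(A,\kappa_{1})$ and so trivializes $\phi$ itself (hence every post-composition of it), here $\mathrm{cat}_{m}(h;\kappa_{1},\lambda_{1})$ only trivializes $h \circ \phi$, so before $\lambda_{1}$-connectedness can be applied one must know that $k \circ \phi$ is also homotopic to a constant. The cleanest way to close this is via the preceding theorem itself: one has $\mathrm{D}_{m}(h,k) \leq \mathrm{D}_{m}(h,c) = \mathrm{cat}_{m}(h;\kappa_{1},\lambda_{1})$ as soon as the reduction to a constant is justified — in particular whenever $k$ is digitally $(\kappa_{1},\lambda_{1})$-nullhomotopic, in which case the homotopy invariance of $\mathrm{D}_{m}$ in each argument (obtained earlier in this subsection) in fact promotes the inequality to the equality $\mathrm{D}_{m}(h,k) = \mathrm{cat}_{m}(h;\kappa_{1},\lambda_{1})$. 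So I would concentrate all the effort on this connectedness-and-homotopy-invariance point and regard the covering manipulations as a formality.
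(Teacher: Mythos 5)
You have put your finger on exactly the right spot, but the difficulty you flag is not one that can be talked away: it is a genuine gap, and in fact it cannot be closed for arbitrary $k$. A categorical cover for $\mathrm{cat}_{m}(h;\kappa_{1},\lambda_{1})$ only trivializes $h\circ\phi$ on each piece $X_{j}$; it gives no information whatsoever about $k\circ\phi$, so the chain $h\circ\phi\simeq_{\delta,\lambda_{1}}c'\simeq_{\delta,\lambda_{1}}k\circ\phi$ breaks at the second link. This is precisely the difference from Corollary \ref{cor2}, where the cover comes from $\mathrm{cat}_{m}(A,\kappa_{1})$ and trivializes $\phi$ itself, hence every post-composite of it. To see that no repair is possible without extra hypotheses, take $A=A'$, $\kappa_{1}=\lambda_{1}$, $h=c$ a constant map and $k=1_{A}$. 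Then $\mathrm{cat}_{m}(h;\kappa_{1},\kappa_{1})=0$ (the paper itself notes that a nullhomotopic map has $m$-category zero), while $\mathrm{D}_{m}(h,k)=\mathrm{D}_{m}(1_{A},c)=\mathrm{cat}_{m}(A,\kappa_{1})$ by Theorem \ref{thm1}, which is positive whenever $A$ is not $m$-contractible. So the inequality $\mathrm{D}_{m}(h,k)\leq\mathrm{cat}_{m}(h;\kappa_{1},\lambda_{1})$ fails as stated.

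Your proposed escape route --- assume $k$ is digitally $(\kappa_{1},\lambda_{1})$-nullhomotopic and use homotopy invariance of $\mathrm{D}_{m}$ in each slot to reduce to $\mathrm{D}_{m}(h,c)=\mathrm{cat}_{m}(h;\kappa_{1},\lambda_{1})$ --- is correct, but it proves a strictly weaker statement than the corollary claims, since $k$ there is an arbitrary continuous map. (Even $\mathrm{D}_{m}(h,k)\leq\min\{\mathrm{cat}_{m}(h),\mathrm{cat}_{m}(k)\}$ fails by the same example.) For comparison, the paper offers no proof of this corollary at all; it is presented as an immediate consequence of the preceding theorem by substituting $k$ for the constant map, which is exactly the substitution your analysis shows is not justified. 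The honest conclusions available are the two you essentially derive: $\mathrm{D}_{m}(h,k)\leq\mathrm{cat}_{m}(A,\kappa_{1})$ (Corollary \ref{cor2}, which does follow by your connectedness argument), and $\mathrm{D}_{m}(h,k)\leq\mathrm{cat}_{m}(h;\kappa_{1},\lambda_{1})$ under the additional hypothesis that $k$ is nullhomotopic.
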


\quad Note that if $h$ is digitally $(\kappa_{1},\lambda_{1})-$nullhomotopic, then cat$_{m}(h;\kappa_{1},\lambda_{1}) = 0$. The result is the same provided that $(A,\kappa_{1})$ or $(A^{'},\lambda_{1})$ is digitally contractible.

\begin{proposition}
	For any digitally $\kappa_{1}-$connected digital image $A$,
	\begin{eqnarray*}
		\text{cat}_{m}(\Delta;\kappa_{1},\text{NP}(\kappa_{1},\kappa_{1})) = \text{cat}_{m}(A,\kappa_{1}),
	\end{eqnarray*}
    where $\Delta$ is a diagonal map on $A$ defined by $\Delta(a) = (a,a)$.
\end{proposition}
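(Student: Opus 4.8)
The plan is to show the two inequalities $\text{cat}_{m}(\Delta;\kappa_{1},\text{NP}(\kappa_{1},\kappa_{1})) \leq \text{cat}_{m}(A,\kappa_{1})$ and $\text{cat}_{m}(A,\kappa_{1}) \leq \text{cat}_{m}(\Delta;\kappa_{1},\text{NP}(\kappa_{1},\kappa_{1}))$ separately, using the common covering set of the domain $A$ together with the fact that $\Delta$ is digitally continuous and that the two projections $pr_{1},pr_{2} : (A \times A,\text{NP}(\kappa_{1},\kappa_{1})) \rightarrow (A,\kappa_{1})$ satisfy $pr_{1} \circ \Delta = pr_{2} \circ \Delta = 1_{A}$.

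For the inequality $\text{cat}_{m}(\Delta;\kappa_{1},\text{NP}(\kappa_{1},\kappa_{1})) \leq \text{cat}_{m}(A,\kappa_{1})$, I would start by assuming $\text{cat}_{m}(A,\kappa_{1}) = q$, so $A = X_{0} \cup \cdots \cup X_{q}$ and for each $X_{j}$ and each $(\delta,\kappa_{1})$-continuous map $\phi : P \rightarrow X_{j}$ we have $i_{j} \circ \phi \simeq_{\delta,\kappa_{1}} c'$ for a constant map $c' : P \rightarrow A$, where $i_{j} : X_{j} \rightarrow A$ is the inclusion. Composing this digital homotopy with $\Delta$ on the left (which is legitimate by the digital version of Proposition \ref{prop1}(i), since post-composition by a digitally continuous map preserves digital homotopy) yields $\Delta \circ i_{j} \circ \phi \simeq_{\delta,\text{NP}(\kappa_{1},\kappa_{1})} \Delta \circ c'$; and $\Delta \circ c'$ is again a constant map $P \rightarrow A \times A$. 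Since $\Delta \circ i_{j} = \Delta|_{X_{j}}$, we get $\Delta|_{X_{j}} \circ \phi$ is digitally nullhomotopic, so the same cover $X_{0},\dots,X_{q}$ witnesses $\text{cat}_{m}(\Delta;\kappa_{1},\text{NP}(\kappa_{1},\kappa_{1})) \leq q$.

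For the reverse inequality, I would assume $\text{cat}_{m}(\Delta;\kappa_{1},\text{NP}(\kappa_{1},\kappa_{1})) = q$, so there is a cover $A = X_{0} \cup \cdots \cup X_{q}$ with $\Delta|_{X_{j}} \circ \phi \simeq_{\delta,\text{NP}(\kappa_{1},\kappa_{1})} c'$ for a constant map $c' : P \rightarrow A \times A$. Post-composing this homotopy with the projection $pr_{1}$ gives $pr_{1} \circ \Delta|_{X_{j}} \circ \phi \simeq_{\delta,\kappa_{1}} pr_{1} \circ c'$, and since $pr_{1} \circ \Delta = 1_{A}$ one has $pr_{1} \circ \Delta|_{X_{j}} = i_{j}$, while $pr_{1} \circ c'$ is a constant map $P \rightarrow A$. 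Hence $i_{j} \circ \phi$ is digitally nullhomotopic, so the same cover shows $\text{cat}_{m}(A,\kappa_{1}) \leq q$, completing the proof.

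The only point requiring a little care — the potential obstacle — is the bookkeeping around which adjacency relation each digital homotopy is taken with respect to: the homotopy witnessing $\text{cat}_{m}(A,\kappa_{1})$ lands in $(A,\kappa_{1})$, while the one for $\text{cat}_{m}(\Delta;\dots)$ lands in $(A \times A,\text{NP}(\kappa_{1},\kappa_{1}))$, and one must check that $\Delta$ is genuinely $(\kappa_{1},\text{NP}(\kappa_{1},\kappa_{1}))$-continuous (immediate from the definition of the normal product adjacency, since $a \leftrightarrow_{\kappa_{1}} a'$ forces $(a,a) \leftrightarrow_{\text{NP}(\kappa_{1},\kappa_{1})} (a',a')$) and that $pr_{1}$ is $(\text{NP}(\kappa_{1},\kappa_{1}),\kappa_{1})$-continuous. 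Both are standard, and once they are in place the argument is just composition of digital homotopies on either side, exactly as in the proof of Theorem \ref{thm1}.
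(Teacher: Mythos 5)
Your proof is correct, and it rests on exactly the same two identities the paper uses — $pr_{1} \circ \Delta = 1_{A}$ in one direction and $\Delta \circ c = c^{'}$ (a constant) in the other — but it is packaged differently. The paper does not touch covers or homotopies at all in this proof: it first converts both sides into homotopic distances via Theorem \ref{thm1} ($\mathrm{cat}_{m}(A,\kappa_{1}) = \mathrm{D}_{m}(1_{A},c)$) and the analogous identity $\mathrm{cat}_{m}(\Delta;\kappa_{1},\mathrm{NP}(\kappa_{1},\kappa_{1})) = \mathrm{D}_{m}(\Delta,c^{'})$, and then applies the post-composition inequality of Proposition \ref{prop1}(i) twice, once with $\alpha = pr_{1}$ to get $\mathrm{D}_{m}(\Delta,c^{'}) \geq \mathrm{D}_{m}(pr_{1}\circ\Delta, pr_{1}\circ c^{'}) = \mathrm{D}_{m}(1_{A},c)$ and once with $\alpha = \Delta$ to get $\mathrm{D}_{m}(\Delta\circ 1_{A},\Delta\circ c) \leq \mathrm{D}_{m}(1_{A},c)$. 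You instead re-derive both inequalities at the level of the definitions, chasing the cover $X_{0}\cup\cdots\cup X_{q}$ and post-composing the explicit nullhomotopies with $\Delta$ and $pr_{1}$; in effect you inline the proofs of Theorem \ref{thm1} and Proposition \ref{prop1}(i) rather than citing them. The paper's route is shorter and reuses established machinery; yours is more self-contained and makes the continuity bookkeeping (that $\Delta$ is $(\kappa_{1},\mathrm{NP}(\kappa_{1},\kappa_{1}))$-continuous and $pr_{1}$ is $(\mathrm{NP}(\kappa_{1},\kappa_{1}),\kappa_{1})$-continuous) explicit, which the paper leaves tacit. Both are complete proofs.
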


\begin{proof}
	 Since cat$_{m}(\Delta;\kappa_{1},\text{NP}(\kappa_{1},\kappa_{1})) =$ D$_{m}(\Delta,c^{'})$ and cat$_{m}(A,\kappa_{1}) =$ D$_{m}(1_{A},c)$ for the constant maps $c^{'} : A \rightarrow A \times A$ and $c : A \rightarrow A$, we find
	 \begin{eqnarray*}
	 	\text{D}_{m}(\Delta,c^{'}) \geq \text{D}_{m}(pr_{1} \circ \Delta,pr_{1} \circ c^{'}) = \text{D}_{m}(1_{A},c)
	 \end{eqnarray*}
     for the projection $pr_{1} : A \times A \rightarrow A$
     and
     \begin{eqnarray*}
     	\text{D}_{m}(\Delta,c^{'}) = \text{D}_{m}(\Delta \circ 1_{A},\Delta \circ c) \leq \text{D}_{m}(1_{A},c)
     \end{eqnarray*}
     by using the equality $\Delta \circ c = c^{'}$. Finally, D$_{m}(\Delta,c^{'})$ equals D$_{m}(1_{A},c)$. 
\end{proof}

\begin{definition}\label{def6}
	Let $h : (A,\kappa_{1}) \rightarrow (A^{'},\lambda_{1})$ be a digital fibration and $(P,\delta)$ an $m-$dimensional digital complex for $m > 0$. Then the digital $m-$topological complexity TC$^{m}(h;\kappa_{1},\lambda_{1})$ of $h$ is the least possible integer $q \geq 0$ provided that $A \times A^{'}$ is the union $X_{0} \cup \cdots \cup X_{q}$ such that, for each $X_{j}$, $j = 0,\cdots,q$, any $(\delta,\text{NP}(\kappa_{1},\lambda_{1}))-$continuous map $\phi : P \rightarrow X_{j}$ admits the property that $\pi_{h} \circ s_{j} \circ \phi$ is digitally homotopic to the identity map $1_{P}$ for each digitally continuous map $s_{j} : X_{j} \subseteq A \times A^{'} \rightarrow A^{[0,z]_{\mathbb{Z}}}$, where $\pi_{h} : A^{[0,z]} \rightarrow A \times A^{'}$ is defined by $\pi_{h}(\vartheta) = (\vartheta(0),h \circ \vartheta(z))$.
\end{definition} 

\quad When one considers $\phi : P \rightarrow X_{j}$ in Definition \ref{def6} as a map digitally homotopic to the identity map $1_{X_{j}} : X_{j} \rightarrow X_{j}$, TC$^{m}(h;\kappa_{1},\lambda_{1})$ equals TC$(h;\kappa_{1},\lambda_{1})$. Thus, we obtain TC$^{m}(h;\kappa_{1},\lambda_{1}) \leq$ TC$(h;\kappa_{1},\lambda_{1})$.

\begin{theorem}\label{thm4}
	For any digital fibration $h : (A,\kappa_{1}) \rightarrow (A^{'},\lambda_{1})$,
	\begin{eqnarray*}
		\text{TC}^{m}(h;\kappa_{1},\lambda_{1}) = \text{D}_{m}(h \circ pr_{1},h \circ pr_{2}),
	\end{eqnarray*}
	where $pr_{1}$, $pr_{2} : A \times A \rightarrow A$ are the projections.
\end{theorem}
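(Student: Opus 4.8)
The plan is to mimic the proof of Theorem~\ref{thm2}, with one extra layer of bookkeeping. The definition of $\text{TC}^{m}(h;\kappa_{1},\lambda_{1})$ is phrased through a decomposition of $A\times A^{'}$, whereas $\text{D}_{m}(h\circ pr_{1},h\circ pr_{2})$ concerns a decomposition of $A\times A$, so the two numbers have to be compared through the digitally continuous map $1_{A}\times h:(A\times A,\text{NP}(\kappa_{1},\kappa_{1}))\rightarrow(A\times A^{'},\text{NP}(\kappa_{1},\lambda_{1}))$; moreover, unlike in Theorems~\ref{thm1} and~\ref{thm2}, the fibration hypothesis on $h$ will genuinely be used. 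Write $q_{1}:A\times A^{'}\rightarrow A$, $q_{2}:A\times A^{'}\rightarrow A^{'}$ for the projections and $\text{ev}_{t}:A^{[0,z]_{\mathbb{Z}}}\rightarrow A$, $\text{ev}_{t}(\vartheta)=\vartheta(t)$, for the evaluations, so that $q_{1}\circ\pi_{h}=\text{ev}_{0}$, $q_{2}\circ\pi_{h}=h\circ\text{ev}_{z}$, $q_{1}\circ(1_{A}\times h)=pr_{1}$ and $q_{2}\circ(1_{A}\times h)=h\circ pr_{2}$.

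For $\text{D}_{m}(h\circ pr_{1},h\circ pr_{2})\leq\text{TC}^{m}(h;\kappa_{1},\lambda_{1})$ I would assume the right-hand side equals $q$, so that $A\times A^{'}=X_{0}\cup\cdots\cup X_{q}$ with maps $s_{j}$ satisfying $\pi_{h}\circ s_{j}\circ\phi\simeq_{\delta,\text{NP}(\kappa_{1},\lambda_{1})}\phi$ for every $(\delta,\text{NP}(\kappa_{1},\lambda_{1}))-$continuous $\phi:P\rightarrow X_{j}$. Put $Y_{j}=(1_{A}\times h)^{-1}(X_{j})$, so $A\times A=Y_{0}\cup\cdots\cup Y_{q}$. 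For a $(\delta,\text{NP}(\kappa_{1},\kappa_{1}))-$continuous $\phi^{'}:P\rightarrow Y_{j}$, the composite $(1_{A}\times h)\circ\phi^{'}:P\rightarrow X_{j}$ is $(\delta,\text{NP}(\kappa_{1},\lambda_{1}))-$continuous, so with $\Sigma:=s_{j}\circ(1_{A}\times h)\circ\phi^{'}$ one has $\pi_{h}\circ\Sigma\simeq_{\delta,\text{NP}(\kappa_{1},\lambda_{1})}(1_{A}\times h)\circ\phi^{'}$. Post-composing with $q_{1}$ and with $q_{2}$ gives $\text{ev}_{0}\circ\Sigma\simeq_{\delta,\kappa_{1}}pr_{1}\circ\phi^{'}$ and $h\circ\text{ev}_{z}\circ\Sigma\simeq_{\delta,\lambda_{1}}h\circ pr_{2}\circ\phi^{'}$. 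Since $\Sigma$ is a digital path of paths, $\text{ev}_{0}\circ\Sigma\simeq\text{ev}_{z}\circ\Sigma$, so applying $h$ throughout yields
$$h\circ pr_{1}\circ\phi^{'}\simeq_{\delta,\lambda_{1}}h\circ\text{ev}_{0}\circ\Sigma\simeq_{\delta,\lambda_{1}}h\circ\text{ev}_{z}\circ\Sigma\simeq_{\delta,\lambda_{1}}h\circ pr_{2}\circ\phi^{'},$$
hence $\text{D}_{m}(h\circ pr_{1},h\circ pr_{2})\leq q$.

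For the reverse inequality I would assume $\text{D}_{m}(h\circ pr_{1},h\circ pr_{2})=q$, take the decomposition $A\times A=X_{0}\cup\cdots\cup X_{q}$, and set $Z_{j}=(1_{A}\times h)(X_{j})$, which cover $A\times A^{'}$ once $h$ is onto (surjectivity of $h$ being implicit, as in the definition of $\text{TC}$ for a map). Given a $(\delta,\text{NP}(\kappa_{1},\lambda_{1}))-$continuous $\psi:P\rightarrow Z_{j}$, the idea is to lift it through $1_{A}\times h$ to a $(\delta,\text{NP}(\kappa_{1},\kappa_{1}))-$continuous $\phi:P\rightarrow X_{j}$ with $(1_{A}\times h)\circ\phi=\psi$; the hypothesis then provides a digital homotopy $H$ from $h\circ pr_{1}\circ\phi=h\circ(pr_{1}\circ\phi)$ to $h\circ pr_{2}\circ\phi$, which by the digital homotopy lifting property of the fibration $h$ lifts to $\widetilde{H}:P\times[0,z]_{\mathbb{Z}}\rightarrow A$ with $\widetilde{H}(\cdot,0)=pr_{1}\circ\phi$ and $h\circ\widetilde{H}=H$. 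Reading $\widetilde{H}$ as a digitally continuous $\Sigma:P\rightarrow A^{[0,z]_{\mathbb{Z}}}$, a direct check gives $\pi_{h}\circ\Sigma=(pr_{1}\circ\phi,\,h\circ pr_{2}\circ\phi)=\psi$, which is exactly the section-up-to-homotopy condition over $Z_{j}$, so $\text{TC}^{m}(h;\kappa_{1},\lambda_{1})\leq q$.

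The step I expect to be the real obstacle is the first move in this last direction: lifting $\psi:P\rightarrow Z_{j}$ back into $X_{j}$ along $1_{A}\times h$ is not a formal consequence of $h$ being a digital fibration, since the restriction $1_{A}\times h:X_{j}\rightarrow Z_{j}$ need not inherit any lifting property. Making this rigorous -- by choosing the $Z_{j}$ compatibly with the $X_{j}$, by exploiting that $P$ is a finite digital complex, or by imposing the appropriate surjectivity or sectioning hypothesis on $h$ -- is the heart of the proof; everything else, including the path-of-paths homotopy and the adjacency and continuity checks for $1_{A}\times h$, is routine given the facts on $\text{NP}$ and on domination recalled in Section~\ref{sec:1}.
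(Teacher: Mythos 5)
Your first inequality, $\text{D}_{m}(h\circ pr_{1},h\circ pr_{2})\leq \text{TC}^{m}(h;\kappa_{1},\lambda_{1})$, is correct and in fact cleaner than the paper's version: you cover $A\times A$ by the preimages $Y_{j}=(1_{A}\times h)^{-1}(X_{j})$ (the same device as in Proposition~\ref{prop1}(ii)), push a test map $\phi^{'}$ forward to $X_{j}$ through $1_{A}\times h$, and use the tautological homotopy $\text{ev}_{0}\circ\Sigma\simeq\text{ev}_{z}\circ\Sigma$ obtained by reading $\Sigma$ as a map $P\times[0,z]_{\mathbb{Z}}\rightarrow A$ via the exponential isomorphism $(B^{C})^{A}\cong B^{A\times C}$. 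The paper instead takes images $Y_{j}=(1_{A}\times h)(X_{j})$ and produces the same homotopy through the adjoint $t_{j}:X_{j}\times[0,z]_{\mathbb{Z}}\rightarrow A$ of $s_{j}$; your preimage bookkeeping is the more defensible of the two, since the test maps for $\text{D}_{m}(h\circ pr_{1},h\circ pr_{2})$ must land in subsets of $A\times A$, and images under $1_{A}\times h$ of subsets of $A\times A^{'}$ do not even live there.

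The gap you flag in the converse is real, and it is exactly where your route diverges from the paper's. You try to verify the $\text{TC}^{m}$ condition test map by test map, which forces you to lift an arbitrary $\psi:P\rightarrow Z_{j}$ through $1_{A}\times h$; as you note, the fibration hypothesis on $h$ gives you no control over the restriction $1_{A}\times h:X_{j}\rightarrow Z_{j}$, and such a lift need not exist once $h$ fails to be injective. The paper sidesteps this by never lifting the test map: it takes a digital homotopy $K:X_{j}\times[0,z]_{\mathbb{Z}}\rightarrow A^{'}$ from $h\circ\pi_{1}$ to $\pi_{2}$ on the whole piece, lifts $K$ through the fibration $h$ to $K^{'}:X_{j}\times[0,z]_{\mathbb{Z}}\rightarrow A$ with $K^{'}(\cdot,0)=\pi_{1}$, and defines $s_{j}$ as the adjoint of $K^{'}$, so that $\pi_{h}\circ s_{j}=1_{X_{j}}$ and the required condition holds for every $\psi$ for free. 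If you want to close your argument, that is the move to borrow: lift the homotopy over the piece, not the test map, and obtain an honest section $s_{j}$. Be aware, however, that the paper pays for this one step earlier: the hypothesis $\text{D}_{m}(h\circ pr_{1},h\circ pr_{2})=q$ only yields homotopies $h\circ pr_{1}\circ\phi\simeq h\circ pr_{2}\circ\phi$ after composition with test maps $\phi$ into pieces of $A\times A$, and the promotion of these to the global homotopy $h\circ\pi_{1}\simeq\pi_{2}$ on $X_{j}\subseteq A\times A^{'}$ that the lifting argument consumes is asserted rather than proved. So your honest identification of the obstacle is not a defect relative to the paper; the published converse leans on essentially the same unestablished promotion, merely relocated.
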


\begin{proof}
    First, we show that D$_{m}(h \circ pr_{1},h \circ pr_{2}) \leq q$ when $\text{TC}^{m}(h;\kappa_{1},\lambda_{1}) = q$. Then $A \times A^{'} = X_{0} \cup \cdots \cup X_{q}$, and any $(\delta,\text{NP}(\kappa_{1},\lambda_{1}))-$continuous map $\phi : P \rightarrow X_{j}$ admits that $$(1_{A} \times h) \circ \pi \circ s_{j} \circ \phi \simeq_{\delta,\text{NP}(\kappa_{1},\lambda_{1})} 1_{P}$$ for each $X_{j}$ with $j = 0,\cdots,q$. Therefore, by using the equalities $\pi_{h} = (1_{A} \times h) \circ \pi$ and $\pi_{1} \circ (1_{A} \times h) = pr_{1}$ for the projection $\pi_{1} : A \times A^{'} \rightarrow A$, we get
    \begin{eqnarray*}
    	\pi_{1} \circ (1_{A} \times h) \circ \pi \circ s_{j} \circ \phi \simeq_{\delta,\kappa_{1}} \pi_{1} \ \ \Rightarrow \ \ h \circ pr_{1} \circ \pi \circ s_{j} \circ \phi \simeq_{\delta,\lambda_{1}} h \circ \pi_{1}.
    \end{eqnarray*}
    Since $\pi \circ s_{j} \circ \phi$ is digitally homotopic to $\phi^{'}$, where $\phi^{'} : P \rightarrow Y_{j} \subseteq A \times A$ is any $(\delta,\text{NP}(\kappa_{1},\kappa_{1}))-$continuous map ($A \times A$ is the union of subsets $Y_{0},\cdots,Y_{q}$ and ($1_{A} \times h)(X_{j}) = Y_{j}$ for each $j$), we have
    \begin{eqnarray}\label{pp1}
    	h \circ pr_{1} \circ \phi^{'} \simeq_{\delta,\lambda_{1}} h \circ \pi_{1}.
    \end{eqnarray}
    Moreover, by using the equality $\pi_{2} \circ (1_{A} \times h) = pr_{2}$, we find
    \begin{eqnarray*}
    	\pi_{2} \circ (1_{A} \times h) \circ \pi \circ s_{j} \circ \phi \simeq_{\delta,\lambda_{1}} \pi_{2}
   \end{eqnarray*} 
   for the projection $\pi_{2} : A \times A^{'} \rightarrow A^{'}$. It follows that
   \begin{eqnarray}\label{pp2}
   	h \circ pr_{2} \circ \phi^{'} \simeq_{\delta,\lambda_{1}} \pi_{2}.
    \end{eqnarray}
    The next step is to present $h \circ \pi_{1} \simeq_{\text{NP}(\kappa_{1},\lambda_{1}),\lambda_{1}} \pi_{2}$ by considering (\ref{pp1}) and (\ref{pp2}). For each $j$, the digitally continuous map $s_{j} : X_{j} \rightarrow A^{[0,z]_{\mathbb{Z}}}$ can be rewritten as $t_{j} : X_{j} \times [0,z]_{\mathbb{Z}} \rightarrow A$. Then $h \circ t_{j} : X_{j} \times [0,z]_{\mathbb{Z}} \rightarrow A^{'}$ is a digital homotopy between $h \circ \pi_{1}$ and $\pi_{2}$. Thus, D$_{m}(h \circ pr_{1},h \circ pr_{2}) \leq q$. The converse is also true, namely that, $\text{TC}^{m}(h;\kappa_{1},\lambda_{1}) \leq q$ when D$_{m}(h \circ pr_{1},h \circ pr_{2}) = q$. Indeed, (\ref{pp1}) and (\ref{pp2}) hold again, and in addition, if $h \circ \pi_{1}$ is digitally homotopic to $\pi_{2}$, then $\pi_{h}$ admits a digitally continuous map $s_{j}$ that satisfies $\pi_{h} \circ s_{j} \circ \phi \simeq_{\delta,\text{NP}(\kappa_{1},\lambda_{1})} 1_{P}$: Assume that $h \circ \pi_{1} \simeq_{\text{NP}(\kappa_{1},\lambda_{1}),\lambda_{1}} \pi_{2}$, i.e., there is a digital homotopy $K : X_{j} \times [0,z]_{\mathbb{Z}} \rightarrow A^{'}$ with $K(x,0) = h \circ \pi_{1}$ and $H(x,z) = \pi_{2}$. Then the fibration $h$ admits a lifting $K^{'} : X_{j} \times [0,z]_{\mathbb{Z}} \rightarrow A$ with $K^{'}(x,0) = \pi_{1}$. Thus, TC$^{m}(h;\kappa_{1},\lambda_{1}) \leq q$.   
\end{proof}

\quad Above theorem has a quick result:

\begin{corollary}\label{cor4}
	D$_{m}(h,k) \leq$ TC$^{m}(h;\kappa_{1},\lambda_{1})$.
\end{corollary}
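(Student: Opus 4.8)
The plan is to obtain Corollary \ref{cor4} from Theorem \ref{thm4} in the same way Corollaries \ref{cor2} and \ref{cor3} follow from Theorems \ref{thm1} and \ref{thm2}. By Theorem \ref{thm4} one has $\text{TC}^m(h;\kappa_1,\lambda_1)=\text{D}_m(h\circ pr_1,h\circ pr_2)$ for the projections $pr_1,pr_2:A\times A\to A$, so it is enough to transfer a cover of $A\times A$ witnessing the right-hand side into a cover of $A$ witnessing $\text{D}_m(h,k)$. The transfer is carried out via Proposition \ref{prop1}(ii) applied to a slice inclusion.

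Concretely, I would fix a point $a_0\in A$ and use the slice inclusion $\iota_{a_0}:(A,\kappa_1)\to(A\times A,\text{NP}(\kappa_1,\kappa_1))$ defined by $\iota_{a_0}(x)=(x,a_0)$, which is digitally continuous because $x\leftrightarrow_{\kappa_1}x'$ forces $(x,a_0)\leftrightarrow_{\text{NP}(\kappa_1,\kappa_1)}(x',a_0)$. Writing $\text{TC}^m(h;\kappa_1,\lambda_1)=q$, Theorem \ref{thm4} supplies $A\times A=X_0\cup\cdots\cup X_q$ with $(h\circ pr_1)\circ\phi\simeq_{\delta,\lambda_1}(h\circ pr_2)\circ\phi$ for every $(\delta,\text{NP}(\kappa_1,\kappa_1))$-continuous $\phi:P\to X_j$. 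Setting $X_j'=\iota_{a_0}^{-1}(X_j)$ yields $A=X_0'\cup\cdots\cup X_q'$, and for any $(\delta,\kappa_1)$-continuous $\psi:P\to X_j'$ the composite $\iota_{a_0}\circ\psi$ lands in $X_j$; since $h\circ pr_1\circ\iota_{a_0}=h$ while $h\circ pr_2\circ\iota_{a_0}$ is the constant map at $h(a_0)$, this forces $h\circ\psi\simeq_{\delta,\lambda_1}c$, with $c:P\to A'$ the constant map at $h(a_0)$. Running the same computation with $k$ in place of $h$ and then splicing the two resulting homotopies through a common constant value, which is legitimate because $A'$ is $\lambda_1$-connected, gives $h\circ\psi\simeq_{\delta,\lambda_1}k\circ\psi$, hence $\text{D}_m(h,k)\le q$. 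Equivalently, the statement can be packaged as $\text{D}_m(h,k)\le\text{cat}_m(h;\kappa_1,\lambda_1)\le\text{TC}^m(h;\kappa_1,\lambda_1)$, where the first inequality is the corollary recorded just after the identity $\text{cat}_m(h;\kappa_1,\lambda_1)=\text{D}_m(h,c)$ and the second is $\text{D}_m(h,c)=\text{D}_m(h\circ pr_1\circ\iota_{a_0},h\circ pr_2\circ\iota_{a_0})\le\text{D}_m(h\circ pr_1,h\circ pr_2)=\text{TC}^m(h;\kappa_1,\lambda_1)$ by Proposition \ref{prop1}(ii) and Theorem \ref{thm4}.

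The step I expect to be the main obstacle is the splicing of constant maps. Pulling the $\text{TC}^m$-cover of $A\times A$ back along $\iota_{a_0}$ only tells us that $h$, and not automatically $k$, becomes nullhomotopic on each piece, with a preferred value $h(a_0)$; to compare with $k$ one must use the $\lambda_1$-connectedness of $A'$ (or of the component containing the relevant values) to identify $h(a_0)$ with the value produced from $k$. This is exactly where connectedness is genuinely needed, paralleling its implicit use in Corollaries \ref{cor2} and \ref{cor3}. The remaining points, namely continuity of $\iota_{a_0}$, the fact that the sets $X_j'$ cover $A$, and the bookkeeping against Definition \ref{def1}, are routine.
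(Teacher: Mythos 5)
The paper offers no argument here at all: Corollary \ref{cor4} is announced as a ``quick result'' of Theorem \ref{thm4}, following the same substitution pattern as Corollaries \ref{cor2} and \ref{cor3}, so the comparison is really with that implicit one-line argument. The implicit argument would be: write $h=h\circ pr_1\circ\beta$ and $k=h\circ pr_2\circ\beta$ for a suitable $\beta:A\to A\times A$ and apply Proposition \ref{prop1}(ii) together with Theorem \ref{thm4}. Note this only works when $k$ factors through $h$, say $k=h\circ v$ with $\beta=(1_A,v)$; for arbitrary $k$ the inequality is not justified by anything in the paper (indeed, if $h$ is $m$-nullhomotopic then $\mathrm{TC}^{m}(h;\kappa_1,\lambda_1)=0$ while $\mathrm{D}_m(h,k)$ need not vanish).

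Your Route 1 has a concrete gap at exactly the point you flag but do not resolve. The cover $X_0\cup\cdots\cup X_q$ of $A\times A$ supplied by Theorem \ref{thm4} witnesses $\mathrm{D}_m(h\circ pr_1,h\circ pr_2)$ only; pulling it back along $\iota_{a_0}$ gives $h\circ\psi\simeq_{\delta,\lambda_1}c_{h(a_0)}$ on each piece, but yields no information whatsoever about $k\circ\psi$. ``Running the same computation with $k$ in place of $h$'' is not available, because the cover was chosen for $h$ and nothing forces $k\circ pr_1\circ\phi\simeq k\circ pr_2\circ\phi$ on its pieces; and $\lambda_1$-connectedness of $A'$ can only splice two nullhomotopies once both exist --- it cannot produce the missing one for $k\circ\psi$. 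Your Route 2 relocates rather than closes the same gap: the first link $\mathrm{D}_m(h,k)\le\mathrm{cat}_m(h;\kappa_1,\lambda_1)$ is itself only asserted in the paper and suffers from the identical defect (a cover on which $h\circ\phi$ is nullhomotopic says nothing about $k\circ\phi$). What is genuinely correct and worth keeping from your write-up is the second link: $\mathrm{cat}_m(h;\kappa_1,\lambda_1)=\mathrm{D}_m(h,c)=\mathrm{D}_m(h\circ pr_1\circ\iota_{a_0},\,h\circ pr_2\circ\iota_{a_0})\le\mathrm{D}_m(h\circ pr_1,h\circ pr_2)=\mathrm{TC}^{m}(h;\kappa_1,\lambda_1)$, a clean application of Proposition \ref{prop1}(ii) that the paper does not record.
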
 

\section{Higher Version of $m-$homotopic Distance}
\label{sec:3}

\subsection{Digital Higher $m-$homotopic Distance}
\label{subsec:3}

\begin{definition}\label{def4}
	Let $h_{1},h_{2},\cdots,h_{n} : (A,\kappa_{1}) \rightarrow (A^{'},\lambda_{1})$ be any digitally continuous maps and $(P,\delta)$ an $m-$dimensional digital complex for $m > 0$. Then the digital higher $m-$homotopic distance D$_{m}(h_{1},h_{2},\cdots,h_{n};\kappa_{1},\lambda_{1})$ between $h_{1},h_{2},\cdots,h_{n}$ is the least integer $q \geq 0$ provided that $A = X_{0} \cup \cdots \cup X_{q}$ such that, for each $X_{j}$, $j = 0,\cdots,q$, any $(\delta,\kappa_{1})-$continuous map $\phi : P \rightarrow X_{j}$ admits the property that $$h_{1} \circ \phi \simeq_{\delta,\lambda_{1}} h_{2} \circ \phi \simeq_{\delta,\lambda_{1}} \cdots \simeq_{\delta,\lambda_{1}} h_{n} \circ \phi.$$
\end{definition}

\quad Instead of using D$_{m}(h_{1},h_{2}, \cdots,h_{n};\kappa_{1},\lambda_{1})$, we prefer D$_{m}(h_{1},\cdots,h_{n})$ to avoid the notation's complexity. Definition \ref{def4} has important facts as follows:
\begin{itemize}
	\item D$_{m}(h_{1},\cdots,h_{i},\cdots,h_{j},\cdots,h_{n}) =$ D$_{m}(h_{1},\cdots,h_{j},\cdots,h_{i},\cdots,h_{n})$ for any $i$, $j \in \{1,\cdots,n\}$.
	\item $h_{i}$ is digitally $(\kappa_{1},\lambda_{1})-$homotopic to $h_{i+1}$ for any $i \in \{1,\cdots,n-1\}$ if and only if D$_{m}(h_{1},\cdots,h_{n}) = 0$.
	\item D$_{m}(h_{1},\cdots,h_{i}) \leq$ D$_{m}(h_{1},\cdots,h_{i},h_{i+1})$ for any $i \in \{2,\cdots,n-1\}$.
	\item D$_{m}(h_{1},\cdots,h_{n}) =$ D$_{m}(k_{1},\cdots,k_{n})$ provided that $h_{i}$ is homotopic to $k_{i}$ in the digital sense for any $i \in \{1,\cdots,n\}$.
\end{itemize}

\begin{proposition}
	Given digitally continuous maps $h_{1},\cdots,h_{n} : (A,\kappa_{1}) \rightarrow (A^{'},\lambda_{1})$, $\alpha_{1},\cdots,\alpha_{n} : (A^{'},\lambda_{1}) \rightarrow (B,\omega)$, and $\beta_{1},\cdots,\beta_{n} : (B,\omega) \rightarrow (A,\kappa_{1})$, we have that
	
	\textbf{i)} D$_{m}(\alpha_{1} \circ h_{1},\cdots,\alpha_{n} \circ h_{n}) \leq$ D$_{m}(h_{1},\cdots,h_{n})$ provided that $\alpha_{i} \simeq_{\lambda_{1},\omega} \alpha_{i+1}$ for any $i \in \{1,\cdots,n-1\}$. 
	
	\textbf{ii)} D$_{m}(h_{1} \circ \beta_{1},\cdots,h_{n} \circ \beta_{n}) \leq$ D$_{m}(h_{1},\cdots,h_{n})$ provided that $\beta_{i} \simeq_{\omega,\kappa_{1}} \beta_{i+1}$ for any $i \in \{1,\cdots,n-1\}$.
\end{proposition}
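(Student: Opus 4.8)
The plan is to mimic the two-map case (Proposition~\ref{prop1}) with the obvious bookkeeping for a chain of $n$ maps. Start by setting $q = \text{D}_{m}(h_{1},\cdots,h_{n})$ and writing $A = X_{0} \cup \cdots \cup X_{q}$ so that, for every $m$-dimensional digital complex $(P,\delta)$ and every digitally $(\delta,\kappa_{1})$-continuous map $\phi : P \rightarrow X_{j}$, one has the chain
\[
h_{1} \circ \phi \simeq_{\delta,\lambda_{1}} h_{2} \circ \phi \simeq_{\delta,\lambda_{1}} \cdots \simeq_{\delta,\lambda_{1}} h_{n} \circ \phi.
\]
For part (i), postcompose with the $\alpha_{i}$'s: each link $h_{i} \circ \phi \simeq_{\delta,\lambda_{1}} h_{i+1} \circ \phi$ gives $\alpha_{i} \circ h_{i} \circ \phi \simeq_{\delta,\omega} \alpha_{i} \circ h_{i+1} \circ \phi$ by post-composition of a digital homotopy with a continuous map, and the hypothesis $\alpha_{i} \simeq_{\lambda_{1},\omega} \alpha_{i+1}$ yields $\alpha_{i} \circ h_{i+1} \circ \phi \simeq_{\delta,\omega} \alpha_{i+1} \circ h_{i+1} \circ \phi$. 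Splicing these two homotopies at the common map $\alpha_{i} \circ h_{i+1} \circ \phi$ gives $\alpha_{i} \circ h_{i} \circ \phi \simeq_{\delta,\omega} \alpha_{i+1} \circ h_{i+1} \circ \phi$, and concatenating over $i = 1,\cdots,n-1$ produces the full chain $\alpha_{1} \circ h_{1} \circ \phi \simeq_{\delta,\omega} \cdots \simeq_{\delta,\omega} \alpha_{n} \circ h_{n} \circ \phi$ on the same cover $X_{0},\cdots,X_{q}$. Hence $\text{D}_{m}(\alpha_{1} \circ h_{1},\cdots,\alpha_{n} \circ h_{n}) \leq q$.

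For part (ii), precompose instead: put $Y_{j} = \beta_{i}^{-1}(X_{j})$—but here one must be careful, since there are $n$ different maps $\beta_{i}$, so the single cover of $B$ must be chosen to work simultaneously for all of them. The fix is to refine: take $Y_{j} = \bigcap_{i=1}^{n} \beta_{i}^{-1}(X_{j})$ is \emph{not} right either, since the indices need not match up; the correct choice is to use the cover of $B$ indexed by $j$ via $Y_{j} = \beta_{1}^{-1}(X_{j})$ together with the hypothesis $\beta_{i} \simeq_{\omega,\kappa_{1}} \beta_{i+1}$, which forces all the $\beta_{i}$ to land, up to homotopy, in the same piece. Concretely, for a $(\delta,\omega)$-continuous $\phi' : P \rightarrow Y_{j}$, the composite $\beta_{1} \circ \phi'$ maps into $X_{j}$, so $h_{1} \circ \beta_{1} \circ \phi' \simeq_{\delta,\lambda_{1}} \cdots \simeq_{\delta,\lambda_{1}} h_{n} \circ \beta_{1} \circ \phi'$ by the definition of $q$; then for each $i$ the homotopy $\beta_{i} \simeq_{\omega,\kappa_{1}} \beta_{i+1}$ gives $h_{i+1} \circ \beta_{i} \circ \phi' \simeq_{\delta,\lambda_{1}} h_{i+1} \circ \beta_{i+1} \circ \phi'$, and splicing $h_{i} \circ \beta_{i} \circ \phi' \simeq h_{i} \circ \beta_{1} \circ \phi' \simeq h_{i+1} \circ \beta_{1} \circ \phi' \simeq h_{i+1} \circ \beta_{i+1} \circ \phi'$ assembles the desired chain $h_{1} \circ \beta_{1} \circ \phi' \simeq_{\delta,\lambda_{1}} \cdots \simeq_{\delta,\lambda_{1}} h_{n} \circ \beta_{n} \circ \phi'$. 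This shows $\text{D}_{m}(h_{1} \circ \beta_{1},\cdots,h_{n} \circ \beta_{n}) \leq q$.

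The routine facts I will invoke without comment are: a digital homotopy $F : C \times [0,z]_{\mathbb{Z}} \rightarrow D$ postcomposed with a digitally continuous $g : D \rightarrow E$ is a digital homotopy $g \circ F$ (same number of steps), and precomposed with a digitally continuous $f : C' \rightarrow C$ is a digital homotopy $F \circ (f \times 1_{[0,z]_{\mathbb{Z}}})$; and that $\simeq_{\delta,\lambda_{1}}$ is transitive on digitally continuous maps $P \rightarrow A'$ (concatenation of homotopies, possibly after reparametrizing to a common number of steps). The main obstacle is purely organizational rather than mathematical: in part (ii) one must resist the temptation to index the cover of $B$ by the preimages under all the $\beta_{i}$ separately, and instead observe that the homotopy hypotheses on the $\beta_{i}$ let a single preimage cover (say under $\beta_{1}$) do the job, with the mismatch between $\beta_{i} \circ \phi'$ and $\beta_{1} \circ \phi'$ absorbed by post-composing the given homotopies $\beta_{i} \simeq_{\omega,\kappa_{1}} \beta_{1}$ with the fixed maps $h_{i}$. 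Everything else is the same splice-and-concatenate argument already used in Proposition~\ref{prop1} and in the bulleted remark that $\text{D}_{m}$ is invariant under replacing each $h_{i}$ by a homotopic map.
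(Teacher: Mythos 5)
Your proposal is correct and follows essentially the same route as the paper: decompose $A$ according to $\mathrm{D}_{m}(h_{1},\cdots,h_{n})=q$, postcompose with the $\alpha_{i}$ for (i), and pull the cover back for (ii). The one place you genuinely diverge is in part (ii), and there your version is actually tighter than the paper's: the paper sets $Y_{j}=\beta_{i}^{-1}(X_{j})$ ``for each $i$'' and then treats $\phi=\beta_{i}\circ\phi^{'}$ as if a single cover worked for all $i$ simultaneously, which is ambiguous since the $\beta_{i}$ need not have matching preimages. Your choice of the single cover $Y_{j}=\beta_{1}^{-1}(X_{j})$, combined with the chain $\beta_{i}\simeq_{\omega,\kappa_{1}}\beta_{1}$ (obtained by transitivity from the hypotheses) postcomposed with the fixed $h_{i}$ to absorb the mismatch between $\beta_{i}\circ\phi^{'}$ and $\beta_{1}\circ\phi^{'}$, is exactly the repair that makes the argument airtight; it costs nothing beyond the splice-and-concatenate steps you already use in part (i).
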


\begin{proof}
	Assume that D$_{m}(h_{1},\cdots,h_{n}) = q$. Then $A = X_{0} \cup \cdots \cup X_{q}$, and for each $X_{j}$, $j = 0,\cdots,q$, any $(\delta,\kappa_{1})-$continuous map $\phi : P \rightarrow X_{j}$ admits the property that $$h_{1} \circ \phi \simeq_{\delta,\lambda_{1}} \cdots \simeq_{\delta,\lambda_{1}} h_{n} \circ \phi.$$
	
	\textbf{i)} We have that $$\alpha_{1} \circ (h_{1} \circ \phi) \simeq_{\delta,\omega} \cdots \simeq_{\delta,\omega} \alpha_{n} \circ (h_{n} \circ \phi) \ \ \Rightarrow \ \ (\alpha_{1} \circ h_{1}) \circ \phi \simeq_{\delta,\omega} \cdots \simeq_{\delta,\omega} (\alpha_{n} \circ h_{n}) \circ \phi,$$
	which means that D$_{m}(\alpha_{1} \circ h_{1},\cdots,\alpha_{n} \circ h_{n}) \leq q$.
	
	\textbf{ii)} If we define $Y_{j} = \beta_{i}^{-1}(X_{j})$ for each $i \in \{1,\cdots,n\}$, then $B = Y_{0} \cup \cdots \cup Y_{q}$. For each $Y_{j}$, $j = 0,\cdots,q$, any $(\delta,\omega)-$continuous map $\phi^{'} : P \rightarrow Y_{j}$ satisfies $\phi = \beta_{i} \circ \phi^{'}$. It follows that
	\begin{eqnarray*}
		h_{1} \circ (\beta_{1} \circ \phi^{'}) \simeq_{\delta,\lambda_{1}} h_{n} \circ (\beta_{n} \circ \phi^{'}) \ \ \Rightarrow \ \ (h_{1} \circ \beta_{1}) \circ \phi^{'} \simeq_{\delta,\lambda_{1}} (h_{n} \circ \beta_{n}) \circ \phi^{'}.
	\end{eqnarray*}
    Thus, we conclude that D$_{m}(h_{1} \circ \beta_{1},\cdots,h_{n} \circ \beta_{n}) \leq q$.
\end{proof}

\quad Theorem \ref{thm3} can be easily generalized as follows:

\begin{theorem}
	D$_{m}(h_{1},\cdots,h_{n}) \leq$ D$_{\kappa_{1},\lambda_{1}}(h_{1},\cdots,h_{n})$.
\end{theorem}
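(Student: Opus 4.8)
The plan is to mimic the proof of Theorem \ref{thm3} almost verbatim, replacing the single homotopy $h\circ\phi\simeq k\circ\phi$ by the chain of homotopies that appears in Definition \ref{def4}. First I would set $\text{D}_{\kappa_1,\lambda_1}(h_1,\cdots,h_n)=q$, so that $A=X_0\cup\cdots\cup X_q$ with $h_1|_{X_j}\simeq_{\kappa_1,\lambda_1}h_2|_{X_j}\simeq_{\kappa_1,\lambda_1}\cdots\simeq_{\kappa_1,\lambda_1}h_n|_{X_j}$ for each $j=0,\cdots,q$ (this is the defining property of the higher digital homotopic distance on the covering piece $X_j$). Fix an arbitrary $m$-dimensional digital complex $(P,\delta)$ and an arbitrary digitally $(\delta,\kappa_1)$-continuous map $\phi:P\rightarrow X_j$.

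Next I would precompose the whole chain with $\phi$. Since composing a digital homotopy $K:X_j\times[0,z]_{\mathbb Z}\rightarrow A'$ with $\phi$ (in the first variable) produces a digital homotopy $K\circ(\phi\times 1):P\times[0,z]_{\mathbb Z}\rightarrow A'$, each step $h_i|_{X_j}\simeq_{\kappa_1,\lambda_1}h_{i+1}|_{X_j}$ yields $h_i|_{X_j}\circ\phi\simeq_{\delta,\lambda_1}h_{i+1}|_{X_j}\circ\phi$. Concatenating these $n-1$ homotopies gives
\begin{eqnarray*}
	h_1|_{X_j}\circ\phi\simeq_{\delta,\lambda_1}h_2|_{X_j}\circ\phi\simeq_{\delta,\lambda_1}\cdots\simeq_{\delta,\lambda_1}h_n|_{X_j}\circ\phi.
\end{eqnarray*}
Then I would observe that $h_i|_{X_j}\circ\phi=h_i\circ\phi$ for every $i$, because $\phi$ has image contained in $X_j$, so the displayed chain becomes $h_1\circ\phi\simeq_{\delta,\lambda_1}\cdots\simeq_{\delta,\lambda_1}h_n\circ\phi$, which is precisely the condition required by Definition \ref{def4} on the piece $X_j$. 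Since $P$, $\phi$, and $j$ were arbitrary, the same cover $A=X_0\cup\cdots\cup X_q$ witnesses $\text{D}_m(h_1,\cdots,h_n)\leq q$, completing the argument.

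I do not expect a genuine obstacle here; the statement is a routine generalization of Theorem \ref{thm3}, and the only mild point to be careful about is that a \emph{chain} of digital homotopies must be transported through $\phi$ step by step and then reconcatenated — transitivity of digital homotopy (concatenating homotopies in $z_1$ and $z_2$ steps to get one in $z_1+z_2$ steps) is used implicitly, exactly as it already is in Definition \ref{def4}. No new idea beyond restriction-commutes-with-composition and stability of digital homotopy under precomposition is needed.
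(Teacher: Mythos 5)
Your proposal is correct and follows essentially the same route as the paper: assume $\text{D}_{\kappa_{1},\lambda_{1}}(h_{1},\cdots,h_{n})=q$, reuse the same cover $A=X_{0}\cup\cdots\cup X_{q}$, precompose each homotopy in the chain with $\phi$, and use $h_{l}|_{X_{j}}\circ\phi=h_{l}\circ\phi$ to conclude. The paper's own proof is just a terser version of exactly this generalization of Theorem \ref{thm3}.
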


\begin{proof}
	The proof follows the same way as the proof of Theorem \ref{thm3}. Indeed, for all $l \in \{1,\cdots,n\}$, 
	\begin{eqnarray*}
		h_{l}|_{X_{j}} \circ \phi = h_{l} \circ \phi \ \ \Rightarrow \ \ h_{1} \circ \phi \simeq_{\delta,\lambda_{1}} \cdots \simeq_{\delta,\lambda_{1}} h_{n} \circ \phi.
	\end{eqnarray*}
\end{proof}

\quad Let us examine the effect of changes in the adjacency relation on D$_{m}(,\cdots,)$.

\begin{proposition}
	Assume that $h_{i}, k_{i} : (A,\kappa) \rightarrow (A^{'},\lambda_{i})$ is digitally continuous maps for every $i \in \{1,\cdots,n\}$. Then 
	\begin{eqnarray*}
		\text{D}_{m}(h_{1},\cdots,h_{n};\kappa,\lambda_{i}) \leq \text{D}_{m}(k_{1},\cdots,k_{n};\kappa,\lambda_{n})
	\end{eqnarray*}
    provided that $\lambda_{n} \geq_{d} \lambda_{n-1} \geq_{d} \cdots \geq_{d} \lambda_{1}$ and $h_{i} \simeq_{\kappa,\lambda_{i}} k_{i}$ for each $i$.
\end{proposition}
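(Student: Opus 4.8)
The plan is to collapse the various target adjacencies onto the single coarsest one and then chain digital homotopies. Note first that for $h_{1},\dots,h_{n}$ to share a codomain the target appearing on the left must be $\lambda_{1}$, since $\lambda_{1}$ is the only adjacency in the chain that is dominated by every $\lambda_{i}$; so the assertion to prove is $\mathrm{D}_{m}(h_{1},\cdots,h_{n};\kappa,\lambda_{1})\leq\mathrm{D}_{m}(k_{1},\cdots,k_{n};\kappa,\lambda_{n})$. I would begin with the bookkeeping supplied by the chain $\lambda_{n}\geq_{d}\lambda_{n-1}\geq_{d}\cdots\geq_{d}\lambda_{1}$: by transitivity of $\geq_{d}$ we get $\lambda_{i}\geq_{d}\lambda_{1}$ for every $i$, and by the domination proposition recalled in Section \ref{sec:1} (a $(\kappa,\lambda)$-continuous map stays $(\kappa,\lambda')$-continuous once $\lambda\geq_{d}\lambda'$) every $h_{i}$ is $(\kappa,\lambda_{1})$-continuous, so the left-hand quantity is defined. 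Applying the same domination proposition to the map $K$ and to each of its slices $K_{t}$, $K_{a}$ shows that a digital $(\kappa,\lambda_{n})$-homotopy is in particular a digital $(\kappa,\lambda_{1})$-homotopy; in particular the hypothesis $h_{i}\simeq_{\kappa,\lambda_{i}}k_{i}$ upgrades to $h_{i}\simeq_{\kappa,\lambda_{1}}k_{i}$ for every $i$.

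Next I would put $q=\mathrm{D}_{m}(k_{1},\cdots,k_{n};\kappa,\lambda_{n})$ and take a witnessing decomposition $A=X_{0}\cup\cdots\cup X_{q}$, so that for every block $X_{j}$ and every $(\delta,\kappa)$-continuous map $\phi:P\to X_{j}$ the maps $k_{1}\circ\phi,\dots,k_{n}\circ\phi$ are pairwise digitally $(\delta,\lambda_{n})$-homotopic; by the previous paragraph they are then pairwise digitally $(\delta,\lambda_{1})$-homotopic. On the other hand, precomposing the homotopy $h_{i}\simeq_{\kappa,\lambda_{1}}k_{i}$ with $\phi$ yields $h_{i}\circ\phi\simeq_{\delta,\lambda_{1}}k_{i}\circ\phi$ for each $i$ (precomposition of a digital homotopy with a digitally continuous map is again a digital homotopy --- the elementary fact underlying Proposition \ref{prop1}). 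Since digital homotopy of maps $P\to A^{'}$ is an equivalence relation, for each $i$ we get
\[
h_{i}\circ\phi\ \simeq_{\delta,\lambda_{1}}\ k_{i}\circ\phi\ \simeq_{\delta,\lambda_{1}}\ k_{1}\circ\phi\ \simeq_{\delta,\lambda_{1}}\ h_{1}\circ\phi ,
\]
so $h_{1}\circ\phi,\dots,h_{n}\circ\phi$ all lie in one digital homotopy class, which is exactly the condition of Definition \ref{def4}. As $X_{j}$ and $\phi$ were arbitrary, the decomposition $X_{0},\dots,X_{q}$ witnesses $\mathrm{D}_{m}(h_{1},\cdots,h_{n};\kappa,\lambda_{1})\leq q$, which is the claim.

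I expect the one delicate point to be the passage of a digital homotopy to a coarser target adjacency. Concretely, one must check that relaxing the codomain adjacency from $\lambda_{n}$ to $\lambda_{1}$ preserves \emph{all} the clauses in the definition of a digital homotopy --- that $K$, every slice $K_{t}$, and every slice $K_{a}$ stay digitally continuous. This is the domination proposition of Section \ref{sec:1} used three times, together with the remark that $a\leftrightarrow_{\lambda_{n}}a^{'}\Rightarrow a\leftrightarrow_{\lambda_{1}}a^{'}$ forces $a\leftrightarroweq_{\lambda_{n}}a^{'}\Rightarrow a\leftrightarroweq_{\lambda_{1}}a^{'}$, so that the defining implication of continuity survives; the remaining steps are routine bookkeeping. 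The only other thing worth stating explicitly is the interpretation of the left-hand side noted at the outset: $\lambda_{1}$, the bottom of the domination chain, is the target for which all of $h_{1},\dots,h_{n}$ are simultaneously defined, and the hypothesis $\lambda_{n}\geq_{d}\cdots\geq_{d}\lambda_{1}$ is precisely what makes the reductions go through uniformly in $i$.
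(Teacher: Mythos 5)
Your argument is correct and is essentially the paper's own proof: take a witnessing decomposition for $\mathrm{D}_{m}(k_{1},\cdots,k_{n};\kappa,\lambda_{n})$, use the domination chain to pass the homotopies $k_{l}\circ\phi\simeq k_{l+1}\circ\phi$ to a coarser target adjacency, splice in $h_{i}\circ\phi\simeq k_{i}\circ\phi$ obtained by precomposing the hypothesis $h_{i}\simeq_{\kappa,\lambda_{i}}k_{i}$ with $\phi$, and concatenate. The one point where you go beyond the paper is in pinning the left-hand target adjacency down to $\lambda_{1}$, the bottom of the chain and the only adjacency for which all the $h_{i}$ are simultaneously continuous and all the links can be concatenated; the paper's statement and its final chain $h_{l}\circ\phi\simeq_{\delta,\lambda_{i}}k_{l}\circ\phi\simeq_{\delta,\lambda_{i}}k_{l+1}\circ\phi\simeq_{\delta,\lambda_{i}}h_{l+1}\circ\phi$ leave the index $i$ floating, and your reading is the correct resolution.
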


\begin{proof}
	Let D$_{m}(k_{1},\cdots,k_{n};\kappa,\lambda_{n}) = q$. Then $A = X_{0} \cup \cdots \cup X_{q}$ and, for each $X_{j}$, $j = 0,\cdots,q$, any $(\delta,\kappa)-$continuous map $\phi : P \rightarrow X_{j}$ admits the property that $k_{l} \circ \phi \simeq_{\delta,\lambda_{n}} k_{l+1} \circ \phi$ for every $l \in \{1,\cdots,n-1\}$. Since $\lambda_{n} \geq_{d} \lambda_{i}$, we obtain that $k_{l} \circ \phi \simeq_{\delta,\lambda_{i}} k_{l+1} \circ \phi$ for every $l$. Also, $h_{i} \simeq_{\kappa,\lambda_{i}} k_{i}$ implies that $h_{i} \circ \phi \simeq_{\delta,\lambda_{i}} k_{i} \circ \phi$ for all $i$. By using these facts, we get
	\begin{eqnarray*}
		h_{l} \circ \phi \simeq_{\delta,\lambda_{i}} k_{l} \circ \phi \simeq_{\delta,\lambda_{i}} k_{l+1} \circ \phi \simeq_{\delta,\lambda_{i}} h_{l+1} \circ \phi. 
	\end{eqnarray*}
    Finally, D$_{m}(h_{1},\cdots,h_{n};\kappa,\lambda_{i}) \leq q$.
\end{proof}

\begin{proposition}
	Assume that $h_{i}, k_{i} : (A,\kappa_{i}) \rightarrow (A^{'},\lambda)$ is digitally continuous maps for every $i \in \{1,\cdots,n\}$. Then 
	\begin{eqnarray*}
		\text{D}_{m}(h_{1},\cdots,h_{n};\kappa_{i},\lambda) \geq \text{D}_{m}(k_{1},\cdots,k_{n};\kappa_{n},\lambda)
	\end{eqnarray*}
	provided that $\kappa_{n} \geq_{d} \kappa_{n-1} \geq_{d} \cdots \geq_{d} \kappa_{1}$ and $h_{i} \simeq_{\kappa_{i},\lambda} k_{i}$ for each $i$.
\end{proposition}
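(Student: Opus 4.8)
The plan is to imitate the proof of the preceding proposition, but with the domination hypothesis used on the source adjacencies and with the two families of maps playing swapped roles. First I would record that, since $\kappa_{n} \geq_{d} \kappa_{i}$ for every $i$, part \textbf{ii)} of the domination Proposition of \cite{Boxer:2018} turns every digitally $(\kappa_{i},\lambda)$-continuous map into a digitally $(\kappa_{n},\lambda)$-continuous one; hence all of $h_{1},\dots,h_{n},k_{1},\dots,k_{n}$ are $(\kappa_{n},\lambda)$-continuous and both sides of the claimed inequality are genuinely computed with the common source adjacency $\kappa_{n}$ (the choice of $\kappa_{n}$ for the left-hand side is exactly what renders that expression well defined). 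So the content to be proved is $\text{D}_{m}(h_{1},\dots,h_{n};\kappa_{n},\lambda) \geq \text{D}_{m}(k_{1},\dots,k_{n};\kappa_{n},\lambda)$.

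The one step that is not pure bookkeeping is upgrading the hypothesis $h_{i} \simeq_{\kappa_{i},\lambda} k_{i}$ to $h_{i} \simeq_{\kappa_{n},\lambda} k_{i}$. Given a digital homotopy $K : A \times [0,z]_{\mathbb{Z}} \to A^{'}$ realizing $h_{i} \simeq_{\kappa_{i},\lambda} k_{i}$, each fixed-level map $K_{t} : (A,\kappa_{i}) \to (A^{'},\lambda)$ is digitally continuous, hence, again by $\kappa_{n} \geq_{d} \kappa_{i}$ and part \textbf{ii)} of \cite{Boxer:2018}, digitally $(\kappa_{n},\lambda)$-continuous; the fixed-point maps $K_{a} : [0,z]_{\mathbb{Z}} \to A^{'}$ do not involve the source adjacency of $A$ at all. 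Thus the same $K$ is a digital $(\kappa_{n},\lambda)$-homotopy, so $h_{i} \simeq_{\kappa_{n},\lambda} k_{i}$ for every $i$.

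With all maps $(\kappa_{n},\lambda)$-continuous and $h_{i} \simeq_{\kappa_{n},\lambda} k_{i}$ for every $i$, the homotopy-invariance fact listed after Definition \ref{def4} immediately gives $\text{D}_{m}(h_{1},\dots,h_{n};\kappa_{n},\lambda) = \text{D}_{m}(k_{1},\dots,k_{n};\kappa_{n},\lambda)$, which certainly yields the desired $\geq$. If a self-contained argument in the style of Theorem \ref{thm3} is preferred: assuming $\text{D}_{m}(h_{1},\dots,h_{n};\kappa_{n},\lambda) = q$, write $A = X_{0} \cup \cdots \cup X_{q}$ with $h_{1}\circ\phi \simeq_{\delta,\lambda} \cdots \simeq_{\delta,\lambda} h_{n}\circ\phi$ for every $(\delta,\kappa_{n})$-continuous $\phi : P \to X_{j}$; composing the homotopies $h_{i}\simeq_{\kappa_{n},\lambda}k_{i}$ with $\phi$ gives $h_{i}\circ\phi \simeq_{\delta,\lambda} k_{i}\circ\phi$, and then for each $l$ the chain $k_{l}\circ\phi \simeq_{\delta,\lambda} h_{l}\circ\phi \simeq_{\delta,\lambda} h_{l+1}\circ\phi \simeq_{\delta,\lambda} k_{l+1}\circ\phi$ shows that the same cover witnesses $\text{D}_{m}(k_{1},\dots,k_{n};\kappa_{n},\lambda) \leq q$.

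I do not expect a genuine obstacle: the proof is a short variation on the preceding proposition, and the only place needing real (though routine) care is the middle step, namely verifying that a digital homotopy is unaffected by replacing the source adjacency $\kappa_{i}$ by the dominating $\kappa_{n}$. Everything else is the adjacency bookkeeping and transitivity-of-homotopy chaining already used repeatedly in Section \ref{sec:3}.
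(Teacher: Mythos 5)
Your proof is correct and follows essentially the same route as the paper's: both arguments hinge on using $\kappa_{n} \geq_{d} \kappa_{i}$ to upgrade the homotopies $h_{i} \simeq_{\kappa_{i},\lambda} k_{i}$ to the common source adjacency $\kappa_{n}$ and then chaining $k_{l}\circ\phi \simeq h_{l}\circ\phi \simeq h_{l+1}\circ\phi \simeq k_{l+1}\circ\phi$ over the cover witnessing the left-hand side. Your explicit resolution of the ambiguous $\kappa_{i}$ in the statement (reading the left-hand side with source adjacency $\kappa_{n}$, the only one under which all the $h_{l}$ are simultaneously continuous) is a welcome clarification that the paper leaves implicit.
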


\begin{proof}
	Let D$_{m}(h_{1},\cdots,h_{n};\kappa_{i},\lambda) = q$. Then $A = X_{0} \cup \cdots \cup X_{q}$ and, for each $X_{j}$, $j = 0,\cdots,q$, any $(\delta,\kappa)-$continuous map $\phi : P \rightarrow X_{j}$ admits the property that 
	\begin{eqnarray*}
		(P,\delta) \stackrel{\phi}{\rightarrow} (X_{j},\kappa_{i}) \stackrel{h_{l}}{\rightarrow} (A^{'},\lambda) \ \ \text{is digitally homotopic to} \ \ (P,\delta) \stackrel{\phi}{\rightarrow} (X_{j},\kappa_{i}) \stackrel{h_{l+1}}{\rightarrow} (A^{'},\lambda)
	\end{eqnarray*}
    for every $l \in \{1,\cdots,n-1\}$. Since $\kappa_{n} \geq_{d} \kappa_{i}$, we obtain that
    \begin{eqnarray*}
    	(P,\delta) \stackrel{\phi}{\rightarrow} (X_{j},\kappa_{n}) \stackrel{h_{l}}{\rightarrow} (A^{'},\lambda) \ \ \text{is digitally homotopic to} \ \ (P,\delta) \stackrel{\phi}{\rightarrow} (X_{j},\kappa_{n}) \stackrel{h_{l+1}}{\rightarrow} (A^{'},\lambda).
    \end{eqnarray*}
    Also, $h_{i} \simeq_{\kappa_{i},\lambda} k_{i}$ implies that 
    \begin{eqnarray*}
    	(P,\delta) \stackrel{\phi}{\rightarrow} (X_{j},\kappa_{n}) \stackrel{h_{l}}{\rightarrow} (A^{'},\lambda) \ \ \text{is digitally homotopic to} \ \ (P,\delta) \stackrel{\phi}{\rightarrow} (X_{j},\kappa_{n}) \stackrel{k_{l}}{\rightarrow} (A^{'},\lambda)
    \end{eqnarray*}
    by $\kappa_{n} \geq_{d} \kappa_{i}$ for all $i$.
    The composition of these facts gives us
    \begin{eqnarray*}
    	(P,\delta) \stackrel{\phi}{\rightarrow} (X_{j},\kappa_{n}) \stackrel{k_{l}}{\rightarrow} (A^{'},\lambda) \ \ \text{is digitally homotopic to} \ \ (P,\delta) \stackrel{\phi}{\rightarrow} (X_{j},\kappa_{n}) \stackrel{k_{l+1}}{\rightarrow} (A^{'},\lambda).
    \end{eqnarray*}
    Consequently, \text{D}$_{m}(k_{1},\cdots,k_{n};\kappa_{n},\lambda) \leq q$.
\end{proof}

\subsection{Related Notions To The Higher Version}
\label{subsec:4}

\begin{definition}
	Let $n>1$. For any digitally $\kappa_{1}-$connected digital image $A$,
	\begin{eqnarray*}
		\text{n-TC}^{m}(A,\kappa_{1}) = \text{D}_{m}(pr_{1},\cdots,pr_{n}),
	\end{eqnarray*}
	where $pr_{1},\cdots,pr_{n} : A^{n} \rightarrow A$ are the projections.
\end{definition}

\quad If $n=1$, then it is always assumed that $1-$TC$^{m}(A,\kappa_{1}) = 1$. It is easy to say that $2-$TC$^{m}(A,\kappa_{1}) =$ TC$^{m}(A,\kappa_{1})$. Moreover, $(n+1)-$TC$^{m}(A,\kappa_{1}) \geq n-$TC$^{m}(A,\kappa_{1})$.

\begin{proposition}
	For any digitally $(\kappa_{1},\lambda_{1})-$continuous maps $h_{1},\cdots,h_{n} : A \rightarrow B$ with the digitally connected domain and range,
	\begin{eqnarray*}
		\text{D}_{m}(h_{1},\cdots,h_{n}) \leq \text{n-TC}^{m}(B,\lambda_{1}).
	\end{eqnarray*}
\end{proposition}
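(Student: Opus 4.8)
The plan is to factor the tuple $(h_1,\dots,h_n)$ through the $n$-fold product $B^n$ and then to invoke the right-composition monotonicity already established for the higher $m$-homotopic distance. First I would introduce the map
$$H : (A,\kappa_1) \longrightarrow (B^n,\text{NP}_n(\lambda_1,\dots,\lambda_1)), \qquad H(a)=(h_1(a),\dots,h_n(a)),$$
and verify that it is digitally continuous: if $a \leftrightarrow_{\kappa_1} a'$ in $A$, then continuity of each $h_i$ gives $h_i(a) \leftrightarroweq_{\lambda_1} h_i(a')$, and by the definition of the normal product adjacency this is exactly what is required for $H(a) \leftrightarroweq_{\text{NP}_n(\lambda_1,\dots,\lambda_1)} H(a')$ (coordinates in which equality holds contribute nothing, and at most $n$ coordinates carry a strict $\lambda_1$-adjacency). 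Since $B$ is digitally connected, $\text{n-TC}^m(B,\lambda_1)=\text{D}_m(pr_1,\dots,pr_n)$ is defined, where $pr_i:(B^n,\text{NP}_n(\lambda_1,\dots,\lambda_1))\to(B,\lambda_1)$ are the projections.

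Next I would record the tautology $pr_i \circ H = h_i$ for each $i \in \{1,\dots,n\}$, whence
$$\text{D}_m(h_1,\dots,h_n)=\text{D}_m(pr_1\circ H,\dots,pr_n\circ H).$$
Now apply part (ii) of the composition proposition for the digital higher $m$-homotopic distance, with the maps taken to be $pr_1,\dots,pr_n$ and the right factors $\beta_1=\dots=\beta_n=H$; its hypothesis $\beta_i \simeq \beta_{i+1}$ holds vacuously because all the $\beta_i$ coincide with $H$. This yields $\text{D}_m(pr_1\circ H,\dots,pr_n\circ H)\le \text{D}_m(pr_1,\dots,pr_n)$, and combining with the displayed identity and the definition of $\text{n-TC}^m$ gives $\text{D}_m(h_1,\dots,h_n)\le \text{n-TC}^m(B,\lambda_1)$, as desired.

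The main --- and essentially only --- obstacle is the continuity check for $H$: one must make sure that the adjacency on $B^n$ occurring in $\text{n-TC}^m(B,\lambda_1)$ is precisely $\text{NP}_n(\lambda_1,\dots,\lambda_1)$ and that allowing $\leftrightarroweq_{\lambda_1}$ (with equality permitted) in each slot is exactly what this adjacency records; once this is settled, the rest is a purely formal application of an already-proved monotonicity property, with no new homotopy-theoretic content. As a consistency check, for $n=2$ the statement specializes to $\text{D}_m(h_1,h_2)\le 2\text{-TC}^m(B,\lambda_1)=\text{TC}^m(B,\lambda_1)$, the several-map analogue of Corollary \ref{cor3}.
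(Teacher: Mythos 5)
Your proposal is correct and follows essentially the same route as the paper: both factor $h_i = pr_i\circ H$ through $B^n$ (the paper writes $H$ as $\varphi\circ\Delta$) and then apply the right-composition monotonicity of $\mathrm{D}_m$ with all right factors equal. Your explicit continuity check for $H$ and the identification of the relevant adjacency $\mathrm{NP}_n(\lambda_1,\dots,\lambda_1)$ are details the paper leaves implicit, but the argument is the same.
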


\begin{proof}
	Consider a diagonal map $\Delta : (A,\kappa_{1}) \rightarrow (A^{n},\text{NP}(\kappa_{1},\cdots,\kappa_{1}))$ defined as $\Delta(a) = (a,\cdots,a)$, a map $\varphi := (h_{1},\cdots,h_{n}) : (A,\kappa_{1}) \rightarrow (B^{n},\text{NP}(\lambda_{1},\cdots,\lambda_{1}))$ with $\varphi(a) = (h_{1}(a),\cdots,h_{n}(a))$, and a projection $pr^{'}_{i} : (B^{n},\text{NP}(\lambda_{1},\cdots,\lambda_{1})) \rightarrow (B,\lambda_{1})$ for each $i \in \{1,\cdots,n\}$. Note that $pr^{'}_{i} \circ \varphi \circ \Delta = h_{i}$. Therefore, we get
	\begin{eqnarray*}
		\text{D}_{m}(pr^{'}_{1},\cdots,pr^{'}_{n}) \geq \text{D}_{m}(pr^{'}_{1} \circ \varphi \circ \Delta,\cdots,pr^{'}_{n} \circ \varphi \circ \Delta) = \text{D}_{m}(h_{1},\cdots,h_{n}).
	\end{eqnarray*}
\end{proof}

\begin{proposition}
	For any digitally $(\kappa_{1},\lambda_{1})-$continuous maps $h_{1},\cdots,h_{n} : A \rightarrow B$ with the digitally connected domain and range,
	\begin{eqnarray*}
		\text{D}_{m}(h_{1},\cdots,h_{n}) \leq \text{cat}_{m}(A,\kappa_{1}).
	\end{eqnarray*}
\end{proposition}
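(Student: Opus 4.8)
The plan is to reduce this statement to the previously established results about categorical weight, namely Corollary \ref{cor2} which gives $\text{D}_{m}(h,k) \leq \text{cat}_{m}(A,\kappa_{1})$, together with the triangle-type behaviour of $\text{D}_{m}$ under composition (Proposition \ref{prop1}) and the characterisation $\text{cat}_{m}(A,\kappa_{1}) = \text{D}_{m}(1_{A},c)$ from Theorem \ref{thm1}. The key observation is that if $A$ admits a categorical cover $A = X_{0} \cup \cdots \cup X_{q}$ with $q = \text{cat}_{m}(A,\kappa_{1})$, so that for each $j$ and each $(\delta,\kappa_{1})$-continuous $\phi : P \to X_{j}$ the composite $i_{j} \circ \phi$ is digitally nullhomotopic, then on each such $X_{j}$ every $h_{l} \circ \phi = h_{l} \circ i_{j} \circ \phi$ becomes digitally homotopic to the constant composite $h_{l} \circ c$. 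Since $h_{l} \circ c$ is the constant map at $h_{l}(\ast)$ — but all of these constants land in the $\lambda_{1}$-connected image, hence are mutually digitally homotopic — we get $h_{1} \circ \phi \simeq_{\delta,\lambda_{1}} \cdots \simeq_{\delta,\lambda_{1}} h_{n} \circ \phi$ on $X_{j}$.

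Concretely I would argue as follows. First set $q = \text{cat}_{m}(A,\kappa_{1})$ and take the cover $A = X_{0} \cup \cdots \cup X_{q}$ from Definition \ref{def2}. Fix $j$ and an arbitrary $(\delta,\kappa_{1})$-continuous map $\phi : P \to X_{j}$; write $i_{j} : X_{j} \hookrightarrow A$ for the inclusion. By the categorical property, $i_{j} \circ \phi \simeq_{\delta,\kappa_{1}} c'$ where $c' : P \to A$ is constant, say with value $a_{0} \in A$. Composing with $h_{l}$ on the left (digital homotopies are preserved under post-composition by digitally continuous maps, as used throughout Section \ref{sec:2}) gives $h_{l} \circ \phi = h_{l} \circ i_{j} \circ \phi \simeq_{\delta,\lambda_{1}} h_{l} \circ c'$, and $h_{l} \circ c'$ is the constant map $P \to B$ with value $h_{l}(a_{0})$. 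Next, because $B$ is digitally $\lambda_{1}$-connected, any two of the points $h_{1}(a_{0}), \dots, h_{n}(a_{0})$ are joined by a digital $\lambda_{1}$-path, and a digital path in $B$ yields a digital homotopy between the corresponding constant maps $P \to B$; chaining these we get $h_{1} \circ c' \simeq_{\delta,\lambda_{1}} \cdots \simeq_{\delta,\lambda_{1}} h_{n} \circ c'$. Combining, $h_{1} \circ \phi \simeq_{\delta,\lambda_{1}} \cdots \simeq_{\delta,\lambda_{1}} h_{n} \circ \phi$ holds on every $X_{j}$, so by Definition \ref{def4} we conclude $\text{D}_{m}(h_{1},\cdots,h_{n}) \leq q = \text{cat}_{m}(A,\kappa_{1})$.

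Alternatively — and perhaps cleaner for the paper's style — one can bypass the connectedness argument by invoking the monotonicity fact listed after Definition \ref{def4}, that inserting a map only increases $\text{D}_{m}$, together with Corollary \ref{cor2}: one has $\text{D}_{m}(h_{1},\cdots,h_{n}) \leq \text{D}_{m}(h_{1},\cdots,h_{n},c)$ for a constant $c$, but this still needs the $h_{i}$ to be pairwise comparable through $c$, so the $\lambda_{1}$-connectedness of $B$ is genuinely being used. I would therefore keep the direct argument of the previous paragraph.

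The main obstacle I anticipate is purely bookkeeping rather than conceptual: one must be careful that the digital homotopy $h_{l} \circ \phi \simeq_{\delta,\lambda_{1}} h_{l} \circ c'$ is genuinely a homotopy of maps $P \to B$ (not $P \to A$), which is fine since $h_{l}$ is $(\kappa_{1},\lambda_{1})$-continuous and digital homotopies compose with digitally continuous maps; and that the "constant maps joined by a path are homotopic" step is legitimate in the digital setting — this is the standard fact that a digital $\lambda_{1}$-path $[0,z]_{\mathbb{Z}} \to B$ from $h_{i}(a_{0})$ to $h_{i+1}(a_{0})$ induces, via $(p,t) \mapsto (\text{path})(t)$, a digital homotopy of the constant maps, using that the projection $P \times [0,z]_{\mathbb{Z}} \to [0,z]_{\mathbb{Z}}$ is digitally continuous. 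Both points are routine given the machinery already recalled in Section \ref{sec:1}, so the proof should be short.
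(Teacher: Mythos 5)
Your argument is essentially the paper's own proof: both start from the categorical cover $A = X_{0}\cup\cdots\cup X_{q}$ furnished by $\mathrm{cat}_{m}(A,\kappa_{1}) = \mathrm{D}_{m}(1_{A},c)$, observe that each admissible $\phi : P \to X_{j}$ is digitally nullhomotopic, and conclude that every $h_{l}\circ\phi$ is nullhomotopic and hence that the $h_{l}\circ\phi$ are mutually digitally homotopic. The only difference is that you spell out the step the paper leaves implicit --- that the distinct constant maps at $h_{1}(a_{0}),\dots,h_{n}(a_{0})$ are mutually homotopic because the range $B$ is $\lambda_{1}$-connected, via digital paths inducing homotopies of constant maps --- which is a welcome clarification rather than a departure.
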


\begin{proof}
	We shall show that D$_{m}(h_{1},\cdots,h_{n}) \leq q$ when D$_{m}(1_{A},c) = q$ by Theorem \ref{thm1}. Then $A = X_{0} \cup \cdots \cup X_{q}$ and, for each $X_{j}$, $j = 0,\cdots,q$, the property $$1_{A} \circ \phi \simeq_{\delta,\kappa_{1}} c \circ \phi$$ holds for any $(\delta,\kappa_{1})-$continuous map $\phi : P \rightarrow X_{j}$. This means that $\phi$ is digitally nullhomotopic. Thus, for every $i \in \{1,\cdots,n\}$, $h_{i} \circ \phi$ is digitally nullhomotopic, i.e.,
	\begin{eqnarray*}
		h_{1} \circ \phi \simeq_{\delta,\lambda_{1}} \cdots \simeq_{\delta,\lambda_{1}} h_{n} \circ \phi \ : \ \text{constant}
	\end{eqnarray*} 
    Consequently, D$_{m}(h_{1},\cdots,h_{n}) \leq q$. 
\end{proof}

\quad Theorem \ref{thm4} is improved as follows:

\begin{corollary}
	For any digital fibration $h : (A,\kappa_{1}) \rightarrow (A^{'},\lambda_{1})$,
	\begin{eqnarray*}
		\text{n-TC}^{m}(h;\kappa_{1},\lambda_{1}) = \text{D}_{m}(h \circ pr_{1},h \circ pr_{2},\cdots h \circ pr_{n}),
	\end{eqnarray*}
	where $pr_{1},pr_{2},\cdots,pr_{n} : A \times A \rightarrow A$ are the projections.
\end{corollary}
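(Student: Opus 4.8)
The plan is to run the proof of Theorem \ref{thm4} with $n$ maps in place of two. Here $\text{n-TC}^{m}(h;\kappa_{1},\lambda_{1})$ is read as the $m$-version of the sectional description of the higher topological complexity of a fibration: the least $q\ge 0$ for which $A^{n}$ (equivalently the twisted product $A\times(A^{'})^{n-1}$, via $1_{A}\times h\times\cdots\times h$) is covered by $X_{0}\cup\cdots\cup X_{q}$ so that over each $X_{j}$ there is a digitally continuous partial section $s_{j}$ of the twisted $n$-fold evaluation fibration $\pi_{h}$ with $\pi_{h}\circ s_{j}\circ\phi$ digitally homotopic to $\phi$ for every $(\delta,\text{NP}_{n}(\kappa_{1},\dots,\kappa_{1}))$-continuous $\phi:P\to X_{j}$ from an $m$-dimensional digital complex $P$; here $\pi_{h}$ sends a digital $n$-pod path in $A$ to its base point together with the $h$-images of its $n-1$ tips, and factors as $\pi_{h}=(1_{A}\times h\times\cdots\times h)\circ\pi$ over the untwisted evaluation $\pi$ on $A^{n}$, so that each $h\circ pr_{i}$ is recovered from the projections of $A\times(A^{'})^{n-1}$ precisely as $h\circ pr_{1}$ and $h\circ pr_{2}$ are in Theorem \ref{thm4}.

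First I would prove $\text{D}_{m}(h\circ pr_{1},\dots,h\circ pr_{n})\le q$ when $\text{n-TC}^{m}(h;\kappa_{1},\lambda_{1})=q$. Given a test map $\phi:P\to X_{j}$, rewrite the partial section $s_{j}$ as a digital homotopy $X_{j}\times[0,z]_{\mathbb{Z}}\to A$ realizing an $n$-pod of digital paths, post-compose with $h$, and read off along each of the $n-1$ legs a digital $(\delta,\lambda_{1})$-homotopy between the two evaluations at its ends; chaining these with the homotopies supplied by the section property yields $h\circ pr_{1}\circ\phi\simeq_{\delta,\lambda_{1}}\cdots\simeq_{\delta,\lambda_{1}}h\circ pr_{n}\circ\phi$, hence $\text{D}_{m}(h\circ pr_{1},\dots,h\circ pr_{n})\le q$. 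This half is just the diagram chase of Theorem \ref{thm4} iterated over the $n$ coordinate projections of $A\times(A^{'})^{n-1}$ together with the factorization of $\pi_{h}$.

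For the converse, assume $\text{D}_{m}(h\circ pr_{1},\dots,h\circ pr_{n})=q$, so that $A^{n}=X_{0}\cup\cdots\cup X_{q}$ with $h\circ pr_{1}\circ\phi\simeq_{\delta,\lambda_{1}}\cdots\simeq_{\delta,\lambda_{1}}h\circ pr_{n}\circ\phi$ on each $X_{j}$. Concatenate this chain into a single digital homotopy valued in $A^{'}$ that equals $h\circ pr_{i}$ at the $i$-th sample point, apply the digital homotopy lifting property of the fibration $h$ leg by leg to lift it to a digital $n$-pod of paths in $A$ issuing from $pr_{1}$, and reassemble these into a partial section $s_{j}$ of $\pi_{h}$ over $X_{j}$. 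Because $\pi_{h}$ applies $h$ to each non-base sample coordinate, $\pi_{h}\circ s_{j}$ then agrees, up to digital homotopy, with the restriction to $X_{j}$ of $1_{A}\times h\times\cdots\times h:A^{n}\to A\times(A^{'})^{n-1}$, so $\pi_{h}\circ s_{j}\circ\phi\simeq\phi$ and $\text{n-TC}^{m}(h;\kappa_{1},\lambda_{1})\le q$; combining the two inequalities gives the asserted equality.

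The crux is this last lifting step: one must invoke the digital homotopy lifting property $n-1$ times, once per leg of the path object, and glue the resulting partial lifts, which forces the homotopies to be rescaled to a common number of steps and their lifts to be matched at the shared base point --- exactly the place where the hypothesis that $h$ is a digital fibration (so that the lifting property holds for each digital image $X_{j}$) is used. Everything else is the bookkeeping of Theorem \ref{thm4} carried along $n$ projections rather than two.
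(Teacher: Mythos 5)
Your proposal is correct in spirit and follows essentially the same route as the paper, which states this corollary \emph{without proof} as the direct $n$-fold generalization of Theorem \ref{thm4}; your iteration of that theorem's argument over the $n$ projections, invoking the digital homotopy lifting property once per leg of the path object, is exactly the intended argument. Note that the paper never formally defines $\text{n-TC}^{m}(h;\kappa_{1},\lambda_{1})$ (and misprints the domain of the $pr_{i}$ as $A\times A$ rather than $A^{n}$), so your explicit sectional description via the twisted evaluation map is, if anything, more careful than the source.
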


\begin{corollary}
	$n-$TC$^{m}(h;\kappa_{1},\lambda_{1}) \leq$ $n-$TC$(h,\kappa_{1},\lambda_{1})$.
\end{corollary}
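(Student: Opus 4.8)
The plan is to reduce the inequality to identifications that are already on record, exactly as the bound TC$^{m} \leq$ TC was read off immediately after Definition \ref{def6}. First I would invoke the corollary improving Theorem \ref{thm4}, which identifies the digital higher $m$-topological complexity of the fibration with a digital higher $m$-homotopic distance, namely $n$-TC$^{m}(h;\kappa_{1},\lambda_{1}) = $ D$_{m}(h \circ pr_{1},\cdots,h \circ pr_{n})$, where $pr_{\iota} : (A^{n},\text{NP}_{n}(\kappa_{1},\cdots,\kappa_{1})) \to (A,\kappa_{1})$ are the projections. Dually, by the very definition of the digital higher TC of a digital fibration, $n$-TC$(h;\kappa_{1},\lambda_{1}) = $ D$(h \circ pr_{1},\cdots,h \circ pr_{n};\text{NP}_{n}(\kappa_{1},\cdots,\kappa_{1}),\lambda_{1})$, i.e. it is the ordinary digital higher homotopic distance of exactly the same tuple of maps.

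With these two identifications in hand the claim becomes the assertion that, for the tuple $(h \circ pr_{1},\cdots,h \circ pr_{n})$, its D$_{m}$ is bounded above by its D. This is precisely the higher analogue of Theorem \ref{thm3} (the theorem stating D$_{m}(h_{1},\cdots,h_{n}) \leq$ D$_{\kappa_{1},\lambda_{1}}(h_{1},\cdots,h_{n})$), applied with domain adjacency $\text{NP}_{n}(\kappa_{1},\cdots,\kappa_{1})$, range adjacency $\lambda_{1}$, and $h_{\iota} := h \circ pr_{\iota}$. Chaining the three steps yields
\begin{eqnarray*}
	n\text{-TC}^{m}(h;\kappa_{1},\lambda_{1}) &=& \text{D}_{m}(h \circ pr_{1},\cdots,h \circ pr_{n}) \\
	&\leq& \text{D}(h \circ pr_{1},\cdots,h \circ pr_{n};\text{NP}_{n}(\kappa_{1},\cdots,\kappa_{1}),\lambda_{1}) \\
	&=& n\text{-TC}(h;\kappa_{1},\lambda_{1}),
\end{eqnarray*}
which is the statement.

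If a self-contained version is preferred, the same conclusion comes directly from the definitions: pick a covering $A^{n} = X_{0} \cup \cdots \cup X_{q}$ realizing $n$-TC$(h;\kappa_{1},\lambda_{1}) = q$, so that on each $X_{j}$ the restrictions $(h \circ pr_{1})|_{X_{j}},\cdots,(h \circ pr_{n})|_{X_{j}}$ are pairwise digitally homotopic; then for any $m$-dimensional digital complex $(P,\delta)$ and any $(\delta,\text{NP}_{n}(\kappa_{1},\cdots,\kappa_{1}))$-continuous map $\phi : P \to X_{j}$, post-composing each of these homotopies with $\phi$ and using the identity $(h \circ pr_{\iota})|_{X_{j}} \circ \phi = h \circ pr_{\iota} \circ \phi$ gives $h \circ pr_{1} \circ \phi \simeq_{\delta,\lambda_{1}} \cdots \simeq_{\delta,\lambda_{1}} h \circ pr_{n} \circ \phi$, so the same covering witnesses $n$-TC$^{m}(h;\kappa_{1},\lambda_{1}) \leq q$.

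There is no real obstacle here; the only things needing a little care are the bookkeeping of the adjacency relations (domain $\text{NP}_{n}(\kappa_{1},\cdots,\kappa_{1})$, range $\lambda_{1}$) when quoting the higher analogue of Theorem \ref{thm3}, which is phrased for a generic pair of adjacencies, and — in the direct argument — noting that the restriction-then-compose identity $(h \circ pr_{\iota})|_{X_{j}} \circ \phi = h \circ pr_{\iota} \circ \phi$ is exactly what allows the covering to be reused verbatim.
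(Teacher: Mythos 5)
Your argument is correct and is essentially the route the paper takes: its one-line proof (``the result comes from Theorem \ref{thm3}'') is exactly your chain of identifying $n$-TC$^{m}(h;\kappa_{1},\lambda_{1})$ and $n$-TC$(h;\kappa_{1},\lambda_{1})$ with the $m$-homotopic and homotopic distances of the tuple $(h \circ pr_{1},\cdots,h \circ pr_{n})$ and then invoking the (higher) inequality D$_{m} \leq$ D. Your write-up simply makes the implicit steps, and the adjacency bookkeeping, explicit.
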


\begin{proof}
	The result comes from Theorem \ref{thm3}.
\end{proof}

\begin{theorem}
	Assume that the following diagram is commutative up to digital homotopy for each $i \in \{1,\cdots,n\}$, where $\omega_{1} : B \simeq_{\lambda_{1},\lambda^{'}_{1}} B^{'}$ and $\omega_{2} : A^{'} \simeq_{\kappa^{'}_{1},\kappa_{1}} A$ are digital homotopy equivalences.
	$$\xymatrix{
		A \ar[r]^{h_{i}} &
		B \ar[d]^{\omega_{1}} \\
		A^{'} \ar[u]^{\omega_{2}} \ar[r]_{k_{i}} & B^{'},}$$
	i.e., the property $\omega_{1} \circ h_{i} \circ \omega_{2} \simeq_{\kappa^{'}_{1},\lambda^{'}_{1}} k_{i}$ holds for each $i \in \{1,\cdots,n\}$. Then D$_{m}(h_{1},\cdots,h_{n}) =$ D$_{m}(k_{1},\cdots,k_{n})$.
\end{theorem}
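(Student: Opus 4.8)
The plan is to reduce the claim to the three monotonicity facts already established for $\text{D}_m$: invariance under replacing each $h_i$ by a digitally homotopic map (the homotopy-invariance item in the bullet list following Definition~\ref{def4}), monotonicity under post-composition by one common map, and monotonicity under pre-composition by one common map. The latter two are the higher analogues of Proposition~\ref{prop1}; in fact they follow from the proposition stated just after Definition~\ref{def4} on $\alpha_i \simeq \alpha_{i+1}$ and $\beta_i \simeq \beta_{i+1}$ by taking all the $\alpha_i$ (respectively all the $\beta_i$) to be one fixed map, so the hypotheses $\alpha_i \simeq \alpha_{i+1}$ and $\beta_i \simeq \beta_{i+1}$ hold trivially. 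Since $\omega_1$ and $\omega_2$ are digital homotopy equivalences, first I would fix homotopy inverses $\omega_1' : B' \to B$ with $\omega_1' \circ \omega_1 \simeq_{\lambda_1,\lambda_1} 1_B$ and $\omega_1 \circ \omega_1' \simeq_{\lambda_1',\lambda_1'} 1_{B'}$, and $\omega_2' : A \to A'$ with $\omega_2 \circ \omega_2' \simeq_{\kappa_1,\kappa_1} 1_A$ and $\omega_2' \circ \omega_2 \simeq_{\kappa_1',\kappa_1'} 1_{A'}$.

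Next I would prove $\text{D}_m(k_1,\dots,k_n) \le \text{D}_m(h_1,\dots,h_n)$. By hypothesis $\omega_1 \circ h_i \circ \omega_2 \simeq_{\kappa_1',\lambda_1'} k_i$ for every $i$, so the homotopy-invariance item gives $\text{D}_m(k_1,\dots,k_n) = \text{D}_m(\omega_1 \circ h_1 \circ \omega_2, \dots, \omega_1 \circ h_n \circ \omega_2)$. Applying post-composition monotonicity with the common map $\omega_1$ and then pre-composition monotonicity with the common map $\omega_2$,
\begin{eqnarray*}
	\text{D}_m(\omega_1 \circ h_1 \circ \omega_2, \dots, \omega_1 \circ h_n \circ \omega_2) \le \text{D}_m(h_1 \circ \omega_2, \dots, h_n \circ \omega_2) \le \text{D}_m(h_1, \dots, h_n),
\end{eqnarray*}
which yields the desired inequality.

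For the reverse inequality I would run the same argument with the two families interchanged, using the homotopy inverses. Post-composing the homotopy $\omega_1 \circ h_i \circ \omega_2 \simeq_{\kappa_1',\lambda_1'} k_i$ with $\omega_1'$ and pre-composing with $\omega_2'$, and using that composition of digitally continuous maps preserves digital homotopy, one obtains $\omega_1' \circ k_i \circ \omega_2' \simeq \omega_1' \circ \omega_1 \circ h_i \circ \omega_2 \circ \omega_2' \simeq (\omega_1' \circ \omega_1) \circ h_i \circ (\omega_2 \circ \omega_2') \simeq h_i$, i.e.\ $h_i \simeq_{\kappa_1,\lambda_1} \omega_1' \circ k_i \circ \omega_2'$ for every $i$. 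Exactly as above this gives $\text{D}_m(h_1,\dots,h_n) = \text{D}_m(\omega_1' \circ k_1 \circ \omega_2', \dots, \omega_1' \circ k_n \circ \omega_2') \le \text{D}_m(k_1,\dots,k_n)$, and combining the two inequalities finishes the proof.

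The only delicate point is the chain of homotopies in the last paragraph: one must verify that each intermediate composite $\omega_1' \circ (\omega_1 \circ h_i \circ \omega_2) \circ \omega_2'$, $(\omega_1' \circ \omega_1) \circ h_i \circ (\omega_2 \circ \omega_2')$, etc., is a digitally continuous map defined on the correct digital image with the correct adjacency, so that the ``composition respects digital homotopy'' principle and transitivity of $\simeq$ apply at each stage. Everything else is a direct invocation of results already proven, so there is no substantial obstacle beyond this bookkeeping.
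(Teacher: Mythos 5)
Your proposal is correct, but it follows a genuinely different route from the paper's own proof. The paper argues directly from Definition~\ref{def4}: starting from a covering $A = X_0 \cup \cdots \cup X_q$ witnessing $\mathrm{D}_m(h_1,\dots,h_n) = q$, it pulls the covering back to $A'$ via $Y_j = \omega_2^{-1}(X_j)$ and verifies the defining condition for the $k_i$ by chaining homotopies $k_l \circ \phi' \simeq \omega_1 \circ h_l \circ \omega_2 \circ \phi' \simeq \omega_1 \circ h_{l+1} \circ \omega_2 \circ \phi' \simeq k_{l+1} \circ \phi'$ using $\phi = \omega_2 \circ \phi'$, and then only asserts that the reverse inequality follows ``the same way.'' You instead reduce the whole statement to three facts already on record: homotopy invariance of $\mathrm{D}_m$ in each argument, and the post- and pre-composition monotonicity obtained from the proposition following Definition~\ref{def4} by taking all the $\alpha_i$ (resp.\ all the $\beta_i$) equal to a single map. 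Your treatment of the converse is the more careful one: you explicitly manufacture the relation $h_i \simeq \omega_1' \circ k_i \circ \omega_2'$ from chosen homotopy inverses and rerun the argument, which makes visible that the forward inequality uses only $k_i \simeq \omega_1 \circ h_i \circ \omega_2$ (no invertibility needed), while the hypothesis that $\omega_1,\omega_2$ are homotopy equivalences is what lets the roles be reversed. What the paper's direct computation buys is an explicit covering of $A'$; what yours buys is modularity and a cleaner, fully spelled-out second inequality. The bookkeeping point you flag --- that composition of digitally continuous maps preserves digital homotopy, so the chain $\omega_1' \circ \omega_1 \circ h_i \circ \omega_2 \circ \omega_2' \simeq h_i$ is legitimate --- is standard and is also used implicitly in the paper's chain of homotopies, so it does not constitute a gap.
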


\begin{proof}
	First, we shall show that D$_{m}(h_{1},\cdots,h_{n}) = q$ implies D$_{m}(k_{1},\cdots,k_{n}) \leq q$. Let $A = X_{0} \cup \cdots \cup X_{q}$, and for each $X_{j}$, $j = 0,\cdots,q$, any $(\delta,\kappa_{1})-$continuous map $\phi : P \rightarrow X_{j}$ admits the property that $h_{l} \circ \phi \simeq_{\delta,\lambda_{1}} h_{l+1} \circ \phi$ for each $l \in \{1,\cdots,n-1\}$. Define $Y_{i} = \omega_{2}^{-1}(X_{i}) \subset A^{'}$ for each $i$. Then $A^{'}$ can be written as the union of $\{Y_{0},\cdots,Y_{q}\}$. Moreover, for each $Y_{j}$, $j = 0,\cdots,q$, any $(\delta,\kappa^{'}_{1})-$continuous map $\phi^{'} : P \rightarrow Y_{j}$ admits the property
	\begin{eqnarray*}
		k_{l} \circ \phi^{'} &\simeq_{\delta,\lambda^{'}_{1}}& k_{l} \circ \omega_{2}^{-1} \circ \phi  \simeq_{\delta,\lambda^{'}_{1}} \omega_{1} \circ h_{l} \circ \omega_{2} \circ \omega_{2}^{-1} \circ \phi \simeq_{\delta,\lambda^{'}_{1}} \omega_{1} \circ h_{l} \circ \phi\\ 
		&\simeq_{\delta,\lambda^{'}_{1}}& \omega_{1} \circ h_{l+1} \circ \phi \simeq_{\delta,\lambda^{'}_{1}} \omega_{1} \circ h_{l+1} \circ \omega_{2} \circ \omega_{2}^{-1} \circ \phi \simeq_{\delta,\lambda^{'}_{1}} k_{l+1} \circ \omega_{2}^{-1} \circ \phi\\ &\simeq_{\delta,\lambda^{'}_{1}}& k_{l+1} \circ \phi^{'}
	\end{eqnarray*}
    by considering the equality $\phi^{'} = \omega_{2}^{-1} \circ \phi$.
	Therefore, D$_{m}(k_{1},\cdots,k_{n}) \leq q$. In addition, the same way can be used to show that D$_{m}(k_{1},\cdots,k_{n}) = q$ implies D$_{m}(h_{1},\cdots,h_{n}) \leq q$ by using the equality $\phi = \omega_{2} \circ \phi^{'}$.
\end{proof}

\begin{corollary}
	Each of cat$_{m}(A,\kappa_{1})$, cat$_{m}(h;\kappa_{1},\lambda_{1})$, TC$^{m}(A,\kappa_{1})$, TC$^{m}(h;\kappa_{1},\lambda_{1})$, and $n-$TC$^{m}(A,\kappa_{1})$ is digitally homotopy invariant.
\end{corollary}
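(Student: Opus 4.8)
The plan is to reduce each of the five quantities to a (higher) digital $m$-homotopic distance between canonical maps and then apply the preceding theorem. Recall that cat$_{m}(A,\kappa_{1}) = $ D$_{m}(1_{A},c)$ by Theorem \ref{thm1}, that cat$_{m}(h;\kappa_{1},\lambda_{1}) = $ D$_{m}(h,c)$, that TC$^{m}(A,\kappa_{1}) = $ D$_{m}(pr_{1},pr_{2})$ by Theorem \ref{thm2}, that TC$^{m}(h;\kappa_{1},\lambda_{1}) = $ D$_{m}(h \circ pr_{1},h \circ pr_{2})$ by Theorem \ref{thm4}, and that $n-$TC$^{m}(A,\kappa_{1}) = $ D$_{m}(pr_{1},\cdots,pr_{n})$ by definition. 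Since the preceding theorem asserts precisely that such higher distances are unchanged when the maps are conjugated, up to digital homotopy, by digital homotopy equivalences, it suffices in each case to exhibit the conjugating equivalences $\omega_{1},\omega_{2}$.

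First I would record the auxiliary observation that a digital homotopy equivalence $f : (A,\kappa_{1}) \to (A^{'},\kappa^{'}_{1})$ with homotopy inverse $g$ induces a digital homotopy equivalence $f^{\times n} : (A^{n},\text{NP}_{n}(\kappa_{1},\cdots,\kappa_{1})) \to ((A^{'})^{n},\text{NP}_{n}(\kappa^{'}_{1},\cdots,\kappa^{'}_{1}))$ with inverse $g^{\times n}$: if $H : A \times [0,z]_{\mathbb{Z}} \to A$ is a digital homotopy from $g \circ f$ to $1_{A}$, then $(a_{1},\cdots,a_{n},t) \mapsto (H(a_{1},t),\cdots,H(a_{n},t))$ is a digital homotopy from $g^{\times n} \circ f^{\times n}$ to $1_{A^{n}}$ for the normal product adjacency, and symmetrically on the other side. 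One also uses the trivial identity $pr_{i} \circ g^{\times n} = g \circ pr_{i}$ for each projection, together with the fact that, for a $\kappa^{'}_{1}-$connected image, any map of the form $f \circ c \circ g$ is a constant map and hence digitally homotopic to the chosen constant $c^{'}$.

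Now, for each invariant I would feed the corresponding distance to the preceding theorem with $\omega_{1},\omega_{2}$ built from $f$ and $g$. For cat$_{m}(A,\kappa_{1})$, take $\omega_{1} = f$ and $\omega_{2} = g$; then $\omega_{1} \circ 1_{A} \circ \omega_{2} = f \circ g \simeq 1_{A^{'}}$ and $\omega_{1} \circ c \circ \omega_{2} \simeq c^{'}$, so the theorem gives D$_{m}(1_{A},c) = $ D$_{m}(1_{A^{'}},c^{'})$. For cat$_{m}(h;\kappa_{1},\lambda_{1})$, let $\omega_{1}$ and $\omega_{2}$ be the given equivalences on the range and on the domain and repeat with the pairs $(h,c)$ and $(k,c^{'})$. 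For TC$^{m}(A,\kappa_{1})$ and for $n-$TC$^{m}(A,\kappa_{1})$, take $\omega_{1} = f$ and $\omega_{2} = g^{\times 2}$, respectively $\omega_{2} = g^{\times n}$; then $\omega_{1} \circ pr_{i} \circ \omega_{2} = f \circ g \circ pr_{i} \simeq pr_{i}$ for each $i$, so the theorem yields equality of the relevant higher distances. For TC$^{m}(h;\kappa_{1},\lambda_{1})$, with $\theta$ the domain equivalence and $\omega_{1}$ the range equivalence, take $\omega_{2} = \theta^{\times 2}$ and use $\omega_{1} \circ (h \circ pr_{i}) \circ \theta^{\times 2} = \omega_{1} \circ h \circ \theta \circ pr_{i} \simeq k \circ pr_{i}$. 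In every case the preceding theorem delivers the desired equality of distances, hence of the invariants.

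The only genuine content beyond bookkeeping is the auxiliary fact that products of digital homotopy equivalences remain equivalences with respect to the normal product adjacency; this is where I expect to spend the bulk of the verification, though it is routine. A secondary point worth pinning down is the precise reading of ``digitally homotopy invariant'' for the relative quantities cat$_{m}(h;-)$ and TC$^{m}(h;-)$: namely, a fibration $h$ and a map $k$ fitting into a square $\omega_{1} \circ h \circ \omega_{2} \simeq k$ with $\omega_{1},\omega_{2}$ digital homotopy equivalences have equal invariants. Once this is made explicit, the argument above applies verbatim.
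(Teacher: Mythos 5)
Your proposal is correct and follows exactly the route the paper intends: the corollary is stated as an immediate consequence of the preceding theorem on D$_{m}$ under conjugation by digital homotopy equivalences, applied to the characterizations of each invariant as a (higher) $m$-homotopic distance. The paper offers no written proof, and your version supplies precisely the expected details (the product-equivalence lemma for the normal product adjacency and the choice of $\omega_{1},\omega_{2}$ in each case), so there is nothing to add.
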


\quad We define a digital fiber $m-$homotopy equivalence between two functions as follows: Let $h : (A,\kappa_{1}) \rightarrow (A^{'},\lambda_{1})$ and $k : (B,\kappa^{'}_{1}) \rightarrow (A^{'},\lambda_{1})$ be two digitally continuous functions. Then they are digitally fiber $m-$homotopy equivalent provided that the following diagram commutes with the properties $v \circ w \circ \phi \simeq_{\delta,\kappa^{'}_{1}} \phi$ and $w \circ v \circ \phi^{'} \simeq_{\delta^{'},\kappa_{1}} \phi^{'}$ for any digitally continuous maps $\phi : (P,\delta) \rightarrow (B,\kappa^{'}_{1})$ and $\phi^{'} : (P^{'},\delta^{'}) \rightarrow (A,\kappa_{1})$, where $P$ and $P^{'}$ are any digital $m-$dimensional complexes. 
$$\xymatrix{
	A \ar[dr]_{h} \ar@<1ex>[rr]^v
	& & B \ar@<1ex>[ll]^w \ar[dl]^{k} \\
	& A^{'}. & }$$

Note that the digital fiber homotopy equivalence implies that the digital fiber $m-$homotopy equivalence. Indeed, $v \circ w \circ \phi \simeq_{\delta,\kappa^{'}_{1}} \phi$ and $w \circ v \circ \phi^{'} \simeq_{\delta^{'},\kappa_{1}} \phi^{'}$ are weaker conditions than $v \circ w \simeq_{\kappa^{'}_{1},\kappa^{'}_{1}} 1_{B}$ and $w \circ v \simeq_{\kappa_{1},\kappa_{1}} 1_{A}$, respectively. Thus, we get the following consequence by considering the digital setting of [Corollary 3.14,\cite{MelihKaraca:2022}]:

\begin{proposition}
	$n-$TC$^{m}(A,\kappa_{1})$ is digitally fiber $m-$homotopy invariant.
\end{proposition}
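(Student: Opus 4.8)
The plan is to mimic the classical argument showing that fiber homotopy equivalences preserve topological-complexity-type invariants, adapted to the weaker $m$-homotopy setting. Suppose $h : (A,\kappa_1) \to (A',\lambda_1)$ and $k : (B,\kappa'_1) \to (A',\lambda_1)$ are digitally fiber $m$-homotopy equivalent via $v : A \to B$ and $w : B \to A$ with $k \circ v = h$, $h \circ w = k$ (commutativity of the triangle), and the weakened identities $v \circ w \circ \phi \simeq_{\delta,\kappa'_1} \phi$, $w \circ v \circ \phi' \simeq_{\delta',\kappa_1} \phi'$ for all digitally continuous $\phi : P \to B$, $\phi' : P' \to A$ out of $m$-dimensional complexes. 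I want to show $n\text{-TC}^m(A,\kappa_1) = n\text{-TC}^m(B,\kappa'_1)$; by symmetry it suffices to prove one inequality, say $n\text{-TC}^m(A,\kappa_1) \le n\text{-TC}^m(B,\kappa'_1)$, and then swap the roles of $A$ and $B$.

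First I would rewrite both sides using the identity $n\text{-TC}^m(X,\mu) = \mathrm{D}_m(pr_1,\dots,pr_n)$, where the projections are $X^n \to X$. So I need $\mathrm{D}_m(pr_1^A,\dots,pr_n^A) \le \mathrm{D}_m(pr_1^B,\dots,pr_n^B)$. The natural maps to use are $V := v^{\times n} : A^n \to B^n$ (product of $v$ with itself $n$ times, which is digitally $(\mathrm{NP}(\kappa_1,\dots,\kappa_1),\mathrm{NP}(\kappa'_1,\dots,\kappa'_1))$-continuous) and similarly $W := w^{\times n} : B^n \to A^n$. The key algebraic facts are $pr_i^B \circ V = v \circ pr_i^A$ and $pr_i^A \circ W = w \circ pr_i^B$, and that $W \circ V$ is, coordinatewise, $w \circ v$ applied in each slot.

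The main step: set $q = \mathrm{D}_m(pr_1^B,\dots,pr_n^B) = n\text{-TC}^m(B,\kappa'_1)$, so $B^n = X_0 \cup \cdots \cup X_q$ with the defining homotopy property on each piece. Define $X'_j := V^{-1}(X_j) \subseteq A^n$; these cover $A^n$. Given any $(\delta, \mathrm{NP}_n(\kappa_1,\dots,\kappa_1))$-continuous $\psi : P \to X'_j$, the composite $V \circ \psi : P \to X_j$ is admissible for the defining property of $\mathrm{D}_m$ on $X_j$, so $pr_l^B \circ V \circ \psi \simeq_{\delta,\lambda'_1} pr_{l+1}^B \circ V \circ \psi$ for all $l$, i.e. $v \circ pr_l^A \circ \psi \simeq v \circ pr_{l+1}^A \circ \psi$. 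Now I compose on the left with $w$ and use $w \circ v \circ \phi' \simeq_{\delta,\kappa_1} \phi'$ applied to $\phi' := pr_l^A \circ \psi$ (which does map out of the $m$-complex $P$): this yields $pr_l^A \circ \psi \simeq_{\delta,\kappa_1} w \circ v \circ pr_l^A \circ \psi \simeq w \circ v \circ pr_{l+1}^A \circ \psi \simeq pr_{l+1}^A \circ \psi$, chaining the homotopies. Hence the $X'_j$ witness $\mathrm{D}_m(pr_1^A,\dots,pr_n^A) \le q$, as desired; reversing $A \leftrightarrow B$, $v \leftrightarrow w$ gives the reverse inequality and thus equality. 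Finally I would remark that this is exactly the digital $m$-analogue of [Corollary 3.14, \cite{MelihKaraca:2022}].

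The step I expect to be the main obstacle is the careful bookkeeping in the chain of homotopies: one must verify that the maps to which the weakened fiber identities $v \circ w \circ \phi \simeq \phi$ and $w \circ v \circ \phi' \simeq \phi'$ are applied genuinely have an $m$-dimensional complex as domain (here $P$ itself, so this is fine), and that transitivity of digital homotopy $\simeq_{\delta,\kappa_1}$ legitimately concatenates the three segments — which it does, since digital homotopies compose by concatenating step intervals. A secondary subtlety is confirming that $V = v^{\times n}$ is continuous for the normal product adjacencies and that $pr^B_i \circ V = v \circ pr^A_i$ holds on the nose; both are immediate from the definition of $\mathrm{NP}_n$ but should be stated. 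No deep new idea is needed beyond what Proposition \ref{prop1}-style pullback arguments already supply.
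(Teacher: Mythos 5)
Your argument is correct and complete, but it takes a genuinely different route from the paper: the paper gives no direct proof at all, instead remarking that a digital fiber homotopy equivalence is in particular a digital fiber $m-$homotopy equivalence (the $m-$conditions being weaker) and then appealing to ``the digital setting of'' [Corollary 3.14, \cite{MelihKaraca:2022}]. That citation-based route is logically delicate, since invariance under the \emph{weaker} equivalence relation is a \emph{stronger} assertion than invariance under ordinary fiber homotopy equivalence, so the cited corollary does not formally yield the proposition; your direct verification --- pulling back the cover via $X^{'}_{j} = V^{-1}(X_{j})$ with $V = v^{\times n}$, using $pr_{l}^{B} \circ V = v \circ pr_{l}^{A}$, and chaining $pr_{l}^{A}\circ\psi \simeq w\circ v\circ pr_{l}^{A}\circ\psi \simeq w\circ v\circ pr_{l+1}^{A}\circ\psi \simeq pr_{l+1}^{A}\circ\psi$ --- supplies exactly the argument the paper omits, and it correctly observes that the weakened fiber identities only need to be tested against maps out of $m-$dimensional complexes, which $pr_{l}^{A}\circ\psi$ is. Two remarks. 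First, your proof never uses the commutativity of the triangle over $A^{'}$; this is consistent with the statement as written, since $n-$TC$^{m}(A,\kappa_{1})$ depends only on the total space and your hypothesis already makes $v,w$ mutually inverse $m-$homotopy equivalences between $A$ and $B$. Second, the surrounding text and the cited corollary suggest the intended statement may really concern $n-$TC$^{m}(h;\kappa_{1},\lambda_{1})$ rather than $n-$TC$^{m}(A,\kappa_{1})$; in that case the argument would have to be rerun on $A \times A^{'}$ using the fibration structure, so you should state explicitly which version you are proving. As written, what you prove is the literal statement, and the proof is sound.
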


\section{Future Works}
\label{conc:5}
 
\quad Just as the concept of homotopy has important applications as homotopic distance, the concept of $m-$homotopy also has important applications in the digital images as digital $m-$homotopy. However,  there are still several open problems that need more investigation and study. One of the main difficulties is to extend $m$-homotopic distance to more complicated structures, such as higher-dimensional digital manifolds and networks, and to build efficient algorithms for computing these distances in large-scale, high-dimensional digital environments. Moreover, including $m$-homotopic distance in machine learning frameworks raises concerns about how best to use topological data to improve model interpretability and performance. The theoretical foundations of $m$-homotopic distance in digital topology also require further investigation to understand its properties and relationships with other topological invariants. Exploring these works can greatly enhance the practical applications of $m$-homotopic distance in numerous engineering and scientific fields.

\end{document}